\title{Reaching the minimum ideal in a finite semigroup}
\author{Nasim Karimi \footnote{Universidade Federal de Alagoas, Campus A. C. Simões - Av. Lourival Melo Mota, s/n, Cidade Universit\'aria, Macei\'o, Alagoas, 57072-900, Brasil
	Tel:  +55 (82) 3214-1418, 
Fax: 	+55 (82) 3214-1418 }\\
nakareme@gmail.com
}
\date{\today}
\newtheorem{theorem}{\bf Theorem}[section]
\newtheorem{conj}[theorem]{\bf Conjecture}
\newtheorem{prop}[theorem]{\bf Proposition}
\newtheorem{question}[theorem]{\bf Question}
\newtheorem{lem}[theorem]{\bf Lemma}
\newtheorem{defi}[theorem]{\bf Definition}
\newtheorem{rem}[theorem]{\bf Remark}
\newtheorem{cor}[theorem]{\bf Corollary}
\newtheorem{ex}[theorem]{\bf Example}
\def\rank{\ensuremath{\mathrm{rank}}}
\def\Im{\ensuremath{\mathrm{Im}}}
\def\diam{\ensuremath{\mathrm{diam}}}
\def\ord{\ensuremath{\mathrm{ord}}}
\def\dom{\ensuremath{\mathrm{Dom}}}
\def\deout{\ensuremath{\mathrm{d^{out}}}}
\def\dein{\ensuremath{\mathrm{d^{in}}}}
\newtheorem{notation}[theorem]{\bf Notation}
\begin{document}
\maketitle
\begin{abstract}
We introduce the depth parameters of a finite semigroup, which measure how hard it is to produce an element in the minimum ideal when we consider generating sets satisfying some minimality conditions. We estimate such parameters for some families of finite semigroups, and we obtain an upper bound for wreath products and direct products of two finite (transformation) monoids.

Keywords: semigroup, generating set, minimum ideal, $A$-depth of a semigroup
 \end{abstract}

\section{Introduction}
Consider a finite semigroup $S$ with a generating set $A$. Every element in $S$ can be represented as a product of generators in $A$. By the length  of an element $s$ in $S$, with respect to $A$, we mean the minimum length of a sequence which represents $s$ in terms of generators in $A$. In finite semigroup (group) theory, several parameters may be
defined involving the length
of elements in terms of a
generating set. In this work we are interested in the minimum length of elements in the minimum ideal (kernel) of a finite semigroup. 
We denote this parameter  by $N(S,A)$, where $A$ is a generating set of the finite semigroup $S$, and we call it $A$-depth of $S$. We define the following  parameters, called depth parameters, which 
 depend only on the semigroup $S$,
$$N(S)=\min \{ N(S,A): S=\langle A \rangle, ~\rank(S)=|A|  \},$$

$$M(S)=\max \{ N(S,A): S=\langle A \rangle, ~\rank(S)=|A| \}$$

and
$$N'(S)=\min \{ N(S,A): \mbox{$A$ is a minimal generating set}\},$$
$$M'(S)=\max \{ N(S,A):  \mbox{$A$ is a minimal generating set} \}.$$
Note that the minimum over all generating sets is zero in case of a group and is one otherwise, so it is of no interest. 

Part of our motivation to estimate such kind of parameters comes from  a famous conjecture in automata theory attributed to \v Cern\'y, a Slovak mathematician. In 1964, \v Cern\'y conjectured that any $n$-state synchronizing automaton has a reset word of length at most $(n-1)^2$ \cite{Cerny:1964}.  In fact, the transition semigroup of any finite automaton is a finite transformation semigroup. A reset word in a synchronizing automaton is a constant transformation, which belongs to the minimum ideal of the transition semigroup. Hence the length of a reset word in a synchronizing automaton is equal to the length of an element in the minimum ideal of the transition semigroup, with respect to a generating set. Also, there is a generalization of \v Cern\' y's conjecture, known as the
\v Cern\' y-Pin conjecture, which gives the upper bound $(n - r)^2$ for the length of a word
of rank $r$ in an automaton with $n$ states in which the minimum rank of
words is $r$. This version of the conjecture is a
reformulation of the
stronger conjecture in
\cite{Pin:1978}, which was disproved
in \cite{Kari:2001}. Here the automaton is not necessarily synchronizing  but the words of minimum rank $r$ represent elements in the minimum ideal of the transition semigroup. 

 We are also interested in investigating how the parameter $N(S,A)$ behaves with respect to the wreath product. In fact, the prime decomposition theorem states that any finite semigroup $S$ is a divisor of an iterated wreath product of its simple group divisors and the three-element monoid $U_2$ consisting of two right zeros and one identity element \cite{Rhodes&Steinberg:2009}. So, it should be interesting to be able to say something about $N(S,A)$ provided that $S$ is a wreath product of two finite transformation semigroups.

 In Section \ref{se:depth} we estimate the depth parameters for some families of finite semigroups. More precisely, we establish that the depth parameters are 
 equal, considerably small and easily calculable for any finite $0$-simple semigroup. We show that semilattices have a unique minimal generating set. So, the depth parameters 
 for semilattices are equal and again easily calculable. The third family of semigroups which we have considered is that of completely regular semigroups. For them the problem is reduced to the semilattice case.
 Afterward, we deal with transformation semigroups. We present in Theorem \ref{lower-bound-N'} a lower bound for $N'(S)$, where $S$ is any finite transformation semigroup, and we show that 
 it is sharp for several families of such semigroups. Applying this lower bound helps us to estimate the depth parameters for the transformation semigroups $PT_n$, $T_n$ and $I_n$; their ideals $K'(n,r)$, $K(n,r)$ and $L(n,r)$; and 
 the semigroups of order preserving transformations $PO_n$, $O_n$ and $POI_n$. The main theorem in that section is Theorem \ref{lower-bound-N'} which is proved by two 
 easy lemmas based on simple facts about construction of finite semigroups. Moreover, we use several results concerning the generating sets of minimum size of finite transformation semigroups (see for example 
 \cite{Fernandes:2001,Garba:1990,Gomes&Howie:1987,Gomes&Howie:1992,Howie&McFadden:1990,Howie:1966}).

  In Section \ref{se:product} we are interested in the behavior of the parameter $N(S)$ with respect to the wreath product and the direct product. For instance,
 we establish some lemmas to present a generating set of minimum size for the direct product (wreath product) of two finite monoids (transformation monoids). We compute the rank of the products (direct product or wreath product)
 in terms of their components.\footnote{The interested reader may find related results in \cite{Ruskuc&Robertson&Thomson:2003}.} Applying those results we give an upper bound for $N(S)$ where
 $S$ is a wreath product or direct product of two finite transformation monoids. 

\section{Preliminaries}\label{se:pre}
In this section we present the notation and definitions which we use in the sequel. For standard terms in semigroup theory see \cite{Pin:1986}.
\subsection{Depth parameters }
In this work we are only interested in non-empty finite semigroups.  
We note  that every finite semigroup has a minimum ideal which we call the \emph{kernel} of $S$ and  denote by $\ker(S)$. A non-empty subset $A \subseteq S$ is a \emph{generating set}, if every element in $S$ can be represented as a product of elements (generators) in $A$. We use the notation $S=\langle A \rangle$ when $A$ is a generating set of $S$. A generating set $A$ is called \emph{minimal} if no proper subset of $A$ is a generating set of $S$. 
By the \emph{rank} of a semigroup $S$, denoted by $\rank(S)$, we mean the cardinality of any of the smallest generating sets of $S$.\footnote{ When $S$ is a non trivial finite group, our notion of (semigroup) rank coincides with the notion of rank used in group theory (which allows the use of inverses) since the inverse of an element $a$ equals necessarily some power of $a$.  The rank of the trivial group is one by convention.}

 We suppose that the reader is familiar with the Green relations in the classical theory of finite semigroups. For a convenient reference see \cite{Pin:1986}. 
\begin{rem}
We use the fact that $\mathcal{J}=\mathcal{D}$ for a finite semigroup (the equality may fail for an infinite semigroup) several times in our proofs without mentioning it explicitly.
\end{rem}

\begin{defi}\label{length}
 Let $S$ be a finite semigroup with a generating set $A$. For every non identity element $s \in S $, the \emph{length} of $s$ with respect to $A$, denoted by $l_A(s)$, is defined to be 
 $$l_A(s):=\min\{k : s=a_1 a_2 \cdots a_k, ~ \mbox{for some} ~ a_1, a_2, \ldots, a_k \in A \},$$
 and the length of the identity (if there is any) is zero by convention. 
Furthermore, for any non empty subset $T$ of $S$, 
the maximum (minimum) length of $T$ with respect to $A$, denoted by $Ml_A(T)$ ($ml_A(T)$), is the maximum (minimum) length of elements, with respect to $A$, in $T$.
\end{defi}
 
\begin{defi}
Let $S$ be a finite semigroup with a generating set $A$. By the \emph{$A$-depth} of $S$ we mean the number
\begin{align*}
N(S,A):=ml_A(\ker(S)).
\end{align*}
\end{defi}
We may consider  the following parameters, defined in terms of the notion of $A$-depth, but which depend only on $S$: 
\begin{defi}
Let $S$ be a finite semigroup. Define
\begin{align*}
N(S)&:=\min \{ N(S,A) : S=\langle A \rangle , |A|=\rank(S) \},\\
N'(S)&:= 
\min\{ N(S,A) :~  A \mbox{ is a minimal generating set} \}\\ 
M(S)&:=\max \{ N(S,A) : S=\langle A \rangle , |A|=\rank(S) \},\\
M'(S)&:=\max \{ N(S,A) :~A \mbox{ is a minimal generating set} \}.
\end{align*}
\end{defi}
These are henceforth called the
\emph{depth parameters} of $S$.

\begin{ex}
If $G$ is a group then $N(G,A)=0$, for every generating set $A$ of $G$. Hence, all the depth parameters of $G$ are equal to zero.
\end{ex}

\begin{rem}
Note that the minimum $A$-depth over all generating sets of a finite semigroup which is not a group is one. 
\end{rem}
\begin{rem}
If $A \subseteq B$, then $N(S,B) \leq N(S,A)$. Hence, we have

$$M'(S)= 
\max \{ N(S,A) :~  A \mbox{ is a generating set} \}.$$
\end{rem}
\begin{rem}
It is easy to see that
$$N'(S) \leq N(S) \leq M(S) \leq M'(S).$$
\end{rem}
\begin{notation}
 Let $i \geq 1,~n \geq 1$  and  $C_{i,n} :=\langle a : a^i= a^{i+n}\rangle $ be the \emph{monogenic semigroup} with index $i$ and period $n $. 
\end{notation}
\begin{ex}
 For $i > 1$ we have $N(C_{i,n}, A)=i$ for every minimal generating set $A$ of $C_{i,n}$. Hence all the depth parameters are equal for all finite monogenic semigroups with index $i>1$.
\end{ex}
 \subsection{Semilattices}
A \emph{semilattice} is a semigroup $(S,.)$ such that, for any $x,y \in S$, $x^2=x$ and $xy=yx$. Given a semilattice $(S,\cdot)$ and $x,y \in S$, we define $x \leq y$ if $x = xy$. It is easy to see that $(S,\leq)$ is a partially ordered set that has a meet (a greatest lower bound) for any nonempty finite subset, indeed $x \wedge y = xy$  \cite{Almeida:1994}.

%

\begin{ex}
  Let $X$ be a set. The set $P(X)$ (set of subsets of $X$) with the binary operation of union is a semigroup. Since this semigroup is a free object in the variety of semilattices we call it the free semilattice generated by $X$.
\end{ex}

\begin{defi}
  Let $S$ be a semilattice. An element $s\in S$ is \emph{irreducible} 
  if $s=a b~ (a,b\in S)$ implies $a=s \mbox{ or } b=s$. Denote
  by $I(S)$ the set of all irreducible elements of $S$.
\end{defi}

  Let $(S,\leq)$ be a partially ordered set. As usual, let $<$ be the relation on $S$ such that $u < v$ if and only if $u \leq v$ and $u \not = v$. Let $u,v$ be elements of $S$. Then $v$ covers $u$, written $u \prec v$, if $u < v$ and there is no element $w$ such that $u < w < v $. By the \emph{diagram} of $(S,\leq)$ we mean the directed graph with vertex set $S$ such that there is an edge $u \rightarrow v$ between the pair $u,v\in S$ if $u\prec v$.

\begin{notation}
Given a vertex $v$ of a directed graph, the \emph{in-degree} of $v$ 
denoted by $\dein(v)$, is the number of $w$ such that
$(w,v)$ is an edge; the \emph{out-degree} of $v$, denoted by $\deout(v)$, is the number of $w$ such that $(v,w)$ is an edge.
\end{notation}
\begin{rem}
  Consider a finite semilattice $S$. By definition, the set $S$ has
 an infimum, which is the zero of $S$. Notice that in the diagram of
 $S$, the vertex corresponding to zero is the unique vertex which has in-degree zero. 
 \end{rem}

\begin{rem}
  Consider a finite semilattice $S$ with the property that the subset
  $\{x\in S : x\leq s\}$ is a chain for all $s\in S$. Then the diagram
  of $S$ is a rooted tree in which the root represents the
  zero of $S$.
\end{rem}
\subsection{Transformation semigroups}
\begin{notation}
Let $\mathbb{N}$ be the set of all natural numbers. For $n \in \mathbb{N}$ denote by $X_n$ the chain with $n$ elements, say $X_n=\{1,2,\ldots,n\}$ with the usual ordering.
\end{notation}
As usual, we denote by ${PT}_n$ the semigroup of all partial functions of $X_n$ (under composition) and we call the elements of $PT_n$  transformations.
We introduce two formally different (yet equivalent) definitions of a transformation semigroup:
   
\begin{defi}\label{transformation:defi1}
By transformation semigroup, with degree $n$, we mean a subsemigroup of the partial transformation semigroup ${PT}_n$.  
\end{defi}
Let $S$ be a finite semigroup and $X$ be a finite set. The semigroup $S$ faithfully acting on the right of the set $X$ means that there is a map $X \times S \rightarrow X$, written $(x,s) \mapsto xs$, satisfying:
\begin{itemize}
\item $x (s_1 s_2)= (xs_1) s_2$;
\item If for every $x \in X$ $xs_1=xs_2$, then $s_1=s_2$.
\end{itemize}
\begin{defi}\label{transformation:defi2}
 By a transformation semigroup $(X,S)$ we mean a semigroup $S$ faithfully acting on the right of a set $X$. 
\end{defi}

We define the families of transformation semigroups whose $A$-depth is estimated in \ref{su:depth}. 
Define the full transformation semigroup $T_n$ and the symmetric inverse monoid $I_n$ as follows:
\begin{align*}
T_n&:=\{ \alpha \in PT_n : \dom(\alpha)=X_n \},\\ 
I_n&:= \{ \alpha \in PT_n  : \alpha~ \mbox{is an injective transformation} \}.
\end{align*}
We further define certain transformation semigroups which are subsemigroups of $PT_n,T_n$ or $I_n$.
For instance, for $1 \leq r < n$ the following semigroups are ideals of $PT_n, T_n$ and $I_n$, respectively:
 
\begin{align*}
K'(n,r)&:= \{ \alpha \in PT_n : \rank (\alpha) \leq r \},\\ 
K(n,r)&:= \{ \alpha \in T_n : \rank (\alpha) \leq r \},\\
L(n,r)&:= \{ \alpha \in I_n : \rank(\alpha) \leq r \}.
\end{align*}
Also, we can define more transformation semigroups when we impose that the (partial) transformations to be order preserving.  
We say that a transformation $s$ in ${PT}_n$ is \textit{order preserving} if, for all $x,y \in \dom(s)$, $x \leq y$ implies $xs \leq ys$. 
Clearly, the product of two order preserving transformations is an order preserving transformation.

Let
\begin{align*}
PO_n&:= \{ \alpha \in PT_n \setminus \{1\} : \alpha \mbox{ is order preserving} \},\\
O_n&:=\{ \alpha \in T_n \setminus \{1\} : \alpha \mbox{ is order preserving} \},\\ 
POI_n&:= \{ \alpha \in I_n \setminus \{1\} : \alpha \mbox{ is order preserving} \}.\\
\end{align*}

Note that $PO_n$, $O_n$ and $POI_n$ are aperiodic semigroups (i.e., have trivial $\mathcal{H}$-classes). Denote by $J_{n-1}(PO_n), ~J_{n-1}(O_n)$ and $J_{n-1}(POI_n)$  the maximum $\mathcal{J}$-class in $PO_n,~O_n$ and $POI_n$, respectively. The $\mathcal{J}$-classes $J_{n-1}(PO_n)$, $J_{n-1}(O_n)$ and $J_{n-1}(POI_n)$ have $n$ $\mathcal{L}$-classes which consist of (partial) transformations of rank $n-1$ with  the same image. The $\mathcal{J}$-class $J_{n-1}(PO_n)$ has two kinds of $\mathcal{R}$-classes,  $n$ $\mathcal{R}$-classes consisting of proper partial transformations of rank $n-1$ and $n-1$ $\mathcal{R}$-classes consisting of total transformations of rank $n-1$; the $\mathcal{J}$-class $J_{n-1}(O_n)$  has $n-1$ $\mathcal{R}$-classes consisting of transformations of rank $n-1$; and the $\mathcal{J}$-class $J_{n-1}(POI_n)$ has $n$ $\mathcal{R}$-classes consisting of proper partial transformations of rank $n-1$.  

\subsection{Finite automata and $A$-depth of a semigroup}\label{se:finite automata}
We follow in this section the terminology of \cite{Rystsov:1992}.
 
A \emph{finite automaton} is a pair $A=(Q,\Sigma)$, where $Q$ is a finite state set and $\Sigma$ is a finite set of input symbols, each associated with a mapping on the state set $\sigma:Q \longrightarrow Q$ (note that we use the same notation for the symbols in $\Sigma$ and the associated mappings). A sequence of input symbols of the automaton will be called for brevity an \emph{input word}. To every input word $w=\sigma_1\sigma_2 \ldots \sigma_k$ is associated a mapping on the state set, which is a composition of the mappings corresponding to $\sigma_i$, $1 \leq i \leq k$. By the action of an input word we mean the action of the associated mapping. The action of the input word $w$ on the state $q$ is denoted $(q)w$ and the action of the input word $w$ on the subset of states $T$ is denoted $(T)w$.  Denote by $S_A$ the \emph{transition semigroup} of $A$ generated by the associated mappings of input symbols. In fact, $(Q,S_A)$ is the transformation semigroup generated by $\Sigma$.  

\begin{defi}
The \emph{rank} of a finite automaton is  the minimum rank of its input words (the \emph{rank} of a mapping is the cardinality of its image). An input word of minimum rank is called \emph{terminal}. 
\end{defi}

A finite automaton with rank one is called synchronizing and every terminal word in a synchronizing automaton is a \emph{reset word}. It is clear that the minimum ideal of the transition semigroup $S_A$ consists of the terminal words of the automaton $A$. Meanwhile, the parameter $N(S_A,\Sigma)$ is the minimum length of terminal words in the automaton $A=(Q,\Sigma)$. In fact, to compute the number $N(S,A)$, where $S$ is a finite transformation semigroup with a generating set $A$, is equivalent to finding the minimum length of terminal words in a finite automaton with  transition semigroup $S$. The importance of knowing the length of the terminal words in a finite automaton is motivated by the two following conjectures attributed to \v Cern\'y and Pin, respectively.    

\begin{conj}\cite{Cerny:1964} 
Every $n$-state synchronizing automaton has a reset word of length at most $(n-1)^2$.
\end{conj}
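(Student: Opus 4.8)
The statement is the \v Cern\'y conjecture, one of the oldest open problems in automata theory, so what follows is not a proof but an account of the approaches I would try and of the point at which each of them stalls. The most elementary is a \emph{greedy compression} strategy: starting from the full state set $Q$, repeatedly apply a word that sends the current reachable subset to a strictly smaller one, and control the length spent at each step. If one could always shrink an $m$-element subset to a smaller one using a word of length roughly $2m$, then summing over $m=n,n-1,\dots,2$ would already give a bound of order $(n-1)^2$. The obstruction is that no estimate better than $O(n^2)$ is known for the length of a shortest word that compresses a given subset by even a single element; combining the best such per-step bound over the $n-1$ compression steps produces only a cubic bound in $n$ --- essentially the classical $(n^3-n)/6$ of Frankl and Pin, which comes from an extremal set-system inequality --- and the best unconditional upper bound to date is still cubic, with leading constant only slightly below $\tfrac16$.

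The second approach I would try works directly with the transition semigroup, in the spirit of the present paper: a reset word is precisely an element of the kernel of $S_A$, so the quantity to bound is exactly $N(S_A,\Sigma)$. One would like to descend through the $\mathcal{J}$-class structure of $S_A$ from rank $n$ down to rank $1$, estimating at each stage how long a word is needed to move from one $\mathcal{J}$-class into the next lower one; this is the philosophy behind Theorem~\ref{lower-bound-N'} and its companions in Section~\ref{se:depth}. But bounding the length required to pass between consecutive $\mathcal{J}$-classes of a transformation semigroup is, in the worst case, the compression problem again, and no quadratic bound is available in general.

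A third line is linear-algebraic: replace each subset by its characteristic vector, pass to the quotient of $\mathbb{R}^Q$ by the span of the all-ones vector, and show that the images of the reachable subsets cannot be spread over too many affine subspaces. This is how the conjecture has been confirmed for several restricted classes --- circular automata, where some letter acts as an $n$-cycle; one-cluster automata; Eulerian automata; aperiodic automata; and others --- and how the constant in the cubic bound has been repeatedly reduced. The hard part, and the reason the conjecture has resisted attack for more than half a century, is that none of these methods yields a \emph{uniform} quadratic certificate of synchronization: the conjecturally extremal examples, the \v Cern\'y automata $\mathcal{C}_n$, are extremely rigid, yet we have no invariant that simultaneously forces slow progress toward the kernel and is robust enough to apply to every synchronizing automaton. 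I do not expect to resolve this here; the reason for recording the conjecture is to motivate the study of the semigroup parameter $N(S,A)$, for which Theorem~\ref{lower-bound-N'} and the remaining results of Section~\ref{se:depth} supply the partial information that is actually within reach.
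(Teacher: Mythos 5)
You have correctly recognized that this statement is the \v Cern\'y conjecture itself, which the paper does not prove but merely cites from \cite{Cerny:1964} as motivation for introducing the parameter $N(S,A)$; no proof exists in the paper or in the literature, so declining to supply one is the right call. Your survey of the known approaches (greedy compression giving the cubic Frankl--Pin bound, descent through the $\mathcal{J}$-class structure, and the linear-algebraic method behind the confirmed special cases) is accurate and consistent with how the paper positions the conjecture.
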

\begin{conj}\label{pin-conj} 
Every $n$-state automaton of rank $r$ has a terminal word of length at most $(n-r)^2$.
\end{conj}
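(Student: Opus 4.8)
The plan is to approach Conjecture~\ref{pin-conj} by the \emph{extension method}, the technique behind essentially all positive results on the \v Cern\'y and \v Cern\'y--Pin problems. I should say at the outset that the statement as written is the full \v Cern\'y--Pin conjecture: its rank-one case is the \v Cern\'y conjecture, open since 1964, and its general form is refuted by the example of Kari mentioned in the introduction. So what follows is really the blueprint for the sharpest bound one can currently hope to reach by known means, not a complete proof.

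\emph{Set-up.} In the notation of Section~\ref{se:finite automata}, what is asserted is that $N(S_A,\Sigma)\le (n-r)^2$ for an $n$-state automaton $A=(Q,\Sigma)$ whose words have minimum rank $r$. Fix such an $A$ and a word $u_0$ of rank $r$, so that $P_0:=(Q)u_0$ has $|P_0|=r$. The idea is to build a terminal word as a concatenation of \emph{shrinking steps}: starting from $Q$, produce a chain $Q=T_0,T_1,\dots,T_m$ of subsets of $Q$ of strictly decreasing cardinality with $|T_m|=r$, where $T_{i+1}=(T_i)w_i$ for a word $w_i$ whose length is controlled by roughly $n-r$ --- or, more flexibly, with the lengths $|w_i|$ summing to $(n-r)^2$ over the $m\le n-r$ steps. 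A single shrinking step reduces to finding two states of $T_i$ that can be merged by a short word; one searches breadth-first in the power automaton --- dually, in the ``pair automaton'' acting on two-element subsets --- for such a word, and a compression/counting argument bounds how far one must search.

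\emph{Key steps.} (1) Reduce to the case where $A$ is strongly connected and carries no word of rank $<r$, by restricting to a subautomaton determined by the rank-$r$ images reachable from $Q$, so that the orbit of subsets under $\Sigma^{\ast}$ is well behaved. (2) Prove a \emph{one-step lemma}: for every $T$ with $|T|>r$ there is a word $w$ of length at most $n-r$ with $|(T)w|<|T|$; this is the only place the hypothesis that $u_0$ has rank $r$ really enters, forcing some pair of states of $T$ to be pushed together along a path of bounded length. (3) Iterate the one-step lemma at most $n-r$ times and add the lengths. Steps (1) and (3) are essentially bookkeeping; all of the difficulty is concentrated in step~(2).

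\emph{The main obstacle.} It is precisely step~(2): driving the one-step bound down to the linear value $n-r$, rather than the quadratic-in-$(n-r)$ value that a naive argument produces. Choosing \emph{any} mergeable pair and following \emph{any} path in the pair automaton only gives a single-step length of order $(n-r)^2$, hence a total of Frankl--Pin type, cubic in $n-r$ (roughly $\binom{n-r+2}{3}$ after iteration) rather than $(n-r)^2$; shaving this to linear requires choosing the pair to merge and the collapsing word together and globally, and this is exactly where linear-algebraic rank arguments, Eulerian and averaging arguments, and the primitivity (Wielandt-type) circle of ideas all stall. So I expect to be able to carry out (1) and (3) cleanly and to re-derive a cubic bound via a weak form of (2), but not to reach $(n-r)^2$ --- consistent with the conjecture being open for $r=1$ and false in general.
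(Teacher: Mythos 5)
There is nothing in the paper to compare your proposal against: the statement you were given is Conjecture~\ref{pin-conj}, which the paper presents explicitly as an open problem --- its $r=1$ case is \v Cern\'y's conjecture, and the paper records that the stronger form originally proposed by Pin was disproved by Kari, while this reformulation (due to Rystsov) ``is still open.'' So the paper offers no proof, and no complete proof is known. Your diagnosis of the situation is accurate, and your honesty about not being able to close the argument is the correct response here.

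As a survey of the standard line of attack, your sketch is sound: the reduction to a chain of shrinking steps, the concentration of all difficulty in the one-step lemma, and the arithmetic are all right (iterating a quadratic-in-$(n-r)$ one-step bound does give a cubic total of order $\binom{n-r+2}{3}$, which at $r=1$ recovers the Pin--Frankl bound $\binom{n+1}{3}=(n^3-n)/6$). The one caution I would add is that your step~(2), as literally stated --- a uniform bound of $n-r$ on the length of a word strictly compressing an arbitrary $T$ with $|T|>r$ --- is strictly stronger than what is needed and is not known to hold even for $r=1$; the viable versions of the extension method let the per-step lengths vary and only control their sum, and even those are only known to succeed for special classes (Eulerian, one-cluster, aperiodic transition monoids, etc.). None of this is a defect of your write-up relative to the paper, since the paper proves nothing here; it simply confirms that the proposal cannot be accepted as a proof of the statement, nor could any proposal at present.
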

 We mention that Pin generalized the \v Cern\'y conjecture as follows \cite{Pin:1978}. Suppose $A=(Q,\Sigma)$ is an automaton such that some word $w \in \Sigma^*$ acts on $Q$ as a transformation of rank $r$. Then he proposed that there should be a word of length at most $(n-r)^2$ acting as a rank $r$ transformation. This generalized conjecture was disproved by Kari \cite{Kari:2001}. However, the above conjecture is a reformulation of the Pin conjecture that is still open (and that was introduced by Rystsov as being the Pin conjecture \cite {Rystsov:1992}).
\section{Depth parameters of some families of finite semigroups }\label{se:depth}

In this section we estimate the depth parameters for some families of finite semigroups. We start with $0$-simple semigroups. We establish that the depth  parameters are equal, considerably small and easily computable for any finite $0$-simple semigroup. Then we show that semilattices  have a unique minimal generating set. So, the depth parameters are equal and again easily computable. The third family of semigroups which we have considered is that of completely regular semigroups. For them, the problem is reduced to the semilattice case. 

In all of the above examples, we did not represent semigroups as transformation semigroups. On the other hand, representing  the elements of a semigroup as transformations make us able to do some calculations. In the next part of this section we deal with transformation semigroups. We present in Theorem
 \ref{lower-bound-N'} a lower bound for $N'(S)$, where $S$ is any
    finite transformation semigroup, and we show that it is sharp for
    several families of such semigroups. Applying this lower bound helps us to estimate the depth parameters for some families of finite transformation semigroups.
\subsection{Examples}\label{su:example}
The following lemma is an easy observation which we are going to use frequently.
\begin{lem}\label{minimal-generating-set} 
Let $S$ be a finite semigroup and $I$ be an ideal  of $S$. If $I$ is contained in the subsemigroup generated by the set $S \setminus I$, then every minimal generating set of $S$ must be contained in $S \setminus I$.
\end{lem}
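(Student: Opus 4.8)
The plan is to argue by contradiction. Suppose $A$ is a minimal generating set of $S$ but $A \cap I \neq \emptyset$. I will show that then the proper subset $B := A \setminus I$ still generates $S$, contradicting minimality, so that in fact $A \subseteq S \setminus I$.

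The first step is to prove that $S \setminus I \subseteq \langle B \rangle$. The key point is that $I$ is an ideal: if $s \in S \setminus I$ is written as $s = a_1 a_2 \cdots a_k$ with each $a_j \in A$, then none of the $a_j$ can belong to $I$, because a single factor in $I$ would force the whole product into $I$. Hence $s$ is already a product of elements of $A \setminus I = B$, which gives $S \setminus I \subseteq \langle B \rangle$.

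The second step invokes the hypothesis $I \subseteq \langle S \setminus I \rangle$. Combining it with the first step yields
\[
I \subseteq \langle S \setminus I \rangle \subseteq \langle \langle B \rangle \rangle = \langle B \rangle,
\]
so $S = (S \setminus I) \cup I \subseteq \langle B \rangle$, i.e.\ $B$ generates $S$. Since $A \cap I \neq \emptyset$, the set $B$ is a proper subset of $A$, contradicting the minimality of $A$. Therefore no element of a minimal generating set lies in $I$, that is, every minimal generating set of $S$ is contained in $S \setminus I$.

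I do not expect a genuine obstacle here; the argument is elementary. The one place where care is needed — and the only place the ideal hypothesis is actually used — is the ``absorption'' observation in the first step: a product of semigroup elements can lie outside an ideal only if every factor lies outside it. Everything else is bookkeeping with the definitions of generating set and minimality.
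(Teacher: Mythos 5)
Your proof is correct and follows essentially the same route as the paper's: both arguments rest on the ideal absorption property (a product lies outside $I$ only if every factor does) together with the hypothesis $I \subseteq \langle S \setminus I \rangle$ to conclude that $A \setminus I$ already generates $S$. The only difference is presentational — the paper rewrites a single offending generator $a \in A \cap I$ as a product over $A \setminus I$, whereas you show directly that $B = A \setminus I$ generates all of $S$ — but the underlying reasoning is identical.
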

\begin{proof}
Let $A$ be a minimal generating set of $S$. Suppose that $ a \in I \cap A $. Because $I$ is contained in the subsemigroup generated by the set $S \setminus I$, $a$ can be written as a product of elements in $S\setminus I$. Moreover, because $I$ is an ideal  and $A$ is a generating set, every factor of this product can be written as a product of generators in $A \setminus I$. Therefore, $a$ can be written as a product of elements in $A \setminus I$, which contradicts the minimality of $A$. This shows that $A \cap I = \emptyset$. Hence we have $A \subseteq S \setminus I$.       
\end{proof}

A semigroup $S$ is called \emph{$0$-simple} \index{$0$-simple} if it possesses a zero, which is denoted by $0$, if $S^2 \not = 0$, and if, $\{0\}$ and $S$ are the only ideals of $S$ \cite{Pin:1986}. 
 The $0$-simple semigroups are examples of semigroups whose parameters $M,N,M',N'$ are equal, considerably small and easily computable.
\begin{lem}
  If $S$ is a finite  $0$-simple semigroup then $$ N(S)=M(S)=M'(S)=N'(S) \leq 2.$$
\end{lem}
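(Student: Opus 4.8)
The plan is to use the structure theorem for finite $0$-simple semigroups: every such $S$ is isomorphic to a regular Rees matrix semigroup $\mathcal{M}^0[G; I, \Lambda; P]$ over a group $G$ with a sandwich matrix $P = (p_{\lambda i})$ in which every row and every column contains a nonzero entry. In this representation the kernel (minimum ideal) of $S$ is exactly $\{0\}$: indeed $\{0\}$ is an ideal, and since $S$ is $0$-simple it is the only proper ideal, hence it is the minimum one. So $\ker(S) = \{0\}$ and the whole problem reduces to finding the minimum length of a product of generators that equals $0$, for an arbitrary generating set $A$ of minimum size (for $N, M$) or an arbitrary minimal generating set (for $N', M'$).

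First I would observe that since $A$ generates $S$ and $0 \in S$, some product of elements of $A$ equals $0$; this already gives $N(S,A) \geq 1$, and since $S$ is not a group, $N(S,A) \geq 1$ for every generating set. The crux is the upper bound $N(S,A) \leq 2$ for every generating set $A$. Here I would argue: because $S \neq S^2 \cup \{0\}$ is impossible (as $S^2 \neq 0$ and $S$ is $0$-simple, one checks $S = S^2$ is forced, or more carefully that any generating set must contain a nonzero element $a = (i, g, \lambda)$), pick any nonzero generator $a = (i,g,\lambda) \in A$. If already $a = 0$ we are done with length $1$; otherwise consider $a^2 = (i, g, \lambda)(i, g, \lambda) = (i, g\, p_{\lambda i}\, g, \lambda)$ when $p_{\lambda i} \neq 0$, which is again nonzero — so squaring a single generator need not reach $0$. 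Instead I would use two (possibly equal) nonzero generators: since $A$ generates all of $S$, in particular it generates elements $(i', g', \lambda')$ realizing arbitrary coordinates; choosing generators $a = (i, g, \lambda)$ and $b = (j, h, \mu)$ whose product $ab = (i, g\, p_{\lambda j}\, h, \mu)$ — this is $0$ precisely when $p_{\lambda j} = 0$. So the real point is: does $A$ contain two nonzero elements $a, b$ with the "row index of $a$" $\lambda$ and "column index of $b$" $j$ satisfying $p_{\lambda j} = 0$, or failing that, can we reach $0$ in exactly two steps some other way? If $P$ has no zero entries then $S$ is actually completely simple with a zero adjoined is impossible; in fact $P$ having no zero entry forces $\Lambda$ and $I$ to be handled separately, and one shows $0$ is then reached as $a \cdot 0$... but $0$ may not be a generator. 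The clean resolution: since $A$ generates $S$, there is a product $a_1 \cdots a_k = 0$ of minimal length $k = N(S,A)$; I must show $k \leq 2$. Write $a_1 \cdots a_{k-1} = s \neq 0$ by minimality, so $s = (i, g, \lambda)$ and $a_k = (j, h, \mu)$ with $p_{\lambda j} = 0$. But then already $s\, a_k = 0$ with $s$ a product of $k-1$ generators; and I claim $s$ can be replaced by a single generator with row index $\lambda$: since $A$ generates $S$ it must contain some element with row index $\lambda$ (every $\mathcal L$-class index $\lambda$ appears among generators, because the map $s \mapsto \lambda(s)$ determines the $\mathcal R$-class structure and generators must hit every $\mathcal R$-class of the top $\mathcal J$-class $S \setminus \{0\}$). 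Call it $a' = (i', g', \lambda)$; then $a' a_k = (i', g' p_{\lambda j} h, \mu) = 0$, a product of length $2$. This gives $N(S,A) \leq 2$, hence $M'(S) \leq 2$, and combined with $N'(S) \geq 1$ and the chain $N'(S) \leq N(S) \leq M(S) \leq M'(S)$ from the earlier remark we get all four parameters lie in $\{1,2\}$; finally I would note they are in fact all equal by observing that the value of $N(S,A)$ — being $1$ iff some generator among a minimal/minimum generating set equals $0$, which happens iff $0 \in I(S)$ is forced... — actually the simplest uniform conclusion is just the stated inequality $\leq 2$ together with the chain, so I would present the lemma as asserting equality follows since the squeeze $1 \le N' \le N \le M \le M' \le 2$ does not by itself give equality; so I would additionally check that whether $N(S,A) = 1$ is independent of $A$: $N(S,A) = 1$ iff $0 \in A$, and by Lemma~\ref{minimal-generating-set} applied to the ideal $I = \{0\}$ (which is generated by $S \setminus \{0\}$ since $S$ is $0$-simple with $S^2 \neq 0$, so $S \setminus \{0\}$ generates $0$), $0$ belongs to no minimal generating set, hence $N(S,A) = 2$ for every minimal (in particular every minimum) generating set, giving the full equality $N' = N = M = M' = 2$.

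The main obstacle I anticipate is the step showing every generating set (or at least every minimal one) must contain an element whose Rees coordinate $\lambda$ ranges over all of $\Lambda$ — equivalently that generators meet every $\mathcal R$-class of $S \setminus \{0\}$ — and dually for $I$; this is where one needs the observation that in a Rees matrix semigroup right multiplication never changes the $\mathcal R$-class ($(i,g,\lambda)(j,h,\mu)$ has first coordinate $i$) so to obtain an element with first coordinate $i$ one needs a generator with first coordinate $i$, and symmetrically for the last coordinate. Once that bookkeeping is in place, the two-step product $a' a_k = 0$ argument is immediate, and the reduction to Lemma~\ref{minimal-generating-set} for the exact value $2$ is routine.
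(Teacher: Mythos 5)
Your overall route is the same as the paper's: represent $S$ as a regular Rees matrix semigroup $M^0[G,I,\Lambda,P]$, note $\ker(S)=\{0\}$, and observe that in any factorization $a_1\cdots a_k=0$ of minimal length some sandwich entry $p_{\lambda j}$ vanishes, so that two consecutive factors already multiply to $0$ (indeed you could take $a'=a_{k-1}$ directly, since the last coordinate of $a_1\cdots a_{k-1}$ is that of $a_{k-1}$; the detour through covering $\mathcal{R}$-classes is unnecessary). That part correctly gives $N(S,A)\leq 2$ for every generating set, hence $M'(S)\leq 2$.

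The genuine gap is in your final step, where you claim that $\{0\}$ is contained in the subsemigroup generated by $S\setminus\{0\}$ ``since $S$ is $0$-simple with $S^2\neq 0$,'' and conclude via Lemma \ref{minimal-generating-set} that $0$ lies in no minimal generating set and hence all four parameters equal $2$. This is false when the sandwich matrix $P$ has no zero entry: then $(i,g,\lambda)(j,h,\mu)=(i,gp_{\lambda j}h,\mu)\neq 0$ always, so $S\setminus\{0\}$ is a (completely simple) subsemigroup and does not generate $0$. The simplest example is $S=G^0$, a group with a zero adjoined, which is $0$-simple with $S^2\neq 0$; there every generating set must contain $0$, and all four depth parameters equal $1$, not $2$. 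The condition $S^2\neq\{0\}$ only rules out the null case and does not put $0$ in $(S\setminus\{0\})^2$. The paper handles this by splitting into two cases: if $P$ has no zero entry, every generating set contains $0$ and all parameters equal $1$; if $P$ has a zero entry, your argument (and the paper's) applies, $0$ is excluded from minimal generating sets by Lemma \ref{minimal-generating-set}, and all parameters equal $2$. You sensed this issue mid-proof (``$0$ may not be a generator'') but then asserted the wrong resolution; as written, your proof proves a false statement for $G^0$, even though the lemma's inequality $\leq 2$ itself survives.
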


\begin{proof}
 
If $S$ is a finite $0$-simple
  semigroup then it is isomorphic to a regular Rees matrix semigroup
  \cite{Pin:1986}.
  Let $S = M^0[G,I,L,P]$ be represented as a Rees matrix semigroup over a group $G$, where $P$ is a regular matrix with entries from $G \cup \{0\}$. 
If $P$ does not contain any entry equal to $0$, then every generating set must contain the zero element (since the other elements do not generate it). Therefore $N(S)=M(S)=M'(S)=N'(S)=1$.
Suppose that $P$ does contain at least one $0$ entry. In this case, no minimal generating set can contain the zero element of $S$, since then ${0}$ forms an ideal  of $S$ and the subsemigroup generated by $S \setminus \{0\}$ contains $0$ (see Lemma
\ref{minimal-generating-set}). Let $A$ be any generating set of $S$. We show that there are at
  least two not necessarily distinct elements of $A$ whose product is
  $0$. Let for some $k\geq 2$
  $$
  (i_1,g_{i_1},j_1)(i_2,g_{i_2},j_2)\cdots(i_k,g_{i_k},j_k)=0.
  $$
  Then there exists $1\leq l< k$ such that $p_{j_li_{l+1}}=0$. Hence
  $$
  (i_l,g_{i_l},j_l)(i_{l+1},g_{i_{l+1}},j_{l+1})=0.
  $$
  Therefore there are two not necessarily distinct elements of
  $A$ whose product is $0$, which shows that $N(S,A)=2$. It follows that $$N(S)=M(S)=M'(S)=N'(S)=2.\qedhere$$ 
\end{proof}

 Let $S$ be a finite semilattice. 
We show that $I(S)$, the set of all irreducible elements of $S$, is the unique minimal generating set of $S$. This leads to the  equality of all parameters $M,N,M',N'$ . Then we find a sharp upper bound for $I(S)$-depth of $S$. Finally, the special case where the diagram of $S$ is a rooted tree is considered.

\begin{lem} \label{generating set-semilattice}
  Let $S$ be a semilattice. The set $I(S)$ is the unique minimal generating set of $S$. 
\end{lem}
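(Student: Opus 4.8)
The plan is to show two things: that $I(S)$ generates $S$, and that $I(S)$ is contained in every generating set of $S$. Together these give that $I(S)$ is the unique minimal generating set (indeed, the unique irredundant generating set). I will work throughout with the partial order $\leq$ on $S$ defined by $x \leq y \iff x = xy$, recalling that $xy = x \wedge y$ is the meet, and that $S$ being finite has a least element (the zero).

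For the first part, $S = \langle I(S) \rangle$, I would argue by induction on the order ideal below an element, or more simply by well-founded induction on $\leq$. Take $s \in S$. If $s$ is irreducible there is nothing to prove. Otherwise $s = ab$ with $a \neq s$ and $b \neq s$; since $s = ab = a \wedge b \leq a$ and $s \leq b$ and $s \neq a$, $s \neq b$, both $a$ and $b$ lie strictly above $s$ in $\leq$. By the induction hypothesis $a$ and $b$ are products of irreducibles, hence so is $s = ab$. This terminates because $S$ is finite and the ``strictly above'' relation is a strict partial order, so there are no infinite ascending chains relevant here (equivalently, induct downward from the maximal elements, which are automatically irreducible). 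This shows $I(S)$ is a generating set.

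For the second part, I would show that every generating set $A$ must contain $I(S)$; equivalently, that an irreducible element cannot be written as a product of elements other than itself. Suppose $s \in I(S)$ and $s = c_1 c_2 \cdots c_k$ with all $c_i \in S$. Then $s = c_1 \wedge (c_2 \cdots c_k)$, so writing $a = c_1$ and $b = c_2 \cdots c_k$ we have $s = ab$; irreducibility forces $a = s$ or $b = s$, i.e. $c_1 = s$ or $c_2 \cdots c_k = s$. Iterating (formally, induction on $k$) we conclude that some $c_i = s$. Hence any product expressing $s$ already uses $s$ as one of its factors, so $s \in A$ for every generating set $A$. Therefore $I(S) \subseteq A$ for every generating set, and combined with the first part, $I(S)$ is a generating set contained in every generating set; in particular it is the unique minimal one.

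I do not expect a serious obstacle here; the one point that needs a little care is the ``iterating'' step in the second part — making precise that irreducibility, which is phrased for a product of two factors, propagates to products of arbitrarily many factors. This is handled cleanly by induction on $k$ using associativity (grouping $c_1 c_2 \cdots c_k$ as $c_1 \cdot (c_2 \cdots c_k)$), so it is genuinely routine. One should also note explicitly at the start that $S$ itself, though a generating set, need not be minimal, so the content of the lemma is precisely the minimality and uniqueness, both of which follow from the containment $I(S) \subseteq A$ above.
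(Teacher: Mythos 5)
Your proposal is correct and follows essentially the same route as the paper: show $I(S)\subseteq A$ for every generating set $A$ by propagating irreducibility to products of arbitrarily many factors, and show $I(S)$ generates $S$ by a finiteness/ascending-chain argument on the order $\leq$. The only difference is that you make explicit (via induction on the number of factors) a step the paper leaves implicit, which is a harmless refinement rather than a different argument.
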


\begin{proof}
  Let $A$ be a generating set. First we show that $I(S)\subseteq A$.
  Let $s \in I(S)$. If $s \notin A$ then $s$ is a product of some
  elements in $A$ none of which is equal to $s$. This is in
  contradiction with irreducibility of $s$. Hence, we have $s \in A$.

  Now, we show that $I(S)$ is a generating set of $S$. Let $s\in
  S\setminus I(S)$. Then there exist $a,b\in S$ such that $s=a\wedge
  b$ while $s\neq a , s\neq b$. If both $a,b$ are irreducible then we
  are done, otherwise we repeat this process for $a,b$. This process
  must end after a finite number of steps because $S$ is finite and the elements which are produced at
  each step are strictly larger than the elements encountered in the previous step.
\end{proof}

The following corollary is an immediate consequence of Lemma \ref{generating set-semilattice}.

\begin{cor}\label{semilattice-cor}
Let $S$ be a finite semilattice. Then $$N(S)=N'(S)=M(S)=M'(S)=N(S,I(S)).$$
\end{cor}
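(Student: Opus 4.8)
The plan is to derive the chain of equalities directly from the uniqueness assertion of Lemma~\ref{generating set-semilattice}. First I would record the elementary fact that every generating set $A$ of $S$ with $|A| = \rank(S)$ is necessarily a minimal generating set: otherwise a proper subset $B \subsetneq A$ would generate $S$ with $|B| < |A| = \rank(S)$, contradicting the definition of $\rank(S)$ as the least cardinality of a generating set. Consequently the family of sets over which $N(S)$ and $M(S)$ are defined is a subfamily of the family over which $N'(S)$ and $M'(S)$ are defined.

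Next I would apply Lemma~\ref{generating set-semilattice}: since $I(S)$ is the \emph{unique} minimal generating set of $S$, the family $\{A : A \text{ is a minimal generating set of } S\}$ is the singleton $\{I(S)\}$. In particular $I(S)$ is itself the unique smallest generating set, so $|I(S)| = \rank(S)$, and the smaller family $\{A : S = \langle A\rangle,\ |A| = \rank(S)\}$ is also equal to $\{I(S)\}$ (it is non-empty, and by the previous paragraph it is contained in $\{I(S)\}$). Taking a minimum or a maximum of $N(S,A)$ over a one-element index set returns the single value $N(S,I(S))$, so each of $N(S)$, $M(S)$, $N'(S)$, $M'(S)$ equals $N(S,I(S))$, which is precisely the claim.

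There is no genuine obstacle here; the corollary is immediate once Lemma~\ref{generating set-semilattice} is available. The only step deserving an explicit word is the observation that a generating set of size $\rank(S)$ is automatically minimal, and that follows at once from the definition of rank.
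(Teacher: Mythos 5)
Your argument is correct and is exactly the reasoning the paper intends: the paper states the corollary as an immediate consequence of Lemma~\ref{generating set-semilattice} without writing out a proof, and your two observations (a generating set of cardinality $\rank(S)$ is automatically minimal, and the unique minimal generating set forces both index families to collapse to $\{I(S)\}$) are the standard way to make that implication explicit.
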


\begin{prop}\label{prop:semilattice}
  The inequality $N(S,I(S))\leq |I(S)|$ holds for every finite semilattice
  $S$. The equality holds if and only if $S$ is the free semilattice  generated by $I(S)$.
\end{prop}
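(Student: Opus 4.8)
The plan is to analyze a shortest product of irreducible elements equal to the zero (the kernel of a semilattice is $\{0\}$, the infimum of $S$). Since $I(S)$ is the unique minimal generating set (Lemma \ref{generating set-semilattice}), we have $N(S,I(S)) = l_{I(S)}(0) = ml_{I(S)}(\ker(S))$. Write $0 = a_1 a_2 \cdots a_k = a_1 \wedge a_2 \wedge \cdots \wedge a_k$ with each $a_i \in I(S)$ and $k = N(S,I(S))$ minimal. First I would observe that no irreducible element may be repeated in such a shortest product: if $a_i = a_j$ for $i<j$, then by commutativity and idempotency we could delete one occurrence and still obtain $0$, contradicting minimality. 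Hence $k \leq |I(S)|$, which gives the inequality $N(S,I(S)) \leq |I(S)|$.

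For the equality statement, the ``if'' direction is the easy one: if $S$ is the free semilattice on $I(S)$ — i.e., $S \cong (P(I(S)), \cup)$ with the irreducibles corresponding to the singletons — then the zero is the full set $I(S)$, every proper subset of $I(S)$ has union a proper subset (hence is not $0$), so the only way to write $0$ as a meet of irreducibles is to use all $|I(S)|$ of them, giving $N(S,I(S)) = |I(S)|$.

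For the ``only if'' direction, I would argue contrapositively: suppose $S$ is not free on $I(S)$, and show $N(S,I(S)) < |I(S)|$. Being not free means the natural surjective semilattice morphism $\varphi \colon P(I(S)) \to S$ sending a set of irreducibles to their meet is not injective; equivalently, there is some proper subset $B \subsetneq I(S)$ whose meet equals the meet of all of $I(S)$, namely $0$. I would take such a $B$ of minimal size — or more directly, observe that since $\varphi$ is not injective there exist two distinct subsets with the same image, and using the semilattice structure one can massage this into a proper subset $B$ with $\bigwedge B = 0$. Then $N(S,I(S)) \leq |B| < |I(S)|$. The small technical point is passing from ``$\varphi$ not injective'' to ``some proper subset of $I(S)$ meets to $0$''; this follows because if $\varphi(C_1) = \varphi(C_2)$ with $C_1 \neq C_2$, say $x \in C_1 \setminus C_2$, then $\varphi(C_1 \cup C_2) = \varphi(C_1)\wedge\varphi(C_2) = \varphi(C_1)$, so $\varphi((C_1\cup C_2)\setminus\{x\}) \geq \varphi(C_1\cup C_2) = \varphi(C_1) \leq \varphi((C_1\cup C_2)\setminus\{x\})$ forces equality, and iterating down to $0$ (which is $\varphi(I(S))$ and is the bottom, so adding elements never changes it) produces a proper subset of $I(S)$ meeting to $0$.

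The main obstacle is purely bookkeeping in that last paragraph: making the reduction from non-freeness to the existence of a \emph{proper} subset of $I(S)$ with meet $0$ fully rigorous, and being careful that "free semilattice generated by $I(S)$" is interpreted as the statement that $\varphi$ is an isomorphism (equivalently, that distinct subsets of irreducibles have distinct meets). Once that characterization is in hand, both directions are short.
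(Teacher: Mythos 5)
Your proposal is correct and follows essentially the same route as the paper: the inequality comes from the observation that a repetition-free product of irreducibles reaching $0$ has at most $|I(S)|$ factors (the paper simply notes that the product of \emph{all} irreducibles is $0$), and the equality case in both arguments hinges on the same trick of multiplying a relation between two products of irreducibles by the complementary irreducibles to force the value $0$. Your contrapositive organization of the ``only if'' direction is just a reshuffling of the paper's direct argument.
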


\begin{proof}
  First we show that the product of all elements in $I(S)$ is
  zero.
  Let $I(S)=\{a_1,a_2,\ldots,a_n\}$ and denote $a_1a_2\cdots a_n$ by $t$.
  If $s\in S$, then there exist
  $a_{i_1},a_{i_2},\ldots,a_{i_k}\in I(S)$ such that
  $s=a_{i_1}a_{i_2}\cdots a_{i_n}$. Now, we have $st=ts=t$ because $S$
  is commutative and idempotent. Therefore, we have $t=0$.
  
   For the second statement, first suppose that  $S$ is the free semilattice generated by $I(S)$. We show that $N(S)=|I(S)|$. Since $I(S)$ is a generating set of $S$, there exist $a_{i_1},a_{i_2}, \ldots, a_{i_k} \in I(S)=\{a_1,a_2,\ldots ,a_n\}$ such that $a_{i_1}a_{i_2}\cdots a_{i_k}=0$; because $S$ is commutative and idempotent we can suppose the $a_{i_j}$'s to be  distinct. Therefore, by the preceding paragraph, we have $a_{i_1}a_{i_2}\cdots a_{i_k}=a_1a_2\cdots a_n=0$. Now, because $S$ is a free semilattice we have $$\{a_{i_1},a_{i_2},\ldots , a_{i_k}\}=\{a_1,a_2 ,\ldots ,a_n\}$$ so that $k=n$. 
	
	Conversely, assuming that $N(S,I(S))= |I(S)|$, we show that $S$ is the free semilattice generated by $I(S)$. Suppose 
	\begin{equation}\label{above}
	a_{i_1}a_{i_2}\cdots a_{i_k}= a_{j_1} a_{j_2} \cdots a_{j_{\ell}}.
	\end{equation}
	Let $\{ a_{i_{k+1}}, a_{i_{k+2}}, \ldots a_{i_n}\}$ be the set $I(S) \setminus \{a_{i_1},\ldots, a_{i_k}\}.$ By equality \eqref{above}, we have 
\begin{equation*}
a_{i_1}a_{i_2}\cdots a_{i_k}a_{i_{k+1}} a_{i_{k+2}}\cdots a_{i_n}= a_{j_1}a_{j_2}\cdots a_{j_l}a_{i_{k+1}} a_{i_{k+2}}\cdots a_{i_n}.
\end{equation*}
	Since $N(S)=M'(S)= |I(S)|$ the subset $\{ a_{j_1},a_{j_2},\ldots a_{j_{\ell}},a_{i_{k+1}}, a_{i_{k+2}},\ldots a_{i_n}\}$ must be the whole set $I(S)$. 
	This shows that $$\{a_{i_1},a_{i_2},\ldots ,a_{i_k}\} \subseteq \{a_{j_1}, a_{j_2} \ldots ,a_{j_{\ell}} \}.$$ By
symmetry, the reverse
inclusion $\{a_{j_1},\ldots, a_{j_{\ell}}\} \subseteq\{a_{i_1},\ldots, a_{i_k}\}$ also holds.  It follows that $S$ is the semilattice freely generated by $I(S)$.
	\end{proof}

\begin{prop}
  If the diagram of a finite semilattice $S$ is a rooted tree then
  $N(S,I(S))\leq 2$.
\end{prop}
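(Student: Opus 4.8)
The plan is to use that the kernel of a finite semilattice $S$ is just $\{0\}$, where $0$ denotes the zero of $S$ (indeed $\{0\}$ is an ideal and every ideal of $S$ absorbs into $0$). Consequently $N(S,I(S))=l_{I(S)}(0)$, so I only need to produce $0$ as a product of at most two irreducible elements, or to notice that $0$ is itself irreducible. The one structural ingredient I would record first is the converse of the Remark preceding the statement: when the diagram of $S$ is a rooted tree, the principal down-set $\{x\in S:x\le t\}$ is a chain for every $t\in S$. This holds because a saturated chain between comparable elements is exactly a directed path in the diagram, so two incomparable elements $x,y\le t$ would yield two distinct directed paths from $x\wedge y$ to $t$, one through each of two distinct upper covers of $x\wedge y$, contradicting uniqueness of paths in a tree.

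With that in hand I would split into two cases. If $0\in I(S)$, then $0$ is a generator and $l_{I(S)}(0)\le 1\le 2$; this also subsumes the degenerate case $S=\{0\}$, where $0$ is the identity and $l_{I(S)}(0)=0$. Otherwise $0$ is reducible, say $0=ab$ with $a\ne 0\ne b$. Since $a>0$ and $b>0$, there is an upper cover $c_1$ of $0$ with $c_1\le a$ and an upper cover $c_2$ of $0$ with $c_2\le b$; these are distinct, for a common upper cover $c$ of $0$ would satisfy $c\le a\wedge b=0$, which is impossible. Now choose (using finiteness of $S$) a maximal element $a'$ of $S$ above $c_1$ and a maximal element $b'$ above $c_2$; since maximal elements are irreducible, $a',b'\in I(S)$.

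Finally I would verify $a'b'=a'\wedge b'=0$. The down-set of $a'$ is a chain containing $c_1$, and $c_1$ is its only atom, so every nonzero element below $a'$ lies above $c_1$; likewise every nonzero element below $b'$ lies above $c_2$. If $a'\wedge b'\ne 0$ we would get $c_1\le a'\wedge b'$ and $c_2\le a'\wedge b'$, placing the incomparable elements $c_1,c_2$ in the down-set of $a'\wedge b'$, contradicting that this down-set is a chain. Hence $a'\wedge b'=0$, and $0=a'b'$ exhibits $0$ as a product of two elements of $I(S)$, so $N(S,I(S))\le 2$. The only part needing real care is the order-theoretic claim that a tree-shaped diagram forces principal down-sets to be chains; once that is available, the rest is a short manipulation of covers and meets.
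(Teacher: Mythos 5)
Your proof is correct and follows essentially the same route as the paper: split on whether the zero is irreducible, and otherwise locate two distinct atoms (children of the root) and take irreducible maximal elements above each, whose meet must be $0$. The paper phrases this graph-theoretically (two subtrees hanging off the root, with leaves as the irreducible witnesses) and leaves the final verification $u_1u_2=0$ implicit, whereas you justify it via the chain property of principal down-sets; the substance is the same.
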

  
\begin{proof} 
  Denote the diagram of $S$ by $T$. It is clear that
  $I(S)=\{ v \in V(T) : 
  \deout(v)\leq 1\}.$
    Let $v_0$ be the root of the tree $T$. If $v_0$ belongs to $I(S)$ then
    $N(S,I(S)) \leq 1$. Suppose that $v_0 \not \in I(S)$. We show that there are
    two elements in $I(S)$ whose product is zero.
    Because $\deout(v_0)\geq 2$, there exist two distinct vertices
    $v_1,v_2$ such that $v_0 \rightarrow v_1$ and $v_0 \rightarrow v_2$.
Denote by $T_i$ the rooted subtree of $T$ with $v_i$ as its root. Note that $V(T_i)\cap I(S)\neq \varnothing$ because every subtree contains leaves and leaves are irreducible. If $u_i$ belongs to  $V(T_i)\cap I(S)$ then $u_1u_2=0$.
\end{proof}

Let $S$ be a completely regular semigroup. Green's relation
$\mathcal{D}$ is a congruence in $S$ and $S/ \mathcal{D}$ is a
semilattice of $\mathcal{D}$-classes which are simple semigroups
\cite{Higgins:1992}. Hence, by the results obtained for semilattices, we have the following lemma for completely regular semigroups.

 If a $\mathcal{D}$-class of a completely regular semigroup $S$ is an
irreducible element of the semilattice $S/ \mathcal{D}$, then we call it an \emph{irreducible
  $\mathcal{D}$-class} of $S$\index{irreducible!${D}$-class}. Denote by $\mathrm{IRD}(S)$\index[notation]{$\mathrm{IRD}(S)$} the set of
all irreducible $\mathcal{D}$-classes of $S$.

\begin{lem}\label{completely-regular} 
  Let $S$ be a completely regular semigroup. Then the following inequality holds
  $$M'(S) \leq N(S/\mathcal{D} )\leq |\mathrm{IRD}(S)|.$$
\end{lem}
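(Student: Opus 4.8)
The plan is to relate a minimal generating set of $S$ to a minimal generating set of the semilattice quotient $S/\mathcal{D}$, and then invoke Corollary \ref{semilattice-cor} and Proposition \ref{prop:semilattice}. Since $S$ is completely regular, $\mathcal{D}$ is a congruence and $\bar S := S/\mathcal{D}$ is a finite semilattice whose elements are the $\mathcal{D}$-classes of $S$; write $\pi\colon S\to\bar S$ for the quotient map. By Lemma \ref{generating set-semilattice} the unique minimal generating set of $\bar S$ is $I(\bar S)$, and by definition $\pi$ maps $\mathrm{IRD}(S)$ onto $I(\bar S)$; moreover $|I(\bar S)|\le|\mathrm{IRD}(S)|$, which already accounts for the second inequality once the first is in hand.

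For the first inequality, let $A$ be an arbitrary generating set of $S$ (recall that by an earlier remark $M'(S)$ is the maximum of $N(S,A)$ over \emph{all} generating sets, so it suffices to bound $N(S,A)$ by $N(\bar S)$ for every such $A$). Then $\pi(A)$ is a generating set of $\bar S$, hence $I(\bar S)\subseteq\pi(A)$ by Lemma \ref{generating set-semilattice}. First I would check that the kernel of $S$ sits inside a single $\mathcal{D}$-class, namely the one corresponding to the zero of $\bar S$: indeed $\ker(S)$ is the minimum ideal, its image $\pi(\ker(S))$ is an ideal of $\bar S$, hence contains the zero $0_{\bar S}$, and since $\ker(S)$ is a single $\mathcal{D}$-class (being simple) its image is the single element $0_{\bar S}$. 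So reaching $\ker(S)$ in $S$ forces reaching $0_{\bar S}$ in $\bar S$: if $s=a_1\cdots a_k\in\ker(S)$ with $a_i\in A$, then $0_{\bar S}=\pi(a_1)\cdots\pi(a_k)$ is a product of $k$ elements of $\pi(A)$. By Proposition \ref{prop:semilattice} (its proof shows the product of all of $I(\bar S)$ equals $0_{\bar S}$, and that a product of generators equalling $0_{\bar S}$ may be taken with distinct factors drawn from $I(\bar S)$), there is such a product of length at most $|I(\bar S)|$; more to the point, the shortest way to reach $0_{\bar S}$ using the generating set $\pi(A)$ is $N(\bar S,\pi(A))$, and since $I(\bar S)\subseteq\pi(A)$ with $I(\bar S)$ the unique minimal generating set we get $N(\bar S,\pi(A))=N(\bar S,I(\bar S))=N(\bar S)$ by Corollary \ref{semilattice-cor}. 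Hence $N(S,A)\ge N(\bar S)$ is the wrong direction — rather, we want the reverse: a length-$N(\bar S)$ product of elements of $I(\bar S)$ equalling $0_{\bar S}$ lifts, by picking for each factor an irreducible $\mathcal{D}$-class representative appearing in $A$, to a length-$N(\bar S)$ product in $S$ landing in the $\mathcal{D}$-class $\ker(S)$, and since that $\mathcal{D}$-class \emph{is} $\ker(S)$, this product already lies in $\ker(S)$. Therefore $N(S,A)\le N(\bar S)$ for every generating set $A$, giving $M'(S)\le N(\bar S)=N(S/\mathcal{D})$.

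The main obstacle is the lifting step: from a short product of irreducibles in $\bar S$ realizing $0_{\bar S}$, one must produce a short product of elements of $A$ realizing an element of $\ker(S)$, and one must be sure the result lands in $\ker(S)$ rather than merely in some $\mathcal{D}$-class mapping to $0_{\bar S}$. This is where it is essential that $\ker(S)$ is a single $\mathcal{D}$-class and that $\pi^{-1}(0_{\bar S})=\ker(S)$ exactly — so any product whose image is $0_{\bar S}$ automatically lies in $\ker(S)$ — and that for each irreducible $\mathcal{D}$-class $D\in\mathrm{IRD}(S)$ used in the $\bar S$-product we can find some generator in $A$ whose class is $D$ (which holds because $\pi(A)\supseteq I(\bar S)$). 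Once that is clear, the bound $N(S,A)\le N(\bar S,I(\bar S))\le|I(\bar S)|\le|\mathrm{IRD}(S)|$ follows, completing the proof.
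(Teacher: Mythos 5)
Your argument is correct and follows essentially the same route as the paper: you show every generating set $A$ meets each irreducible $\mathcal{D}$-class (you deduce this from $I(S/\mathcal{D})\subseteq\pi(A)$ via Lemma \ref{generating set-semilattice}, while the paper argues it directly, but these are the same idea), and then lift a shortest zero-product of irreducible classes in $S/\mathcal{D}$ to a product of generators that lands in $\ker(S)=\pi^{-1}(0_{S/\mathcal{D}})$. Aside from some mid-proof wavering about the direction of the inequality, which you resolve correctly, there is nothing to fix.
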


\begin{proof}
  Let $A$ be a generating set of $S$. First we show that $ D \cap
  A\not = \varnothing $ for every $D\in \mathrm{IRD}(S)$. Let $D \in
  \mathrm{IRD}(S)$ and $d\in D $. There exist $a_1,a_2,\ldots,a_j \in
  A$ such that $d=a_1a_2\cdots a_j$. Therefore, we have $D_{a_1}D_{a_2}\cdots
  D_{a_j}\subseteq D_d=D$. Because $D$ is an irreducible
  $\mathcal{D}$-class of $S$ there exists $k\in \{1,2,\ldots,j\}$ such
  that $D_{a_k}=D$. Therefore, we have $a_k\in D\cap A \not = \varnothing$.

Now, we prove the first inequality. Let $t=N(S/ \mathcal{D})$. By Corollary \ref{semilattice-cor}, there are irreducible $\mathcal{D}$-classes $D_1,D_2,\ldots,D_t$ of $S$ such that 
$D_1 D_2\cdots D_t = \ker(S)$. Let $a_i \in A \cap D_i$ (we have shown that it exists). Then $a_1 a_2 \cdots a_t \in \ker(S)$, whence $N(S,A) \leq t$. Since $A$ is arbitrary, we 
get $M'(S) \leq N(S/ \mathcal{D})$. The second inequality follows from Proposition \ref{prop:semilattice}.
\end{proof}
\subsection{$A$-depth of transformation semigroups}\label{su:depth}

Our main goal in this section is estimating the depth parameters for some families of finite transformation semigroups. 
First we find a lower bound for $N'(S)$ where $S$ is any finite transformation semigroup. Let $S$ be a finite transformation semigroup and $A$ be a minimal generating set of $S$. Denote by $r(S,A)$\index[notation]{$r(S,A)$} the minimum of the ranks of elements in $A$; and denote by $t(S)$\index[notation]{$t(S)$} the rank of elements in the minimum ideal of $S$. The following corollary of Lemma \ref{minimal-generating-set} shows that $r(S,A)$ is independent of the choice of the  minimal generating set $A$.
\begin{cor}\label{r-minimal}
Let $S \leq PT_n$ be a finite transformation semigroup. Let $A$ and $B$ be two minimal generating sets of $S$. We have $r(S,A) = r(S,B).$
\end{cor}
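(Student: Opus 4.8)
The statement is that $r(S,A)$—the minimum rank appearing among elements of a minimal generating set $A$—does not depend on which minimal generating set we pick. The natural strategy is to show that for \emph{any} minimal generating set $A$, the value $r(S,A)$ equals a quantity intrinsic to $S$, namely the smallest rank $\rho$ such that the set $S_{\geq \rho} := \{\alpha \in S : \rank(\alpha) \geq \rho\}$ still generates $S$ (equivalently, $S_{\leq \rho-1} := \{\alpha\in S:\rank(\alpha)\le\rho-1\}$ is contained in the subsemigroup generated by the higher-rank elements). Once both $r(S,A)$ and $r(S,B)$ are pinned to this same $\rho$, we are done.

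First I would record the elementary fact that rank is submultiplicative: if $\alpha = \beta_1\beta_2\cdots\beta_k$ in $PT_n$, then $\rank(\alpha) \leq \min_i \rank(\beta_i)$, since the image of a composite is contained in the image of the last factor and has size at most that of any intermediate image. Consequently, the set $J$ of all elements of $S$ of rank strictly less than $r(S,A)$ is an ideal of $S$ (multiplying by anything only drops rank), and no element of $J$ can be written as a product of elements all of rank $\geq r(S,A)$. Now apply Lemma~\ref{minimal-generating-set}: since $A \subseteq S \setminus J$ by definition of $r(S,A)$, and $A$ generates $S$, the ideal $J$ is contained in the subsemigroup generated by $S\setminus J$; conversely, if $A$ is any minimal generating set then Lemma~\ref{minimal-generating-set} forces $A \cap J = \emptyset$ for \emph{every} ideal $J$ with that containment property. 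The point is to identify the \emph{largest} such ideal, or rather the threshold rank.

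Concretely: let $\rho$ be the least positive integer such that $J_{<\rho} := \{\alpha\in S:\rank(\alpha)<\rho\}$ is contained in $\langle S\setminus J_{<\rho}\rangle$. For a minimal generating set $A$, submultiplicativity shows $r(S,A)\ge\rho$ cannot happen strictly—if $r(S,A)>\rho$ then $A\subseteq S\setminus J_{<\rho+1}$... hmm, I need to be careful about the direction. The cleaner route: (i) \emph{$r(S,A)\ge\rho$ for every minimal generating set $A$.} Indeed $J_{<\rho}$ is an ideal contained in $\langle S\setminus J_{<\rho}\rangle$, so by Lemma~\ref{minimal-generating-set} $A\cap J_{<\rho}=\emptyset$, i.e.\ every element of $A$ has rank $\ge\rho$. (ii) \emph{$r(S,A)\le\rho$ for every minimal generating set $A$.} By minimality of $\rho$, the set $S\setminus J_{<\rho}$ (all elements of rank $\ge\rho$) must itself generate $S$—wait, that is not quite what minimality of $\rho$ gives; minimality of $\rho$ says $J_{<\rho}\subseteq\langle S\setminus J_{<\rho}\rangle$ while $J_{<\rho-1}\not\subseteq\langle S\setminus J_{<\rho-1}\rangle$. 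One then checks that $\langle S\setminus J_{<\rho}\rangle = \langle\{\alpha:\rank\alpha\ge\rho\}\rangle = S$ (since it contains $J_{<\rho}$ and everything of rank $\ge\rho$), so some minimal generating subset $A_0$ of this set generates $S$, giving $r(S,A_0)=\rho$ for at least one choice. To get $\le\rho$ for \emph{all} minimal generating sets, I would argue that if every element of $A$ had rank $\ge\rho+1$, then $A\subseteq S\setminus J_{<\rho+1}$, so $J_{<\rho+1}\subseteq\langle S\setminus J_{<\rho+1}\rangle$ (as $A$ generates $S\supseteq J_{<\rho+1}$), contradicting the minimality of $\rho$ at the value $\rho+1$... this needs the implication "$J_{<\rho+1}$ generated by its complement" to contradict something—actually it contradicts nothing directly, so the right formulation is: $\rho$ should be defined as the \emph{maximum} rank threshold that is forced. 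Let me instead define $\rho := \min\{\,r(S,A) : A \text{ a minimal generating set}\,\}$ and $\rho' := \max$ of the same, and show $\rho=\rho'$ by a swapping argument: given $A$ with $r(S,A)=\rho$ and $B$ with $r(S,B)=\rho'$, pick $b\in B$ of rank $\rho'$; express $b$ via $A$, use submultiplicativity to see all factors have rank $\ge\rho'\ge\rho$, so actually one shows the sub-generating-set of $A$ of elements of rank $\ge\rho'$ already generates $b$ and hence (running over all $b\in B$) generates $S$, whence by minimality $A$ has no element of rank $<\rho'$, i.e.\ $\rho=r(S,A)\ge\rho'$; combined with $\rho\le\rho'$ this gives equality.

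\textbf{The main obstacle.} The genuinely delicate point is step (ii)—showing $r(S,A)$ cannot be strictly larger than the intrinsic threshold for \emph{any} minimal generating set, not merely for a well-chosen one. This is where one must combine submultiplicativity of rank with the ideal structure carefully: the key insight to nail down is that if $A$ generates $S$ and every element of $A$ has rank $\ge\rho'$, then every element of $S$ of rank $<\rho'$ lies in the subsemigroup generated by $\{\alpha\in S:\rank\alpha\ge\rho'\}$, and so (by Lemma~\ref{minimal-generating-set} applied with the ideal $J_{<\rho'}$) \emph{no} minimal generating set meets $J_{<\rho'}$—in particular the one realizing the minimum doesn't, contradiction with $r(S,A)=\rho<\rho'$. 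So the whole proof reduces to: rank is submultiplicative $\Rightarrow$ $J_{<\rho'}$ is an ideal not meeting $A$-products' low-rank obstructions $\Rightarrow$ invoke Lemma~\ref{minimal-generating-set}. I would write it as a short two-paragraph argument built entirely on submultiplicativity of rank plus Lemma~\ref{minimal-generating-set}.
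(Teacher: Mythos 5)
Your final argument is correct and is essentially the paper's own proof: the set of elements of rank below $r(S,A)$ is an ideal (by submultiplicativity of rank in $PT_n$) contained in the subsemigroup generated by its complement, so Lemma~\ref{minimal-generating-set} forces every minimal generating set to avoid it, giving $r(S,B)\ge r(S,A)$ and equality by symmetry. The detour through an ``intrinsic threshold $\rho$'' contains some false starts and is unnecessary, but the two-paragraph argument you settle on at the end is exactly the paper's.
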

\begin{proof}
It is enough to show that
 $\min \{\rank(f) : f \in A \} \leq  \min \{\rank(f): f \in B \}.$
 Suppose that $\min \{\rank(f) : f \in A\}=r$. If $\{f \in S : \rank(f) < r\}=\emptyset$ then we are done. Let $\{f \in S : \rank(f) < r\} \not =\emptyset$. Consider the subsemigroup 
$I=\{f \in S : \rank(f) < r\}.$ It is easy to see that $I$ is an ideal of $S$. Since $A \subseteq S \setminus I$, by Lemma \ref{minimal-generating-set} we have $B \subseteq S \setminus I$. Hence, we have  $\min \{\rank(f): f \in B \} \geq r$.   
\end{proof}
From now on, we use $r(S)$\index[notation]{$r(S)$} instead, since it depends only on $S$.

 \begin{lem}\label{rank-lower-bound1}
Let $X=\{ f \in {PT}_n : \rank(f) \geq r \}$. For $f_1,f_2,\ldots,f_k \in X$ the inequality  
\begin{equation}\label{eq-rank-lower-bound}
\rank(f_1 f_2 \cdots f_k) \geq n-k(n-r),
\end{equation}
holds.
 \end{lem}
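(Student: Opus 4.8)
The plan is to prove the inequality by induction on $k$, using the single elementary fact that composing with one partial transformation of rank at least $r$ can decrease the rank of the image by at most $n-r$. First I would record the base case: for $k=1$ the claim is just $\rank(f_1)\geq r = n-(n-r)$, which holds by hypothesis. For the inductive step I would set $g=f_1 f_2\cdots f_{k-1}$, so that by the induction hypothesis $\rank(g)\geq n-(k-1)(n-r)$, and then estimate $\rank(g f_k)$ in terms of $\rank(g)$ and $\rank(f_k)$.

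The key step is the rank-drop estimate for a single composition: if $g,f\in PT_n$ then
\begin{equation*}
\rank(gf)\geq \rank(g) - (n - \rank(f)).
\end{equation*}
I would prove this by viewing $\Im(g)$ as a set of size $\rank(g)$ and noting that $\Im(gf)=\Im(g\, f)$ is the image of $\Im(g)$ under $f$; the elements of $\Im(g)$ that fail to contribute to $\Im(gf)$ are exactly those lying outside $\dom(f)$ together with collisions under $f$, but since $f$ restricted to its domain has image of size $\rank(f)$, at most $n-\rank(f)$ ``slots'' are lost. More carefully: $|\Im(g)\cap\dom(f)|\geq \rank(g)-(n-|\dom(f)|)$, and then $f$ maps this set onto $\Im(gf)$, which has size at least $|\Im(g)\cap\dom(f)| - (|\dom(f)|-\rank(f))$; adding the two losses gives a total loss of at most $(n-|\dom(f)|)+(|\dom(f)|-\rank(f)) = n-\rank(f)$, as claimed. (For $I_n$ or $T_n$ one of the two terms vanishes, but the bound $n-\rank(f)$ covers all of $PT_n$ uniformly.)

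Granting this, the induction closes immediately: using $\rank(f_k)\geq r$,
\begin{equation*}
\rank(f_1\cdots f_k)=\rank(g f_k)\geq \rank(g)-(n-r)\geq \bigl(n-(k-1)(n-r)\bigr)-(n-r)=n-k(n-r),
\end{equation*}
which is \eqref{eq-rank-lower-bound}. I do not anticipate a serious obstacle here; the only point requiring care is the bookkeeping in the single-composition estimate, separating the loss due to $\Im(g)$ meeting the complement of $\dom(f)$ from the loss due to non-injectivity of $f$ on its domain, and checking that the two contributions sum to exactly $n-\rank(f)$ regardless of how $\dom(f)$ sits inside $X_n$.
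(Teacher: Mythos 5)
Your proof is correct and follows essentially the same route as the paper: induction on $k$, with the inductive step resting on the single-composition estimate $\rank(gf)\geq \rank(g)+\rank(f)-n$. The only difference is cosmetic — you derive that estimate directly by tracking the two losses suffered by $\Im(g)$ (points outside $\dom(f)$ plus collisions under $f$), whereas the paper obtains the same inequality by contradiction, bounding the image of the last factor restricted to the complement of $\Im(g)$; both computations are sound.
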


\begin{proof}
We use induction on $k$. For $k=1$, the lower bound given by \eqref{eq-rank-lower-bound} is obvious. Now, let $f_1,f_2, \ldots ,f_{k+1}$ be $k+1$ not necessarily distinct elements of $X$. Denote the composite transformation $f_1f_2 \cdots f_k$ by $f$. By the induction hypothesis, we know that $\rank(f) \geq n-k(n-r)$. Then, it is enough to show for $f_{k+1} \in X$ that
$$\rank(ff_{k+1}) \geq n-(k+1)(n-r).$$
Let $\rank(f)=t$ and $\Im(f)=\{a_1,a_2,\ldots,a_t\}$. Suppose that $$\rank(ff_{k+1}) < n-(k+1)(n-r).$$ Because $\rank(f_{k+1}) \geq r$, it follows that 
$$|(X_n\setminus \{a_1,a_2,\ldots,a_t\})f_{k+1}| > r- (n-(k+1)(n-r)).$$ On the other hand, the inequality 
 $|(X_n\setminus \{a_1,a_2,\ldots,a_t\})f_{k+1}| \leq n-t$ holds. Hence $$r- (n-(k+1)(n-r)) < n-t$$ which gives $t < n-k(n-r)$. Since $t=\rank(f) \geq n-k(n-r)$, this contradiction implies that $$\rank(ff_{k+1}) \geq n-(k+1)(n-r),$$ which completes the proof. 
\end{proof}

The next theorem gives a lower bound for $N'(S)$ where $S$ is a finite transformation semigroup. 
\begin{notation}
For any number $k$ denote by $\left\lceil  k \right\rceil$ \index[notation]{$\left\lceil  k \right\rceil$} the least integer greater than or equal to $k$.  
\end{notation}

\begin{theorem} \label{lower-bound-N'}
If $S \leq {PT}_n$ and $S$ is not a group with $r(S) \leq n-1$, then $$N'(S) \geq \left\lceil \frac{n-t(S)}{n-r(S)}\right\rceil .$$
 
\end{theorem}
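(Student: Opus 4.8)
The plan is to combine the two preceding lemmas. Let $A$ be any minimal generating set of $S$ realizing the minimum, and let $s$ be an element of $\ker(S)$ of minimum length $N(S,A) = k$, say $s = f_1 f_2 \cdots f_k$ with each $f_i \in A$. By definition of $r(S)$ (and Corollary \ref{r-minimal}, which guarantees this value is well-defined), each $f_i$ has $\rank(f_i) \geq r(S)$, so each $f_i$ lies in the set $X = \{ f \in PT_n : \rank(f) \geq r(S) \}$. Applying Lemma \ref{rank-lower-bound1} gives
$$\rank(s) = \rank(f_1 f_2 \cdots f_k) \geq n - k\,(n - r(S)).$$
On the other hand $s \in \ker(S)$, so $\rank(s) = t(S)$ by definition of $t(S)$. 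Hence $t(S) \geq n - k(n - r(S))$, which rearranges (using $r(S) \leq n-1$, so that $n - r(S) \geq 1 > 0$ and we may divide) to
$$k \geq \frac{n - t(S)}{n - r(S)}.$$
Since $k$ is an integer, $k \geq \lceil (n - t(S))/(n - r(S)) \rceil$, and as $k = N(S,A) = N'(S)$ this is exactly the claimed bound.

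There are a couple of points that need a little care. First, one must make sure $s$ is a genuine word of length $N'(S) \geq 1$ in the generators, i.e.\ that $s$ is not the identity: this is fine because $S$ is assumed not to be a group, so $\ker(S)$ contains no identity element and $N'(S) \geq 1$; in particular the product $f_1 \cdots f_k$ is non-empty and Lemma \ref{rank-lower-bound1} applies with $k \geq 1$. Second, one should note that Corollary \ref{r-minimal} is what lets us speak of $r(S)$ rather than $r(S,A)$, so that the bound depends only on $S$; the generators of whatever minimal generating set achieves the minimum all have rank at least $r(S)$.

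The only real obstacle is making sure the two named quantities line up with what the lemmas actually control: $r(S)$ is the \emph{minimum} rank among generators, which is exactly the threshold $r$ needed so that every $f_i \in X$; and $t(S)$ is the \emph{common} rank of elements of $\ker(S)$ (elements of the minimum ideal of a finite transformation semigroup all have the same rank, since $\ker(S)$ is a single $\mathcal{J}=\mathcal{D}$-class), so $\rank(s) = t(S)$ holds for \emph{any} $s \in \ker(S)$, not just some. Once these identifications are in place the argument is a one-line substitution into Lemma \ref{rank-lower-bound1} followed by elementary rearrangement and the ceiling step, so I do not expect any serious difficulty beyond bookkeeping.
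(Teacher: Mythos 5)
Your proof is correct and follows essentially the same route as the paper's: both apply Lemma \ref{rank-lower-bound1} to a shortest product of generators (all of rank at least $r(S)$, via Corollary \ref{r-minimal}) landing in $\ker(S)$, use that such an element has rank $t(S)$, and rearrange to get the ceiling bound. Your additional remarks on why the hypotheses ($S$ not a group, $r(S)\leq n-1$) are needed are accurate bookkeeping that the paper leaves implicit.
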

\begin{proof}
Let $A$ be a minimal generating set of $S$. Note  $A \subseteq \{ f \in S : \rank (f) \geq r(S) \}.$ Let $f_1,f_2,\ldots,f_k \in A$ such that $f_1f_2\ldots f_k \in\ker(S)$. Since $\rank(f_1f_2 \ldots f_k)=t(S),$ then by Lemma \ref{rank-lower-bound1} we have $k \geq \left\lceil \frac{n-t(S)}{n-r(S)}\right\rceil$. Hence, we have $N(S,A) \geq\left\lceil \frac{n-t(S)}{n-r(S)}\right\rceil $, which is the desired conclusion.
\end{proof}

Theorem \ref{lower-bound-N'} presents a lower bound for $N'$ for finite transformation semigroups which are not groups. For estimating the other parameters $N,M,M'$ we should know more about generating sets. Nevertheless, the following very simple lemma provides the main idea to estimate those parameters for some families of finite transformation semigroups.

\begin{lem}\label{maximum-j-class}
Let $S$ be a finite semigroup such that $S\setminus \{1\}$ \footnote{Note that $S\setminus \{1\}=S$ if $S$ is not a monoid.} is its subsemigroup and has a unique maximal $\mathcal{J}$-class $J$. Let $A$ be a  generating set of $S$. Then each $\mathcal{L}$-class and each $\mathcal{R}$-class of $J$ has at least one element in $A$.   
\end{lem}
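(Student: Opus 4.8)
The plan is to exploit the interplay between Green's relations and the fact that the maximal $\mathcal{J}$-class $J$ sits "at the top" of $S \setminus \{1\}$. The key structural fact I would use is that if $J$ is the unique maximal $\mathcal{J}$-class of the subsemigroup $S\setminus\{1\}$, then for any element $s \in J$ and any factorization $s = a_1 a_2 \cdots a_k$ with each $a_i \in S\setminus\{1\}$, every factor $a_i$ must already lie in $J$. Indeed $D_{a_1}D_{a_2}\cdots D_{a_k} \subseteq D_s = J$ forces (via the standard $\mathcal{J}$-order: a product lies $\mathcal{J}$-below each factor, so $J = J_s \leq_{\mathcal{J}} J_{a_i}$) each $J_{a_i}$ to be $\mathcal{J}$-above $J$; but $J$ is maximal, so $J_{a_i} = J$. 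This is the same argument already used in the proof of Lemma~\ref{completely-regular}, now applied to a single $\mathcal{J}$-class rather than an irreducible $\mathcal{D}$-class of a quotient.

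Next I would bring in the $\mathcal{L}$- and $\mathcal{R}$-class refinement. Fix an element $s \in J$, and consider its $\mathcal{L}$-class $L$. Since $A$ is a generating set of $S$ and $s \notin \{1\}$, we may write $s = a_1 a_2 \cdots a_k$ with each $a_i \in A \subseteq S\setminus\{1\}$ (we can discard any occurrences of the identity). By the previous paragraph, all the $a_i$ lie in $J$. Now I use the elementary fact that in a semigroup a product $xy$ is always $\mathcal{L}$-below $y$ and $\mathcal{R}$-below $x$; combined with the fact that inside a single $\mathcal{J}=\mathcal{D}$-class $\leq_{\mathcal{L}}$ restricted to that class coincides with $\mathcal{L}$ (and likewise for $\mathcal{R}$), we get $s \mathrel{\mathcal{L}} a_k$ and $s \mathrel{\mathcal{R}} a_1$. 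Therefore $a_k \in L \cap A$, so $L$ meets $A$; and dually $a_1$ lies in the $\mathcal{R}$-class of $s$, so that $\mathcal{R}$-class meets $A$. Since $s \in J$ was arbitrary, every $\mathcal{L}$-class and every $\mathcal{R}$-class of $J$ contains an element of $A$.

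The one point requiring a little care — and the place I'd expect a referee to push back — is the justification that within the $\mathcal{D}$-class $J$, the relation $\leq_{\mathcal{L}}$ collapses to $\mathcal{L}$, i.e. that $a_k \leq_{\mathcal{L}} s$ together with $a_k \mathrel{\mathcal{J}} s$ forces $a_k \mathrel{\mathcal{L}} s$. This is a standard lemma for finite semigroups (it follows from stability: in a finite semigroup $x \leq_{\mathcal{L}} y$ and $x \mathrel{\mathcal{J}} y$ imply $x \mathrel{\mathcal{L}} y$), and it is fair to invoke it given the paper's stated assumption that the reader is familiar with Green's relations. Alternatively, and perhaps more transparently, I would argue directly: write $s = (a_1\cdots a_{k-1})\,a_k$; the left factor $a_1\cdots a_{k-1}$, being a product of elements of $J$ whose product $s$ still lies in $J$, must itself lie in $J$ by the maximality argument applied again (the prefix is $\mathcal{J}$-above $s$ hence in $J$); so $s$ lies in the same $\mathcal{R}$-class as that prefix and the same $\mathcal{L}$-class as $a_k$ by the Green's-relations multiplication rules inside $\mathcal{D}$-classes. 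Spelling this out via the egg-box picture of $J$ avoids any appeal to stability beyond what is routine.
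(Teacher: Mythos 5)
Your proof is correct and follows essentially the same route as the paper's (much terser) argument: the paper simply asserts that, since $\mathcal{J}=\mathcal{D}$ in a finite semigroup, any factorization of $x\in J$ over $A$ must begin with a factor $\mathcal{R}$-equivalent to $x$ and end with one $\mathcal{L}$-equivalent to $x$, which is exactly what you establish by first pushing all factors into $J$ via maximality and then invoking stability. Your version supplies the details the paper leaves implicit, but it is the same proof.
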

\begin{proof}
Let $x\in J$. Since $S$ is  finite  we have $\mathcal{J}=\mathcal{D}$, then for $x$ to be a product of elements of $A$ it is necessary that at least one element of $A$ be $\mathcal{L}$-equivalent to $x$ and at least one element of $A$ be $\mathcal{R}$-equivalent to $x$. Thus $A$ must cover the $\mathcal{L}$-classes and also the $\mathcal{R}$-classes of $J$.  
\end{proof}
 
Now we are ready to apply the results in this section to the transformation semigroups  
$PT_n,T_n,I_n,$ their ideals $K'(n,r),K(n,r),L(n,r)$ and the semigroups of order preserving transformations $PO_n,O_n,POI_n$. If $S$ is one of the semigroups $PO_n, O_n$ or $POI_n$, then $S \setminus \{1\}$ is a subsemigroup of $S$ with a unique maximum $\mathcal{J}$-class \cite{Gomes&Howie:1992,Fernandes:2001}. Moreover, if $S$ is one of $K'(n,r),~K(n,r)$ or $L(n,r)$, then $S \setminus \{1\}=S$ has a unique maximum $\mathcal{J}$-class \cite{Howie&McFadden:1990,Garba:1990}. 
Hence, except for $T_n, PT_n$ and $I_n$ the above semigroups satisfy the hypothesis of Lemma \ref{maximum-j-class}. Thus, our strategy for estimating the depth parameters is different for these semigroups. First, we need to identify the generating sets of minimum size for $T_n, PT_n$, $I_n$. It is well known that, for $n \geq 3$,
 $$\rank(T_n)=3, \rank(I_n)=3, \footnote{Usually by a generating set of an inverse semigroup one means a subset $A \subseteq S$ such that every element in $S$ is a product of elements in $A$ and their inverses. But we do not include inverses here.} \rank(PT_n)=4.$$ But, we need to know exactly what are the generating sets of minimum size. So, we establish the following lemmas for completeness.
 
  \begin{notation}
We use the notation $(i,j)$ for denoting a transposition.
 \end{notation} 
 
\begin{lem}\label{generating-set-Tn}
   Let $ A=\{a,b,c\}\subseteq T_n~(n\geq 3)$ such that $\{a,b\}$ generates
  $S_n$ and $c$ is a function of rank $n-1$. Then, $ A $ is a
  generating set of $ T_n$ with minimum size. Furthermore, all
  generating sets of $ T_n$ with minimum size are of this form.
\end{lem}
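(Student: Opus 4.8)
The plan is to prove both directions. For the first claim, I would show that $A=\{a,b,c\}$ with $\langle a,b\rangle = S_n$ and $\rank(c)=n-1$ generates $T_n$. The standard fact (Howie) is that $T_n = \langle S_n \cup J_{n-1}(T_n)\rangle$, where $J_{n-1}(T_n)$ is the $\mathcal{J}$-class of rank-$(n-1)$ transformations; indeed every transformation of rank $< n$ factors as a product of rank-$(n-1)$ maps, and by composing with permutations one reaches all of them. So it suffices to show $\langle a,b,c\rangle$ contains every rank-$(n-1)$ transformation. Now $c$ has rank $n-1$, so its image omits one point and it identifies exactly one pair $\{i,j\}$; conjugating $c$ by elements of $S_n=\langle a,b\rangle$ (which lie in our generated semigroup) moves the kernel pair and the image arbitrarily, producing every idempotent of rank $n-1$, hence — again multiplying by permutations — every element of $J_{n-1}(T_n)$. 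Combined with $S_n\subseteq\langle a,b\rangle$ this gives $\langle A\rangle = T_n$. Minimality of size is immediate from the cited fact $\rank(T_n)=3$ for $n\geq 3$.

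For the converse, I would take an arbitrary generating set $A=\{x,y,z\}$ of $T_n$ of size $3$ and show it has the stated form. Since $T_n\setminus S_n$ is the ideal of non-permutations, and $S_n$ (being a group, hence not containing products that drop rank back up) cannot be generated using any element outside it — more precisely, $T_n\setminus S_n$ is an ideal and $S_n=T_n\setminus(T_n\setminus S_n)$, but we need the reverse: the maximal subgroup/permutation part forces at least two generators inside $S_n$. The cleanest route: $S_n$ is the group of units of $T_n$; any generating set of $T_n$ must generate $T_n/(\text{non-units})$ appropriately, and since $\rank(S_n)=2$ for $n\geq 3$ while a single element cannot generate $S_n$, at least two of $x,y,z$ must be permutations and together generate $S_n$ (if only the subgroup they generate were proper, the whole generated semigroup would miss some permutation, as no product involving a non-permutation can have rank $n$ unless all factors do). That pins down two of the generators, say $\{x,y\}$ with $\langle x,y\rangle=S_n$. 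The third generator $z$ must then be a non-permutation, and by Lemma~\ref{maximum-j-class}-type reasoning (or directly: $T_n\setminus S_n$ has $J_{n-1}(T_n)$ as its unique maximal $\mathcal{J}$-class, and a rank-$(n-1)$ element cannot be a product of strictly-lower-rank elements), $z$ must have rank exactly $n-1$: if $\rank(z)\leq n-2$ then $\langle x,y,z\rangle$ contains no transformation of rank $n-1$, since products of permutations and things of rank $\leq n-2$ have rank $n$ or $\leq n-2$.

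The main obstacle I anticipate is the rank-counting argument in the converse: carefully justifying that with one non-permutation generator of rank $n-1$ one reaches \emph{all} of $J_{n-1}(T_n)$, and that a generator of rank $\leq n-2$ makes rank-$(n-1)$ elements unreachable. The first point uses that $S_n$ acts transitively (indeed sharply enough) on both the set of partitions of $X_n$ into $n-1$ blocks and the set of $(n-1)$-subsets, so conjugation by $\langle x,y\rangle$ sweeps $z$ across the entire $\mathcal{J}$-class; the second is the observation that if $f$ is a permutation and $g$ has rank $\leq n-2$ then $fg, gf, fgf,\ldots$ all have rank $\leq n-2$, so no product of generators has rank exactly $n-1$ — hence such an $A$ fails to generate $T_n$. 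I would present these as two short paragraphs, citing \cite{Howie:1966} for the factorization $T_n=\langle S_n, J_{n-1}\rangle$ and for $\rank(T_n)=3$, and otherwise argue directly.
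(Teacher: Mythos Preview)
Your approach is essentially the paper's: both invoke Howie's theorem that every element of $T_n\setminus S_n$ is a product of rank-$(n-1)$ idempotents, then show that $c$ together with $S_n$ produces all such idempotents, and for the converse both appeal to the position of $J_{n-1}$ in the $\mathcal{J}$-order (the paper is terser there; your argument that exactly two generators must lie in $S_n$ and generate it is the fuller version of what the paper leaves implicit). The paper carries out the forward step by exhibiting an explicit factorization $\alpha=\rho\tau c\sigma$ for each rank-$(n-1)$ idempotent $\alpha$, whereas you argue via transitivity of the $S_n$-action on kernels and images.

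One correction is needed: \emph{conjugation} $\sigma^{-1}c\sigma$ does not do what you claim. It moves the kernel pair and the omitted image point by the \emph{same} permutation $\sigma$, so you cannot prescribe them independently, and if $c$ is not idempotent no conjugate of $c$ is idempotent either. What you actually want (and what the paper's explicit construction provides) is two-sided multiplication $\sigma_1 c\sigma_2$ with independent $\sigma_1,\sigma_2\in S_n$; since $S_n$ acts transitively on the $\mathcal{R}$-classes (kernel types) and separately on the $\mathcal{L}$-classes (images) of $J_{n-1}$, this sweeps out all of $J_{n-1}$. With that wording fix your argument goes through.
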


\begin{proof}
  Since the symmetric group $ S_n $ cannot be generated by less than
  two elements for $ n\geq 3 $, we need at least three elements to
  generate $ T_n $. Then it suffices to show that such a set $A$
  generates $ T_n $. We know that every element of $T_n \setminus S_n$ is a product of idempotents of rank $n-1$ \cite{Howie:1966}. Therefore, we show that $A$ generates all idempotents of rank $n-1$ (because $ \{a,b,c\}$ already
 generates all permutations). Since $c$ is a function of rank $ n-1 $, there exist exactly two distinct numbers $ 1\leq i<j \leq n $
  such that $ic=jc=l$, and there exists a unique number $1\leq
  k\leq n$ such that $k\not \in \Im(c)$. Suppose that $\alpha$ is an idempotent of rank $n-1$, which implies that $\alpha$ has the form

$$
   \alpha=\left(
   \begin{array}{lllll}
    a_1 & a_2 &  a_3 & \ldots  & a_n\\
    a_1 & a_1 &  a_3 & \ldots  & a_n\\ 
    \end{array} \right),
$$
where
$\{1,2,\ldots,n\}=\{a_1,a_2,\ldots,a_n\}.$
Let  
$ \rho=\left(
    \begin {array} {ccccccccc}
      a_1 & a_2 & \ldots & a_i & \ldots & a_n\\
      1 & 2 & \ldots & i & \ldots & n\\
    \end{array}
  \right),
$
and define permutations $ \tau, \sigma$ as follows. If $i=2$ let
$\tau$ be the cycle $(i,j,1)$ and
\begin{equation}
t\sigma=\left \{
    \begin {array} {rrl}
     a_r & \mbox{if} & t=rc, r\not \in \{j,1,2\}\\
     a_1 & \mbox{if} & t=l\\
     a_2 & \mbox{if} & t=k\\
     a_j & \mbox{if} & t=1c.\\
    \end{array}  \right.
\end{equation}
If $i=1,~j=2$ let $\tau$ be the identity function and let 
\begin{equation}
t\sigma=\left \{
    \begin {array} {rrl}
     a_r & \mbox{if} & t=rc, r\not \in \{1,2\}\\
     a_1 & \mbox{if} & t=l\\
     a_2 & \mbox{if} & t=k.\\
    \end{array}  \right.
\end{equation}
In the remaining cases let $\tau=(i,1)(j,2)$ and let
\begin{equation}
t\sigma=\left \{
    \begin {array} {rrl}
     a_r & \mbox{if} & t=rc, r\not \in \{ i,j,1,2\}\\
     a_1 & \mbox{if} & t=l\\
     a_2 & \mbox{if} & t=k\\
     a_i & \mbox{if} & t=1c\\
     a_j & \mbox{if}& t=2c.\\
    \end{array}  \right.
\end{equation}
Now, it is easy to check that $\alpha= \rho \tau c \sigma.$
 
 The last statement of the lemma follows from the structure of
  $\mathcal{J}$-classes of $T_n$. More precisely, $J_{n-1}
  =\{f\in T_n: \rank(f)=n-1\} $ is a $ \mathcal{J}$-class of $T_n$ which is
  $\mathcal{J}$-above all the other $ \mathcal{J}$-classes except the
  maximum $ \mathcal{J}$-class. Therefore, every generating set of
  $T_n$ must have at least one element in the $ \mathcal{J}$-class
  $J_{n-1}$.
 \end {proof}

\begin{lem}\label{generating-set-PTn}
   Let $ A=\{a,b,c,d\}\subseteq {PT}_n ~(n\geq 3)$ such that $\{a,b,c\}$ generates
  $T_n$ and $d$ is a proper partial function of rank $n-1$. Then $ A $ is a
  generating set of $ {PT}_n$ with minimum size. Furthermore, all
  generating sets of $ {PT}_n$ with minimum size are of this form.
\end{lem}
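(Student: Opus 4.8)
The plan is to mimic the structure of the proof of Lemma~\ref{generating-set-Tn} almost verbatim, one level up in the hierarchy $I_n \subseteq T_n \subseteq PT_n$ (or rather $T_n \subseteq PT_n$ directly). First I would record the lower bound: since $\rank(PT_n)=4$ for $n\geq 3$ (equivalently, $PT_n$ cannot be generated by fewer than $4$ elements because it needs two generators for $S_n$, one of rank $n-1$ total, and one proper partial of rank $n-1$ — the latter forced because the total transformations form a subsemigroup $T_n$ not containing any proper partial map), any generating set has at least $4$ elements. So it remains to prove that a set $A=\{a,b,c,d\}$ of the stated form actually generates $PT_n$. Here $\{a,b,c\}$ already generates all of $T_n$, so the job reduces to showing $A$ generates every proper partial transformation.

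The key step will be to reduce to generating the proper partial \emph{identities} of rank $n-1$, i.e. the maps $\varepsilon_k$ with domain $X_n\setminus\{k\}$ acting as the identity there — or, slightly more conveniently, to generate the idempotent proper partial transformations of rank $n-1$, since every element of $PT_n\setminus T_n$ is a product of such idempotents together with a permutation (this is the partial analogue of Howie's theorem; alternatively one writes a proper partial map of rank $m$ as a composite of a proper partial identity of rank $n-1$ and a total map, then inducts). Concretely, given $d$ a proper partial function of rank $n-1$, it has a domain of size $n-1$, say $X_n\setminus\{k_0\}$, is injective or not; the cleanest route is to first observe that from $d$ and the full group $S_n$ one gets every proper partial injection of rank $n-1$ (conjugate $d$'s "domain hole" and "image hole" to arbitrary positions by pre- and post-composing with permutations, after collapsing any non-injectivity using a rank $n-1$ total map from $\langle a,b,c\rangle$), hence every proper partial identity of rank $n-1$; and then, since $\langle a,b,c\rangle=T_n$ gives all idempotents of rank $\leq n-1$ among total maps and all of $S_n$, one assembles an arbitrary idempotent proper partial transformation of rank $n-1$ as $\pi\,\varepsilon\,\tau\,\varepsilon'$ for suitable permutations and proper partial identities, exactly paralleling the decomposition $\alpha=\rho\tau c\sigma$ in the previous lemma.

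The last statement — that \emph{all} minimum-size generating sets have this form — I would handle by the same $\mathcal{J}$-class argument used for $T_n$. The relevant facts are: $PT_n\setminus\{1\}$ is a subsemigroup whose maximal $\mathcal{J}$-classes sit just below the group of units, namely the $\mathcal{J}$-class $J$ of rank $n-1$ elements; within rank $n-1$, the total transformations of rank $n-1$ themselves form a $\mathcal{J}$-class of $T_n$ lying $\mathcal{J}$-above everything in $T_n$ except $S_n$, and the proper partial maps of rank $n-1$ cannot be obtained as products unless one already has a proper partial map (products of total maps are total). So any generating set must contain at least one element of rank $n$ generating $S_n$ — forcing two such elements since $\rank(S_n)=2$ for $n\geq 3$ — at least one total map of rank $n-1$, and at least one proper partial map of rank $n-1$; with only $4$ elements available this pins down the form. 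Combining with the fact that $\{a,b,c\}$ must then generate $T_n$ (being three elements of $T_n$ with two generating $S_n$ and one of rank $n-1$, i.e.\ of the form in Lemma~\ref{generating-set-Tn}) completes the characterization.

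The main obstacle I anticipate is the explicit verification that $A$ generates every idempotent proper partial transformation of rank $n-1$ — the partial analogue of the somewhat fiddly case analysis ($i=2$; $i=1,j=2$; otherwise) in the proof of Lemma~\ref{generating-set-Tn}. One has to be careful that pre/post-composing $d$ with permutations and with a chosen total rank-$n-1$ map genuinely realizes an arbitrary prescribed domain-hole, image-hole, and (if non-injective) collapsing pair, and that the resulting composite is idempotent; writing this down cleanly, rather than getting lost in index bookkeeping, is where the real work lies. Everything else (the lower bound, the $\mathcal{J}$-class argument for the converse, and the reduction to idempotents) is routine given the previous lemma and the standard structure theory of $PT_n$.
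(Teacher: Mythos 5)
Your outline is correct and follows essentially the same route as the paper: the lower bound of four comes from the fact that $PT_n\setminus T_n$ is an ideal (so products of total maps are total), generation is achieved by conjugating $d$ by permutations and composing to shrink the domain, and the converse is the same $\mathcal{J}$-class argument. The one point worth flagging is that the difficulty you anticipate at the end is illusory: a \emph{proper} partial function of rank $n-1$ has image of size $n-1$ and domain of size at most $n-1$, hence domain of size exactly $n-1$, so $d$ is automatically a partial injection. Consequently there is no non-injectivity to collapse and no need for a Howie-type decomposition into idempotents (an idempotent partial injection is just a partial identity), and the fiddly case analysis of Lemma \ref{generating-set-Tn} has no analogue here. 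The paper simply conjugates an arbitrary $d'$ to the canonical partial shift $d$ with domain $X_n\setminus\{1\}$ via $d=\rho d'\delta$ for permutations $\rho,\delta$, and then writes a proper partial map undefined on exactly $k$ points as $\sigma d^k g$ with $\sigma\in S_n$ and $g\in T_n$; your alternative of first generating all rank-$(n-1)$ partial identities and then composing one with a total extension is an equivalent repackaging of the same computation.
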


\begin{proof}
  By Lemma \ref{generating-set-Tn}, the full transformation semigroup  $ {T}_n $ cannot be generated by less than
 three elements for $ n\geq 3 $. On the other hand, elements of $T_n$ cannot generate any proper partial function so we need at least four elements to
  generate $ {PT}_n $. Then it suffices to show that such a set $A$
  generates ${PT}_n $.  First, we prove this for the particular case in which 
  $$
     d=\left(
     \begin{array}{ccccc}
     1 & 2 & \ldots & n-1 & n\\
     - & 1 & \ldots & n-2 & n-1\\ 
     \end{array} \right).
     $$

Since $\{a,b,c\}$ generates $ T_n $, we must show that, by adding  $d$, we reach all proper partial functions. For $k \geq 1$, let
  $$
     f=\left(
     \begin{array}{cccccccc}
     a_1 & a_2 & \ldots & a_k & a_{k+1} & \ldots & a_n\\
     - & - & \ldots & - & b_{k+1} & \ldots & b_n  \\ 
     \end{array} \right)
     $$
     be a proper partial function which is undefined in exactly $k$ elements.
     Then, it is easy to check that $f=\sigma d^k g$ where $\sigma$ is the permutation
       $$
     \sigma=\left(
     \begin{array}{ccccccccc}
     a_1 & a_2 & \ldots & a_k & a_{k+1} & \ldots & a_n\\
     1 & 2 & \ldots & k & k+1 & \ldots & n\\ 
     \end{array} \right),
     $$
     and $g$ is the function
     $$
      g=\left(
     \begin{array}{ccccccccc}
     1 & 2 & \ldots & n-k & n-k+1 & \ldots & n\\
     b_{k+1}& b_{k+2} & \ldots & b_n & n-k+1 & \ldots & n\\ 
     \end{array} \right).
     $$
     For the general case, let 
     $$ d'=\left(
     \begin{array}{ccccccccc}
     a'_1 & a'_2 & \ldots & a'_n\\
     - & b'_2 & \ldots & b'_n\\ 
     \end{array} \right).
     $$
     where $$\{1,2,\ldots,n\}=\{a'_1,a'_2,\ldots,a'_n\}=\{b'_1,b'_2,\ldots,b'_n\}.$$
   We show that $\{a,b,c,d'\}$ generates $PT_n$. It is enough to show that $d$ is a product of elements in $\{a,b,c,d'\}$. Define the permutations $\rho,\delta$ as follows
      $$ \rho=\left(
     \begin{array}{ccccccccc}
     1 & 2 & \ldots & n\\
     a'_1 & a'_2 & \ldots & a'_n\\
     \end{array} \right),
     $$
     and
       $$ \delta=\left(
     \begin{array}{ccccccccc}
     b'_1 & b'_2 & \ldots & b'_n\\
     n & 1 & \ldots & n-1\\
     \end{array} \right).
     $$
     Now, it is easy to check that $d=\rho d' \delta$.
     
Finally, we show that all generating sets of ${PT}_n$ of minimum size are of the stated form. Let $A$ be any generating set of ${PT}_n$. Since $T_n \subseteq {PT}_n$ and $PT_n \setminus T_n$ is an ideal, then $A$ must contain a generating set of ${T}_n$. On the other hand,  elements of ${T}_n$ cannot generate any proper partial function. Therefore, $A$ must contain at least one proper partial function.  Since all the  proper partial functions of rank $n-1$ are in the $\mathcal{J}$-class which is $\mathcal{J}$-above all $\mathcal{J}$-classes but the maximum $\mathcal{J}$-class, then $A$ must contain at least one partial function of rank $n-1$.       
     \end{proof}
 
\begin{lem}\label{generating-set-In}
   Let $ A=\{a,b,c\}\subseteq I_n ~(n\geq 3)$ be such that $\{a,b\}$ generates
  $S_n$ and $c$ is an element of $J_{n-1}=\{ \alpha \in I_n : rank(\alpha)=n-1\}$. Then $ A $ is a generating set of $ I_n$ with minimum size. Furthermore, all
  generating sets of $ I_n$ with minimum size are of this form.
\end{lem}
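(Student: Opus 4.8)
The plan is to mirror the proof of Lemma~\ref{generating-set-Tn}, replacing the Howie-type idempotent decomposition in $T_n$ by the analogous fact for $I_n$. First I would dispose of the size claim. Since $\rank(\alpha\beta)\le\rank(\beta)$ for all $\alpha,\beta\in I_n$, any product of members of a generating set that involves a non-unit again lies outside $S_n$; hence the units of $I_n$, which form exactly $S_n=\{\alpha\in I_n:\rank(\alpha)=n\}$, can only arise as products of the unit members of the generating set. So if $B$ generates $I_n$ then $B\cap S_n$ generates $S_n$, which forces $|B\cap S_n|\ge 2$ for $n\ge 3$; and $B\ne B\cap S_n$, since otherwise $B$ would generate only $S_n\ne I_n$. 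Thus $\rank(I_n)\ge 3$, and combined with the next step this gives $\rank(I_n)=3$, so any $A=\{a,b,c\}$ of the stated form, if it generates, has minimum size.

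Next comes the crux: showing that $\{a,b,c\}$ generates $I_n$, i.e.\ that $\langle S_n,c\rangle=I_n$. I would first record the auxiliary fact that $I_n=\langle S_n\cup E\rangle$, where $E=\{1_{X_n\setminus\{i\}}:i\in X_n\}$ is the set of idempotents of rank $n-1$: indeed $1_W=\prod_{i\notin W}1_{X_n\setminus\{i\}}$ for every proper subset $W\subsetneq X_n$, and every $\alpha\in I_n$ factors as $\alpha=1_{\dom(\alpha)}\widehat\alpha$ with $\widehat\alpha\in S_n$ an arbitrary permutation extending $\alpha$, the factor $1_{\dom(\alpha)}$ lying in $\langle E\rangle$ whenever $\rank(\alpha)<n$. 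Hence it suffices to produce a single element of $E$ inside $\langle S_n,c\rangle$, since the rest of $E$ then follows by conjugating with permutations ($\tau^{-1}1_Y\tau=1_{Y\tau}$). To manufacture an idempotent of rank $n-1$ from $c$, write $Y=\dom(c)$ and $Z=\Im(c)$ (both of size $n-1$), pick $\sigma\in S_n$ with $Z\sigma=Y$, and observe that $c\sigma$ has domain $Y$ and image $Z\sigma=Y$, so it restricts to a permutation $\pi$ of $Y$; then $(c\sigma)^{\ord(\pi)}=1_Y\in E$. This is the one genuinely non-routine point, and I expect the main care to go into checking the domains and images of these partial-permutation products; the remainder is bookkeeping.

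Finally, for the ``furthermore'' direction I would take an arbitrary minimum-size generating set $A$ of $I_n$ and use the first paragraph to write $A=\{a,b,c\}$ with $\langle a,b\rangle=S_n$ and $c\notin S_n$, so $\rank(c)\le n-1$; it remains to rule out $\rank(c)\le n-2$. If $\rank(c)\le n-2$, then every product of $a,b,c$ either uses only $a,b$, hence has rank $n$, or uses $c$ at least once, hence has rank $\le\rank(c)\le n-2$; so no element of rank exactly $n-1$ could be expressed as a product of $A$, contradicting that $A$ generates $I_n$ (which has such elements for $n\ge 3$). Therefore $c\in J_{n-1}$, giving precisely the stated form. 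This rank argument is the $I_n$-analogue of the observation, used at the end of Lemma~\ref{generating-set-Tn}, that $J_{n-1}$ sits $\mathcal J$-above every $\mathcal J$-class other than the top one.
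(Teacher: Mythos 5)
Your proof is correct, but the key generation step takes a genuinely different route from the paper. The paper quotes the result of Gomes and Howie that $\{a,b,c,c^{-1}\}$ generates $I_n$ and reduces everything to exhibiting $c^{-1}$ as a product of $a,b,c$, via the explicit identity $\alpha c\theta\alpha\theta=c^{-1}$ (where $\theta\in S_n$ extends $c^{-1}$ and $\alpha$ is a suitable transposition or the identity). You instead prove from scratch that $I_n=\langle S_n\cup E\rangle$ for $E$ the set of rank-$(n-1)$ partial identities, using $1_W=\prod_{i\notin W}1_{X_n\setminus\{i\}}$ and the factorization $\alpha=1_{\dom(\alpha)}\widehat\alpha$, and then manufacture one element of $E$ as $(c\sigma)^{\ord(\pi)}$ and the rest by conjugation; this is the exact analogue of the Howie idempotent decomposition used for $T_n$ in Lemma \ref{generating-set-Tn}, and all the domain/image bookkeeping you flag does check out. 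What your approach buys is self-containedness and structural parallelism with the $T_n$ case; what the paper's buys is brevity, at the cost of an external citation. Your treatment of the size bound and of the ``furthermore'' direction (ruling out $\rank(c)\le n-2$ because every product involving $c$ would then have rank at most $n-2$) is essentially the same argument as the paper's, phrased via ranks rather than via $J_{n-1}$ being $\mathcal{J}$-above every non-maximum $\mathcal{J}$-class.
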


\begin{proof}
We know that $\{a,b,c,c^{-1}\}$ is a generating set of $I_n$ \cite{Gomes&Howie:1987}. We only need to show that $c^{-1} \in \langle a,b,c\rangle.$ Let $\dom(c)=X_n \setminus \{i\}, \Im(c)=X_n \setminus \{j\}$.  For $i \not = j$, let $\alpha=(i,j)$ be a transposition and for $i=j$, let $\alpha$ be the identity function. We may complete $c^{-1}$ to an element $\theta$ of $S_n$ by defining $j \theta =i$. It is easy  to check that $\alpha c \theta \alpha \theta= c^{-1}$.
    
For the second statement, let $A$ be any generating set of $I_n$. Since $S_n$ is the maximum $\mathcal{J}$-class of $I_n$, $A$ must contain a generating set of $S_n$, which has at least $2$ elements for $n \geq 3$.
On the other hand, since $S_n$ is a group, the elements of $S_n$ are not enough to generate the whole semigroup $I_n$. So, we need at least one element in $I_n \setminus S_n$. Since $J_{n-1}$ is $\mathcal{J}$-above all
 $\mathcal{J}$-classes but the maximum $\mathcal{J}$-class, then $A$ must contain at least one element in $J_{n-1}$.  
\end{proof}

Part of the following corollary is immediate by Theorem \ref{lower-bound-N'}.     

\begin{cor}\label{N-transformation-cor1}
For $n \geq 3$,
\begin{align*}
 N'(T_n)&=N(T_n)=n-1,\\
  N'(PT_n)&=N(PT_n)=n,\\
   N'(I_n)&=N(I_n)=n.
\end{align*}
\end{cor}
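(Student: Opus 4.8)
The plan is to sandwich each of these numbers between a lower bound furnished by Theorem \ref{lower-bound-N'} and an upper bound obtained from one explicitly chosen generating set of minimum size. Since $N'(S)\le N(S)$ always holds, it suffices, for each of $S=T_n,PT_n,I_n$, to prove that $N'(S)$ is at least the claimed value and to exhibit one generating set $A$ of minimum size with $N(S,A)$ at most that value.

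For the lower bounds I would first compute the invariants entering Theorem \ref{lower-bound-N'}. By Lemmas \ref{generating-set-Tn}, \ref{generating-set-PTn} and \ref{generating-set-In}, every generating set of minimum size of $T_n$, $PT_n$ or $I_n$ contains a transformation of rank $n-1$ but none of smaller rank, so by Corollary \ref{r-minimal} we have $r(T_n)=r(PT_n)=r(I_n)=n-1$. Moreover $\ker(T_n)$ is the set of constant maps, so $t(T_n)=1$, while $\ker(PT_n)=\ker(I_n)=\{\varnothing\}$ (the empty partial map is a zero), so $t(PT_n)=t(I_n)=0$. None of these semigroups is a group, and each satisfies $r(S)\le n-1$, so Theorem \ref{lower-bound-N'} applies and yields $N'(T_n)\ge\lceil (n-1)/1\rceil=n-1$ and $N'(PT_n),N'(I_n)\ge\lceil n/1\rceil=n$.

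For the upper bounds the key idea is to choose the rank-$(n-1)$ generator to be a ``shift'' rather than an idempotent, so that its successive powers lose one unit of rank at each step until they land in the kernel. Fix $a=(1,2)$ and $b=(1,2,\dots,n)$, so that $\{a,b\}$ generates $S_n$. For $T_n$ let $c$ be the total map with $(i)c=i+1$ for $i<n$ and $(n)c=n$; then $\Im(c^{k})=\{k+1,\dots,n\}$, so $c^{\,n-1}$ is the constant map with value $n$, which belongs to $\ker(T_n)$. By Lemma \ref{generating-set-Tn}, $\{a,b,c\}$ is a generating set of $T_n$ of minimum size, whence $N(T_n)\le N(T_n,\{a,b,c\})\le n-1$. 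For $I_n$ let $c$ be the partial injection with domain $\{1,\dots,n-1\}$ and $(i)c=i+1$; then $\dom(c^{k})=\{1,\dots,n-k\}$, so $c^{\,n}=\varnothing\in\ker(I_n)$, and $\{a,b,c\}$ is a generating set of $I_n$ of minimum size by Lemma \ref{generating-set-In}, so $N(I_n)\le n$. For $PT_n$ adjoin to a generating set $\{a,b,c\}$ of $T_n$ of minimum size the proper partial map $d$ with domain $\{1,\dots,n-1\}$ and $(i)d=i+1$; by Lemma \ref{generating-set-PTn}, $\{a,b,c,d\}$ is a generating set of $PT_n$ of minimum size, and $d^{\,n}=\varnothing\in\ker(PT_n)$, so $N(PT_n)\le n$. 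Combining these upper bounds with the lower bounds and the inequality $N'\le N$ gives equality in all three lines.

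I expect the only step requiring genuine care to be the upper bound: the natural first guess of taking an idempotent of rank $n-1$ as the extra generator fails outright, since its powers stabilise after one step and never reach the kernel. One must therefore verify both that the chosen shift maps meet the hypotheses of Lemmas \ref{generating-set-Tn}--\ref{generating-set-In} (so that the displayed sets really are generating sets of minimum size) and that their ranks, respectively domains, behave under iteration as claimed; both checks are routine once the maps are written out explicitly.
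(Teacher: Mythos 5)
Your proposal is correct and follows essentially the same route as the paper: the lower bounds come from Theorem \ref{lower-bound-N'} with $r(S)=n-1$ and $t(T_n)=1$, $t(PT_n)=t(I_n)=0$, and the upper bounds come from choosing the rank-$(n-1)$ generator in a minimum-size generating set (as characterised in Lemmas \ref{generating-set-Tn}--\ref{generating-set-In}) to be a shift whose powers descend to the kernel. The only differences are cosmetic (your shifts go in the opposite direction from the paper's, and you make explicit the appeal to Corollary \ref{r-minimal} that the paper leaves implicit).
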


\begin{proof}
 Since
 \begin{align*}
 &t(T_n)= 1,\\
  &t(I_n)=t(PT_n)=0,\\
  &r(T_n)=r(PT_n)=r(I_n)=n-1,
\end{align*}
then by Theorem \ref{lower-bound-N'},
\begin{align*}
N'(T_n) \geq n-1, ~N'(PT_n) \geq n , ~N'(I_n) \geq n.
\end{align*}
What is left is to show that
\begin{align*}
N(T_n) \leq n-1, ~N(PT_n) \leq n , ~N(I_n) \leq n.
\end{align*}
We do this by showing that each of the above semigroups has a generating set $A$ of minimum size for which $A$-depth is at most the proposed upper bound. 
By Lemma \ref{generating-set-PTn}, the rank of $PT_n$ is four and the set $A=\{\alpha,\beta,\theta,\gamma\}$ is a generating set of $T_n$ provided that $\{\alpha,\beta\}$ is a generating set of the symmetric group  $S_n$, $\theta$ is a transformation of rank $n-1$, and $\gamma$ is a proper partial transformation of rank $n-1$. If we choose $\gamma $ to be the partial transformation
$$ \gamma=\begin{pmatrix} 
1&2&3&\ldots&n\\
-&1&2&\ldots&n-1\\
\end{pmatrix},$$
then $\gamma^n$ is the empty map, which lies in the minimum ideal  of $T_n$. This shows that $N(PT_n,A) \leq n.$ 
With the above notation and by Lemma \ref{generating-set-In}, the set $A'=\{\alpha,\beta,\gamma\}$ is a generating set of $I_n$ of minimum size and the above argument gives $N(I_n, A') \leq n$. 
For $T_n $, again, with the above notation and by Lemma \ref{generating-set-Tn} the set $A=\{\alpha,\beta,\theta\}$ is a generating set of minimum size. If we choose $\theta $ to be the transformation
$$ \theta=\begin{pmatrix} 
1&2&3&\ldots&n\\
1&1&2&\ldots&n-1\\
\end{pmatrix},$$
then $\theta^{n-1}$ is the constant map, which lies in the minimum ideal  of $T_n$. Hence, we have $N(T_n,A) \leq n-1$.  
\end{proof}

Now we show that $N=N'$ for the remaining semigroups, and indeed $N'=N=M=M'$ (except for the semigroup $O_n$).
 \begin{prop}\label{N-transformation-cor3}
For $n \geq 3$, 
 \begin{align*}
 &N'(PO_n)=N(PO_n)=M(PO_n)=M'(PO_n)=n,\\
&N'(O_n)=N(O_n)=n-1,\\
   &N'(POI_n)=N(POI_n)=M(POI_n)=M'(POI_n)=n.
 \end{align*}
 \end{prop}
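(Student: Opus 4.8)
The plan is to derive the three lower bounds uniformly from Theorem~\ref{lower-bound-N'}, and then to prove the upper bounds in two groups: the partial-transformation semigroups $PO_n$ and $POI_n$, where a single argument shows that \emph{every} generating set already has $A$-depth exactly $n$, and the full-transformation semigroup $O_n$, where it is enough to produce one convenient minimal generating set.

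For the lower bounds I would first record the two numbers that feed into Theorem~\ref{lower-bound-N'}. For each $S\in\{PO_n,O_n,POI_n\}$ we have $S\setminus\{1\}=S$, with unique maximal $\mathcal J$-class $J:=J_{n-1}(S)$, and every element of $S$ has rank at most $n-1$ (rank $n$ would force the identity, which is excluded). Since any transformation of rank at most $n-2$ in $S$ is a product of transformations of rank $n-1$ (the order-preserving analogue of the fact used in Corollary~\ref{N-transformation-cor1}; cf.\ \cite{Gomes&Howie:1992,Fernandes:2001}), Lemma~\ref{minimal-generating-set} forces every minimal generating set into $J$, and with Corollary~\ref{r-minimal} this gives $r(PO_n)=r(O_n)=r(POI_n)=n-1$. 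Moreover the empty map (the zero of $PT_n$) belongs to $PO_n$ and to $POI_n$, so $t(PO_n)=t(POI_n)=0$, whereas the minimum ideal of $O_n$ consists of its $n$ constant maps, so $t(O_n)=1$. As none of the three semigroups is a group, Theorem~\ref{lower-bound-N'} now yields $N'(PO_n)\ge n$, $N'(POI_n)\ge n$ and $N'(O_n)\ge n-1$.

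For the upper bound when $S\in\{PO_n,POI_n\}$ I would prove $N(S,A)\le n$ for an \emph{arbitrary} generating set $A$; since $N'\le N\le M\le M'$, this together with the lower bound gives $N'(S)=N(S)=M(S)=M'(S)=n$. Because $S\setminus\{1\}=S$ has the unique maximal $\mathcal J$-class $J$, Lemma~\ref{maximum-j-class} shows $A$ meets every $\mathcal R$-class of $J$. Among these, for each $m\in X_n$, there is exactly one $\mathcal R$-class consisting of proper partial transformations of rank $n-1$ with domain $X_n\setminus\{m\}$ (the $n$ such $\mathcal R$-classes listed in the preliminaries, indexed by the deleted point; for $POI_n$ \emph{every} element of $J$ is of this kind). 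Pick $\gamma_m\in A$ in that $\mathcal R$-class; it is injective, its domain having size equal to its rank. Now start from any $a_1\in A$, so $|\Im(a_1)|\le n-1$, and build a word greedily: given $a_1\cdots a_j$ with nonempty image, choose $m_{j+1}\in\Im(a_1\cdots a_j)$ and set $a_{j+1}:=\gamma_{m_{j+1}}$; then $\Im(a_1\cdots a_{j+1})=\bigl(\Im(a_1\cdots a_j)\setminus\{m_{j+1}\}\bigr)\gamma_{m_{j+1}}$ has exactly one fewer element, by injectivity of $\gamma_{m_{j+1}}$. After at most $n$ factors the image is empty, so $a_1\cdots a_k=\varnothing\in\ker(S)$ for some $k\le n$, whence $N(S,A)\le n$.

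For $O_n$ all that remains is $N(O_n)\le n-1$, i.e.\ that some minimal generating set has a constant map as a product of at most $n-1$ of its elements. By \cite{Gomes&Howie:1992}, $\rank(O_n)=n$, and from the description of its minimal generating sets there one may choose such a set $A$ containing the rank-$(n-1)$ transformation $\theta$ with $k\theta=\max\{1,k-1\}$; a short computation gives $\theta^{n-1}=\overline 1$, the constant map with value $1$, which lies in $\ker(O_n)$, so $N(O_n,A)\le n-1$. Combined with $N'(O_n)\le N(O_n)$ and the lower bound, this yields $N'(O_n)=N(O_n)=n-1$ (and makes plain why no claim about $M(O_n),M'(O_n)$ is made here: the word above is adapted to a particular $A$). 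The routine pieces are the egg-box bookkeeping for $J_{n-1}(PO_n)$, already supplied in the preliminaries, and the verification that $\theta^{n-1}$ is constant; the step I would be most careful about is the appeal to \cite{Gomes&Howie:1992} for a minimal generating set of $O_n$ containing a transformation one of whose powers reaches a constant map, since — unlike the $PO_n$ and $POI_n$ cases — the bound for $O_n$ is genuinely not uniform over all generating sets and really does require choosing $A$ well.
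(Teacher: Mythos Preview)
Your proposal is correct and follows essentially the same approach as the paper: the lower bounds via Theorem~\ref{lower-bound-N'} with $r=n-1$ and $t\in\{0,1\}$, the $O_n$ upper bound via the specific Gomes--Howie generating set containing the shift $\beta$ with $\beta^{n-1}$ constant, and the $PO_n,POI_n$ upper bounds via Lemma~\ref{maximum-j-class} to cover the $n$ ``partial'' $\mathcal R$-classes of $J_{n-1}$ and then a greedy length-$n$ product landing on the empty map. The only cosmetic difference is bookkeeping: the paper kills the points $1,2,\ldots,n$ one at a time by arranging $(i{+}1)f_1\cdots f_i\notin\dom(f_{i+1})$, whereas you track $|\Im(a_1\cdots a_j)|$ and use injectivity of the chosen $\gamma_m$ to drop it by exactly one per step; both arguments are equivalent and valid for an arbitrary (not just minimal) generating set.
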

 \begin{proof}
 We start with the semigroup $PO_n$. We know that $PO_n $ is generated by the $\mathcal{J}$-class $J_{n-1}$ consisting of transformations or partial transformations of rank $n-1$ \cite{Gomes&Howie:1992}, and the empty transformation is the zero of $PO_n$. Hence, we have $r(PO_n)=n-1$ and $t(PO_n)=0$. So Theorem \ref{lower-bound-N'} implies that $N'(PO_n) \geq n$. It remains to show that $M'(PO_n) \leq n$. Let $A$ be a minimal generating set of $PO_n$. By Lemma \ref{maximum-j-class}, $A$ intersects each $\mathcal{R}$-class of $J_{n-1}$. Hence, we can find proper partial transformations $f_1,f_2,\ldots,f_n \in A$ such that $1 \not \in \dom(f_1)$ and for $1 \leq i \leq n-1,~( i+1)f_1f_2\ldots f_i \not \in \dom(f_{i+1})$. It is easy to see that $f_1f_2\ldots f_n$ is the empty function. This shows that $N(PO_n,A) \leq n$. Since $A$ is an arbitrary minimal generating set, then $M'(PO_n) \leq n.$
 
 The next semigroup in the statement of the proposition is the semigroup $O_n$. Since the maximum $\mathcal{J}$-class $J_{n-1}$ generates $O_n$ \cite{Gomes&Howie:1992}, $r(O_n)=n-1$. By Theorem \ref{lower-bound-N'}, $N'(O_n) \geq n-1$. We show that $N(O_n) \leq n-1$. It is enough to show that $N(O_n,A) \leq n-1$ for some generating set $A$ of minimum size. For $1 \leq  i \leq n-1$ let 
 \begin{equation*}
\alpha_i= \left (\begin{array}{cccccccccc} 
1&2& 3&\ldots &i&i+1& \ldots n\\
1&2& 3&\ldots &i+1&i+1&\ldots n
\end{array} \right )
 \end{equation*}
 and\
 \begin{equation*}
\beta= \left (\begin{array}{ccccccccccc} 
1&2& 3&\ldots &i&i+1& n-1&\ldots n\\
1&1& 2&\ldots &i-1&i&n-2&\ldots n-1
\end{array} \right ).
 \end{equation*}
 The set $\{ \alpha_1,\alpha_2,\ldots,\alpha_{n-1},\beta \} $ is a generating set of $O_n$ of minimum size as has been proved in \cite{Gomes&Howie:1992}. On the other hand,  
$\beta^{n-1}$ is a constant transformation. This shows that $N(O_n,A) \leq n-1$, and so $N(O_n) \leq n-1$.
 
We now apply this argument again, for $POI_n$.  Reasoning as in the previous cases, we obtain $N'(POI_n) \geq n$ \cite{Fernandes:2001}. We show that $N(POI_n,A) \leq n$ for every minimal generating set $A$. Again, $A$ intersects each $\mathcal{R}$-class of $J_{n-1}$. Hence, we can find proper partial transformations $f_1,f_2,\ldots,f_n \in A$ such that $1 \not \in \dom(f_1)$ and for $1 \leq i \leq n-1,~(i+1)f_1f_2\ldots f_i \not \in \dom(f_{i+1})$. It is easy to see that $f_1f_2\ldots f_n$ is the empty function. Hence, we have $N(POI_n,A) \leq n$, which completes the proof. 
 \end{proof}
 We use the following lemmas to prove Proposition \ref{N-transformation-cor2}. 
 \begin{lem}\label{L(n,r)}
 The transformation semigroup $L(n,r)$ is generated by its maximum $\mathcal{J}$-class. 
 \end{lem}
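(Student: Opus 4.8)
The plan is to show that $L(n,r)$ — the semigroup of injective partial transformations of $X_n$ of rank at most $r$ — is generated by $J_r = \{\alpha \in I_n : \rank(\alpha) = r\}$, its top $\mathcal{J}$-class. Since $L(n,r)$ is an ideal of $I_n$ and every element of $L(n,r)$ has rank $\le r$, it suffices to prove that every $\alpha \in L(n,r)$ with $\rank(\alpha) = s \le r$ can be written as a product of elements of $J_r$. The key structural fact I would use is that $J_r$ is the unique maximal $\mathcal{J}$-class of $L(n,r)$, hence lies $\mathcal{J}$-above every element of the semigroup, so at least it is \emph{necessary} that generators come from there; the content is that they suffice.

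First I would handle the case of dropping rank by exactly one, showing $J_{r-1} \subseteq \langle J_r \rangle$, i.e. every injective partial map of rank $r-1$ is a product of two (or finitely many) elements of $J_r$. Concretely, given $\beta$ with $\dom(\beta) = \{x_1,\dots,x_{r-1}\}$ and $\Im(\beta) = \{y_1,\dots,y_{r-1}\}$, pick one extra point $x_r \notin \dom(\beta)$ and one extra point $z \notin \Im(\beta)$, and factor $\beta = \gamma\,\delta$ where $\gamma \in J_r$ sends $x_i \mapsto y_i$ ($i \le r-1$) and $x_r \mapsto z$, and $\delta \in J_r$ is a rank-$r$ injective partial map that restricts to the identity on $\{y_1,\dots,y_{r-1}\}$ and kills $z$ by not having $z$ in its domain (extending its domain by any one point disjoint from its current image so that it genuinely has rank $r$, not $r-1$). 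One checks $\gamma\delta = \beta$ directly. Iterating this (or using induction on $r - s$) then gives $\langle J_r\rangle \supseteq J_{r-1} \supseteq \cdots \supseteq J_0$, hence all of $L(n,r)$.

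Alternatively — and this is probably cleaner — I would invoke the known fact (e.g. from \cite{Gomes&Howie:1987} or the general theory of $I_n$) that $I_n$ itself is generated by $S_n$ together with a single idempotent of rank $n-1$, and more precisely that the principal ideal generated by $J_r$ in $I_n$ is exactly $L(n,r)$ and is idempotent-generated within that ideal; since $L(n,r)$ is a regular semigroup in which $J_r$ is the top $\mathcal{J}$-class, every $\mathcal{D}$-class below $J_r$ is reached by products within $\langle J_r\rangle$ because one can move an idempotent of rank $r$ down to any lower rank by composing with suitable elements of $J_r$ whose images and domains are chosen to force the required rank drop. Either route reduces to the same elementary check about composing injective partial maps.

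The main obstacle is purely bookkeeping: when building the rank-$r$ factors $\gamma, \delta$ one must be careful that each genuinely lies in $J_r$ (rank exactly $r$, not accidentally $r-1$), which forces a careful choice of the auxiliary points added to domains and images and a verification that the composite recovers $\beta$ on the nose, including that the composite's domain is exactly $\dom(\beta)$. There is no conceptual difficulty — the only risk is an off-by-one in the rank of an intermediate factor — so I would state the one-step reduction $J_{r-1}\subseteq\langle J_r\rangle$ as the crux and let induction on $r$ finish it.
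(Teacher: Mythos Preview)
Your proposal is correct and follows essentially the same approach as the paper: factor an element $\beta$ of rank $s<r$ as $\beta=\gamma\delta$ where $\gamma$ extends $\beta$ by one extra point (so $\gamma\in J_{s+1}$) and $\delta\in J_r$ fixes $\Im(\beta)$ pointwise while excluding the extra image point from its domain, then induct. The paper states the general step $J_k\subseteq J_{k+1}J_r$ explicitly (rather than just the case $k=r-1$) and concludes $J_k\subseteq J_r^{\,r-k+1}$, but the construction and the induction are the same as yours; only your chain notation $\langle J_r\rangle\supseteq J_{r-1}\supseteq\cdots\supseteq J_0$ is sloppy, since the $J_k$ are disjoint, not nested.
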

 \begin{proof}
  For $0 \leq k \leq r$ denote 
$$J_k:=\{ \alpha \in L(n,r): \rank(\alpha)=k\}.$$  It is easy to see that $J_k$ is a $\mathcal{J}$-class of $L(n,r)$. Now, we prove that the maximum $\mathcal{J}$-class 
$J_r$ generates $L(n,r)$. For $k < r$, consider an arbitrary $ \beta \in J_k$. Suppose that
\begin{equation*}
\beta=\left(
\begin{array}{ccccc}
a_1 & a_2 & \ldots &a_k\\
b_1& b_2 & \ldots &b_k
\end{array}
\right).
\end{equation*}
Choose $a_{k+1} \not \in \dom(\beta)$ and $b_{k+1} \not \in \Im(\beta)$ and let
 \begin{equation*}
 \beta'=\left(
\begin{array}{ccccc}
a_1 & a_2 & \ldots &a_k & a_{k+1}\\
b_1& b_2 & \ldots &b_k & b_{k+1} 
\end{array}
\right).
\end{equation*}
  Now, choose $ f \in J_r$ such that $f(b_i)=b_i ~\mbox{for}~ 1 \leq i \leq k$ and $b_{k+1} \not \in \dom(f)$. It is easy to see that $\beta=\beta'f$. Therefore, $J_k \subseteq J_{k+1}J_r$ for $0 \leq k \leq r-1$. It follows that $J_k \subseteq J_r^{r-k+1}.$ Hence, $L(n,r)$ is generated by $J_r$.
 \end{proof}

\begin{lem}\label{partial-permutation}
The $\mathcal{R}$-class in $K'(n,r)$ of
a partial permutation consists
only of partial permutations. Moreover,
 two partial
permutations which are $\mathcal{R}$-equivalent
in $K'(n,r)$ are also
$\mathcal{R}$-equivalent in $L(n,r)$.
\end{lem}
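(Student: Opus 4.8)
My plan is to avoid quoting the standard description of Green's relations in $PT_n$ or $I_n$ — which is risky here, since $\mathcal{R}$ in a subsemigroup need not coincide with the restriction of $\mathcal{R}$ from a larger semigroup — and instead to argue with explicit right factors taken inside $K'(n,r)$ and $L(n,r)$ (allowing, as usual, an adjoined identity). Throughout I use the composition convention $x(\alpha\beta)=(x\alpha)\beta$ from Definition~\ref{transformation:defi2}, so that $\dom(\alpha\beta)\subseteq\dom(\alpha)$, and I recall that a partial permutation is exactly an injective element of $PT_n$.

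The first step is the observation that $\mathcal{R}$-related elements of $K'(n,r)$ share a domain: if $\alpha=\beta\gamma$ and $\beta=\alpha\delta$ for suitable $\gamma,\delta$ (possibly the identity map), then either one of $\gamma,\delta$ is the adjoined identity, in which case $\alpha=\beta$, or else $\dom(\alpha)\subseteq\dom(\beta)\subseteq\dom(\alpha)$. Granting this, the first assertion is immediate: if $\alpha$ is a partial permutation and $\beta\,\mathcal{R}\,\alpha$ in $K'(n,r)$, write $\alpha=\beta\gamma$; then for $x,y\in\dom(\beta)=\dom(\alpha)$ the equality $x\beta=y\beta$ gives $x\alpha=(x\beta)\gamma=(y\beta)\gamma=y\alpha$, hence $x=y$ by injectivity of $\alpha$, so $\beta$ is injective, i.e. a partial permutation.

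For the ``moreover'' part, suppose $\alpha,\beta$ are partial permutations with $\alpha\,\mathcal{R}\,\beta$ in $K'(n,r)$. By the first step $Y:=\dom(\alpha)=\dom(\beta)$, and since $\alpha,\beta$ are injective of rank $|Y|\le r$ they lie in $L(n,r)$. If $\alpha=\beta$ we are done; otherwise define $\gamma$ with $\dom(\gamma)=\Im(\beta)$ by $(x\beta)\gamma:=x\alpha$ for $x\in Y$. This is well defined and injective because $\beta$ and $\alpha$ are injective, so $\gamma$ is a partial permutation of rank $|Y|\le r$, i.e. $\gamma\in L(n,r)$, and one checks $\beta\gamma=\alpha$ (the domains agree since $x\beta\in\Im(\beta)=\dom(\gamma)$ for every $x\in Y$). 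Symmetrically, $(x\alpha)\delta:=x\beta$ defines $\delta\in L(n,r)$ with $\alpha\delta=\beta$, so $\alpha\,\mathcal{R}\,\beta$ in $L(n,r)$. The only delicate point is the bookkeeping with the adjoined identities when converting the ideal memberships into concrete factors, together with verifying that the constructed $\gamma,\delta$ genuinely have rank at most $r$ (so that they lie in $L(n,r)$, not merely in $I_n$); neither presents a real obstacle.
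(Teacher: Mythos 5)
Your proof is correct and follows essentially the same route as the paper's: both deduce $\dom(\alpha)=\dom(\beta)$ from the two factorizations, conclude that the $\mathcal{R}$-related element is again a partial permutation, and then produce injective right factors of rank at most $r$ by restricting to the image. The only cosmetic difference is that you verify injectivity of $\beta$ directly by composing with $\gamma$, whereas the paper counts ($\rank(f)=\rank(g)$ and $|\dom(g)|=\rank(g)$); both are fine.
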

\begin{proof}
Let $f,g \in K'(n,r)$. Suppose that $f \mathcal{R} g$ and $f$ is a partial permutation. There exist $h,k \in K'(n,r)$ such that $f=gh$ and $g=fk$. First we  show that $g$ is a partial permutation. Since  $f$ is equal to $gh$, then $\dom(f) \subseteq \dom(g)$ and $\rank(f) \leq \rank(g)$. Since  $g$ is equal to $fk$, then  $\dom(g) \subseteq \dom(f)$ and $\rank(g) \leq \rank(f)$. Hence, we have $\dom(f)=\dom(g)$ and $\rank(f)=\rank(g)$. Since $f$ is a partial permutation, then $|\dom(f)|=\rank(f)$. It follows that $|\dom(g)|=\rank(g)$, hence $g$ is a partial permutation. Now, define the partial permutations $h',k'$ as follows. Let $\dom(h')=\Im(g)$ and $xh=xh'$ for every $x \in \Im(g)$. Let $\dom(k')=\Im(f)$ and $xk=xk'$ for every $x \in \Im(f)$. Hence, we have $f=gh'$ and $g=fk'$ and $h',k' \in L(n,r)$. This shows that $f,g$ are $\mathcal{R}$-equivalent in $L(n,r)$.
\end{proof}

\begin{prop}\label{N-transformation-cor2}
For every $n >1$ and $1 \leq r \leq n-1$, 
\begin{align*}
N'(K(n,r))&=N(K(n,r))=M(K(n,r))=M'(K(n,r))=\left\lceil \frac{n-1}{n-r}\right\rceil,\\ 
N'(K'(n,r))&=N(K'(n,r))=M(K'(n,r))=M'(K(n,r))=\left\lceil \frac{n}{n-r}\right\rceil,\\
N'(L(n,r))&=N(L(n,r))=M(L(n,r))=M'(L(n,r))=\left\lceil \frac{n}{n-r}\right\rceil.
\end{align*}
\end{prop}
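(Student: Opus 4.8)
The plan is to prove both bounds in the chain $N'(S)\le N(S)\le M(S)\le M'(S)$ and squeeze all four parameters between a common lower bound, coming from Theorem~\ref{lower-bound-N'}, and a common upper bound on $M'(S)$; since these two bounds will coincide, equality of all four parameters follows at once.

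\emph{Lower bounds.} For each $S\in\{K(n,r),K'(n,r),L(n,r)\}$ the set of transformations of rank $<r$ is a proper ideal of $S$, so by Lemma~\ref{minimal-generating-set} every minimal generating set avoids it; on the other hand $S$ is generated by its top $\mathcal{J}$-class $J_r$ of rank-$r$ elements (for $K(n,r)$ and $K'(n,r)$ by Howie--McFadden and Garba \cite{Howie&McFadden:1990,Garba:1990}, and for $L(n,r)$ by Lemma~\ref{L(n,r)}), so $J_r$ contains a minimal generating set of $S$; hence $r(S)=r\le n-1$. Moreover $t(K(n,r))=1$, since $\ker(K(n,r))$ consists of the constant maps, whereas $t(K'(n,r))=t(L(n,r))=0$, since the empty map is the zero of these two semigroups. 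As none of the three is a group, Theorem~\ref{lower-bound-N'} gives $N'(K(n,r))\ge\lceil(n-1)/(n-r)\rceil$ and $N'(K'(n,r)),N'(L(n,r))\ge\lceil n/(n-r)\rceil$.

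\emph{Upper bounds.} Since $M'(S)$ is the maximum of $N(S,A)$ over \emph{all} generating sets $A$, I fix an arbitrary generating set $A$ and bound $N(S,A)$. Each of the three semigroups has $J_r$ as its unique maximum $\mathcal{J}$-class, so Lemma~\ref{maximum-j-class} shows that $A$ meets every $\mathcal{R}$-class of $J_r$. For $L(n,r)$ these $\mathcal{R}$-classes are indexed by the $r$-subsets of $X_n$, the possible domains of a rank-$r$ partial permutation, so $A$ contains a partial permutation with any prescribed domain of size $r$; for $K'(n,r)$ the same conclusion follows by combining Lemma~\ref{partial-permutation} with Lemma~\ref{maximum-j-class}; and for $K(n,r)$ the $\mathcal{R}$-classes of $J_r$ are indexed by the partitions of $X_n$ into $r$ blocks, the possible kernels, so $A$ contains a rank-$r$ transformation realizing any prescribed such kernel. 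I would then build a product of elements of $A\cap J_r$ greedily: starting from a single rank-$r$ generator, and given a running product of image $E$ with $|E|=m$, pick the next rank-$r$ generator so as to kill as much rank as possible --- for $L(n,r)$ and $K'(n,r)$ a partial permutation whose domain meets $E$ in only $\max(0,m-(n-r))$ points, and for $K(n,r)$ a transformation whose kernel confines $E$ to $\max(1,m-(n-r))$ blocks. After $k$ factors the rank is at most $\max(t(S),n-k(n-r))$, so it equals $t(S)$ --- that is, the product lies in $\ker(S)$ --- as soon as $k=\lceil(n-t(S))/(n-r)\rceil$. This yields $N(S,A)\le\lceil(n-t(S))/(n-r)\rceil$ for every $A$, hence the matching upper bound on $M'(S)$, and the chain $N'\le N\le M\le M'$ collapses everything to the stated values.

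\emph{Main obstacle.} The delicate point is the combinatorial core of the greedy step: that the extremal rank drop $n-r$, which is the equality case of Lemma~\ref{rank-lower-bound1}, is always realizable with the constrained generators an arbitrary $A$ provides. Concretely I must check that for any $m$-subset $E$ of $X_n$ there is an $r$-subset of $X_n$ meeting $E$ in exactly $\max(0,m-(n-r))$ points (immediate) and a partition of $X_n$ into $r$ blocks meeting $E$ in exactly $\max(1,m-(n-r))$ blocks (a short count: one block can absorb up to $n-r+1$ points of $E$, which then forces the remaining $r-1$ blocks to be singletons), and that post-multiplying the running product by the corresponding generator produces an image of precisely the predicted cardinality. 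Tightening the induction on the number of factors, and the bookkeeping of exactly when the running rank reaches $t(S)$ so that the product genuinely belongs to $\ker(S)$, is the only place where real care is needed; everything else is an assembly of Theorem~\ref{lower-bound-N'}, Lemmas~\ref{maximum-j-class}, \ref{partial-permutation}, \ref{L(n,r)} and \ref{rank-lower-bound1}, and the cited generating-set results.
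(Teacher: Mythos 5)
Your proposal is correct and follows essentially the same route as the paper: the lower bounds come from Theorem \ref{lower-bound-N'} after noting that each semigroup is generated by its top $\mathcal{J}$-class (so $r(S)=r$) and computing $t(S)$, and the matching upper bound on $M'$ comes from Lemma \ref{maximum-j-class} together with a greedy product of generators achieving the maximal rank drop $n-r$ at each step, with $K'(n,r)$ reduced to the partial-permutation case via Lemma \ref{partial-permutation}. The combinatorial verifications you flag as the main obstacle (existence of the extremal $r$-subset, resp.\ $r$-block partition, meeting the current image in the minimal number of points, resp.\ blocks) are exactly the content of the paper's inductive rank computation, and your counts are right.
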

\begin{proof}
To see that the semigroups $K(n,r)$ and $K'(n,r)$ are generated by their maximum $\mathcal{J}$-classes see \cite{Howie&McFadden:1990,Garba:1990}, respectively; and by Lemma \ref{L(n,r)}, this assertion is true for $L(n,r)$. Hence, by Lemma \ref{minimal-generating-set} every minimal generating set for these semigroups is contained in their maximum $\mathcal{J}$-classes. On the other hand, the rank of elements in the maximum $\mathcal{J}$-class for these semigroups is $r$. Hence, Theorem \ref{lower-bound-N'} implies that
  \begin{align*}
N'(K(n,r)) \geq \left\lceil \frac{n-1}{n-r}\right\rceil,\\ 
N'(K'(n,r)) \geq \left\lceil \frac{n}{n-r} \right \rceil,\\
N'(L(n,r)) \geq \left\lceil \frac{n}{n-r} \right \rceil.
\end{align*}
The proof is completed by showing that
\begin{align*}
M'(K(n,r)) \leq \left\lceil \frac{n-1}{n-r}\right\rceil,\\ 
M'(K'(n,r)) \leq \left\lceil \frac{n}{n-r} \right \rceil,\\
M'(L(n,r)) \leq \left\lceil \frac{n}{n-r} \right \rceil.
\end{align*}
First, we prove that 
$ M'(K(n,r)) \leq \left\lceil \frac{n-1}{n-r}\right\rceil.$ Let $A$ be a minimal generating set of $K(n,r)$. We show that there exists some product of at most $\left\lceil \frac{n-1}{n-r}\right\rceil$ generators in $A$ which is a constant transformation. Denote by $J$ the maximum $\mathcal{J}$-class of $K(n,r)$. By Lemma \ref{maximum-j-class}, $A$ covers the $\mathcal{L}$-classes of $J$ and the $\mathcal{R}$-classes of $J$. Since $A$  covers the $\mathcal{L}$-classes of $J$, there exists a transformation $f_1 \in A$ such that $\Im(f_1)=\{1,2,\ldots,r\}$. Since $A$ also covers the $\mathcal{R}$-classes of $J$, we can define $f_2,f_3,\ldots, f_{\ell} \in A$ as follows:
	for $i \geq 2$, if $\rank(f_1f_2\cdots f_{i-1}) > n-r+1$, then   
 choose  $f_i \in A$ that collapses $n-r+1$ elements in the image of $f_1f_2\cdots f_{i-1}$; otherwise, choose  $f_i \in A$  that collapses all the elements in the image of $f_1f_2\cdots f_{i-1}$. It is enough to check that $f_1f_2\cdots f_{\left\lceil \frac{n-1}{n-r}\right\rceil}$ is a constant transformation. For $r=1$, this is trivial. Let $r \geq 2$. If $r \leq n-r+1$, then $f_1f_2$ is a constant transformation. On the other hand, the inequalities $2 \leq r \leq n-r+1$ imply $ 2=\left\lceil \frac{n-1}{n-r}\right\rceil$. Suppose next that $r > n-r+1$. There exists $k\geq 2$ such that $ \rank(f_1f_2\cdots f_k) \leq n-r+1$ and $ \rank(f_1f_2\cdots f_{k-1}) > n-r+1$. Since $f_{k+1}$ collapses all the elements in the image of $f_1f_2 \cdots f_k$, then $f_1f_2\cdots f_{k+1}$ is a constant transformation. It remains to show that $k+1=\left\lceil \frac{n-1}{n-r}\right\rceil$. Note that 
$$\rank(f_1f_2 \cdots f_i)=r-(i-1)(n-r), \mbox{ for } 1 \leq i \leq k.$$ 
Hence, we have
\begin{equation}\label{3.3}
\rank(f_1f_2\cdots f_k)=r-(k-1)(n-r) \leq n-r+1,
\end{equation}
and
\begin{equation}\label{3.4}
\rank(f_1f_2\cdots f_{k-1})=r-(k-2)(n-r) > n-r+1.
\end{equation}
The inequalities \eqref{3.3} and \eqref{3.4} imply that 
$$ k < \frac{n-1}{n-r} \leq k+1,$$
which is the desired conclusion.

Next, we prove that
$$M'(L(n,r)) \leq \left\lceil \frac{n}{n-r}\right\rceil.$$ Let $B$ be a minimal generating set of $L(n,r)$. We show that there exists some product of at most $\left\lceil \frac{n}{n-r}\right\rceil$ generators in $B$ which is the empty transformation. By Lemma \ref{maximum-j-class}, $B$ covers the $\mathcal{R}$-classes of $J_r$. Hence, there exists a transformation $g_1 \in B$ such that $ 1,2,\ldots,n-r \not \in \dom(g_1)$. We can define $g_2,g_3,\ldots, g_{\ell} \in B$ as follows:
	for $i \geq 2$, if $\rank(g_1g_2\cdots g_{i-1}) \geq n-r+1$   
 choose  $g_i \in B$ such that $n-r$ elements in the image of $g_1g_2\cdots g_{i-1}$ are excluded from $\dom(g_i)$; otherwise, choose  $g_i \in A$  such that all elements in the image of $g_1g_2\cdots g_{i-1}$ are excluded from $\dom(g_i)$. It is enough to check that $g_1g_2\cdots g_{\left\lceil \frac{n}{n-r}\right\rceil}$ is the empty  transformation. If $r=1$, then $g_1g_2$ is the empty transformation and $\left\lceil \frac{n}{n-1}\right\rceil=2$. Let $r \geq 2$. If $ r < n-r+1 $, then $g_1g_2$ is the empty transformation. On the other hand the inequalities  $2 \leq r <n-r+1 $ imply $\left\lceil \frac{n}{n-r}\right\rceil=2$. Suppose next that $r \geq n-r+1$. There exists $k \geq 2 $ such that 
 \begin{align}
 0 <\rank(g_1g_2\ldots g_k) < n-r+1,\label{eq:rank}\\ 
  \rank(g_1g_2\cdots g_{k-1} ) \geq n-r+1.\label{eq:rank1}
  \end{align}
 Since none of the elements in the image of $g_1g_2\ldots g_k$ is in the domain of $g_{k+1}$, then $g_1g_2\cdots g_{k+1}$ is the empty transformation. It remains to show that $k+1=\left\lceil \frac{n}{n-r}\right\rceil$.
 By definition of  $g_k$, we have 
\begin{equation}\label{equation1}
\rank(g_1g_2\cdots g_k)=n-k(n-r),
\end{equation}
and 
\begin{equation}\label{equation2}
\rank(g_1g_2\cdots g_{k-1})=n-(k-1)(n-r).
\end{equation}
Substituting \eqref{equation1} in \eqref{eq:rank}  and \eqref{equation2} in \eqref{eq:rank1}, we obtain
$$ k < \frac{n}{n-r} \leq k+1,$$
which is the desired conclusion.

Finally, we consider the semigroup $K'(n,r)$. Let $C$ be a minimal generating set of $K'(n,r)$.  By Lemma \ref{maximum-j-class}, $C$ covers the $\mathcal{R}$-classes of the maximum $\mathcal{J}$-class of $K'(n,r)$. On the other hand, the maximum $\mathcal{J}$ -class of $L(n,r)$ is contained in the maximum $\mathcal{J}$-class of $K'(n,r)$.  Then by Lemma \ref{partial-permutation}, we may choose 
$g_1,g_2,\ldots, g_{\left\lceil \frac{n}{n-r}\right\rceil} \in C$. This shows that 
$N(K'(n,r),C) \leq \left\lceil \frac{n}{n-r}\right\rceil$ and so 
$M'(K'(n,r)) \leq \left\lceil \frac{n}{n-r}\right\rceil$.   
\end{proof}

In the sequel, we try to calculate the maximum $A$-depth over all minimal generating sets. We just   
  apply the following simple lemma to establish an upper bound for $M'(S)$ provided that $S$ is a semigroup generated by the maximal $\mathcal{J}$-classes.  First, we need to introduce some notation.
\begin{notation}
Let $S$ be a finite semigroup. Denote by $J_M$ \index[notation]{$J_M$} the set of all the maximal $\mathcal{J}$-classes of $S$. For every $\mathcal{J}$-class $J$  of $S$ denote by
$h_J$\index[notation]{$h_J$}, $\ell_J$\index[notation]{$\ell_J$} and $r_J$\index[notation]{$r_J$} the number of classes in $\mathcal{J}$  for the relations $\mathcal{H}$,
  $\mathcal{L}$ and $\mathcal{R}$, respectively.
\end{notation}
\begin{lem}\label{length-in-Jclass}
Let  $J$ be a maximal $\mathcal{J}$-class of a semigroup $S$. Let $A$ be a generating set of $S$. The length of elements in $J$ with respect to $A$ is at most $\min \{\ell_Jh_J,r_Jh_J\}$. 
\end{lem}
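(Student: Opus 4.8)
The plan is to fix an element $x$ in the maximal $\mathcal{J}$-class $J$ and exhibit a short factorization of $x$ in terms of the generating set $A$, using the fact that in a finite semigroup $\mathcal{J}=\mathcal{D}$ and that $J$ is maximal (so no product that lands in $J$ can ever leave and come back). Since $J$ is a regular $\mathcal{J}$-class (being maximal, every element of $J$ has all its factors in $J$, and one checks it is regular), Green's Lemma and the egg-box structure of $J$ apply. The idea is that any element of $J$ can be reached from a fixed idempotent-based reference point by moving within a single $\mathcal{R}$-class and then within a single $\mathcal{L}$-class; the number of $\mathcal{H}$-class ``hops'' needed in each direction is bounded by the number of $\mathcal{H}$-classes one must traverse, and each hop costs one generator that already lies in $J$ (such generators exist and cover all $\mathcal{L}$- and $\mathcal{R}$-classes of $J$, by Lemma~\ref{maximum-j-class}).

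More concretely, I would argue as follows. By Lemma~\ref{maximum-j-class}, $A$ contains, for each $\mathcal{R}$-class of $J$, some element of $A$ in that $\mathcal{R}$-class, and likewise for each $\mathcal{L}$-class. First reduce to the ``$r_J h_J$'' bound. Pick $x \in J$. Choose a reference $\mathcal{H}$-class $H_0$ in the $\mathcal{L}$-class $L_x$ of $x$ that contains an element of $A$ (it exists: some $a\in A$ lies in $L_x$, giving an $\mathcal{H}$-class of $A$-elements inside $L_x$). Now one wants to write $x = w_1 w_2$ where $w_1 \in J$ reaches the $\mathcal{R}$-class $R_x$ of $x$ starting from an $A$-element, and $w_2$ is an $A$-element in $L_x$ that ``rotates'' within $L_x$ to land exactly on $x$. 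The subtle point: multiplying by a single $a\in A\cap R$ for various $\mathcal{R}$-classes $R$ lets one walk up the $\mathcal{R}$-order, and since $J$ is maximal there is no drift out of $J$; stringing together at most $r_J$ such elements (one per $\mathcal{R}$-class, in the right order dictated by the action on the fixed $\mathcal{L}$-class) realizes any prescribed $\mathcal{R}$-class while controlling which $\mathcal{H}$-class within $L_x$ we sit in — and within that $\mathcal{L}$-class the group of units acting on the $h_J$ $\mathcal{H}$-classes is reached by at most $h_J$ further multiplications by $A$-elements of $J$. Counting: at most $r_J$ steps to fix the $\mathcal{R}$-component and at most $h_J$ steps to fix the $\mathcal{H}$-position inside $L_x$, giving length at most $r_J h_J$ for $x$. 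By the dual argument (fixing a reference $\mathcal{R}$-class first, walking across the $\ell_J$ $\mathcal{L}$-classes, then adjusting within the fixed $\mathcal{R}$-class among its $h_J$ $\mathcal{H}$-classes) we get the bound $\ell_J h_J$. Taking the minimum of the two bounds gives the claim.

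The main obstacle I anticipate is making the ``walk'' argument fully rigorous: one must verify that successive multiplication by chosen $A$-elements of $J$ stays inside $J$ (here maximality of $J$ is exactly what is needed, since any product meeting $J$ can neither rise above $J$ nor, once it is in $J$, fall below and return), and that the elements of $A\cap J$ covering the $\mathcal{R}$- and $\mathcal{L}$-classes really do generate, via right (resp.\ left) multiplication, the full permutation group worth of $\mathcal{H}$-class translations inside a fixed $\mathcal{L}$-class (resp.\ $\mathcal{R}$-class). This is where Green's Lemma and the standard egg-box combinatorics enter: a single $\mathcal{L}$-class has $h_J$ $\mathcal{H}$-classes permuted by the Schützenberger group, and one needs each required translation to be achievable by at most $h_J$ on-the-nose multiplications. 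A clean way to phrase it is: within $J$, right multiplication by an element $a \in A\cap J$ whose image $\mathcal{L}$-class is appropriate induces a bijection between $\mathcal{H}$-classes in the relevant $\mathcal{L}$-classes, and composing at most $h_J$ such bijections reaches any target $\mathcal{H}$-class; similarly at most $r_J$ (resp.\ $\ell_J$) such multiplications reach any target $\mathcal{R}$- (resp.\ $\mathcal{L}$-) class. Assembling these estimates, each element of $J$ factors as a product of at most $\min\{\ell_J h_J, r_J h_J\}$ elements of $A$, which is the assertion of the lemma.
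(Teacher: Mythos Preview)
Your approach is constructive: you try to \emph{build} a short factorization of $x$ by navigating the egg-box of $J$. The paper does something much simpler, and your construction has a genuine gap.

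The paper's argument is pure counting. Take a minimal-length factorization $x=a_1a_2\cdots a_k$. Since $J$ is maximal, each $a_i\in A\cap J$, and every prefix $a_1\cdots a_i$ lies in $J$ and is $\leq_{\mathcal{R}} a_1$; hence all $k$ prefixes sit in the single $\mathcal{R}$-class $R_{a_1}$. Minimality of $k$ forces the prefixes to be pairwise distinct, so $k\leq |R_{a_1}|=\ell_J h_J$. The dual argument on suffixes gives $k\leq r_J h_J$. That is the whole proof: no Green's Lemma, no Sch\"utzenberger group, no walk.

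Your route, by contrast, never makes the counting work. You write ``at most $r_J$ steps to fix the $\mathcal{R}$-component and at most $h_J$ steps to fix the $\mathcal{H}$-position inside $L_x$, giving length at most $r_J h_J$.'' As stated this yields $r_J+h_J$, not $r_J h_J$; if a multiplicative bound is intended, it is nowhere justified. More seriously, the mechanism of the walk is unsupported: knowing that $A$ meets every $\mathcal{L}$- and $\mathcal{R}$-class of $J$ does \emph{not} tell you that an arbitrary translation between $\mathcal{H}$-classes inside a fixed $\mathcal{L}$-class (or an arbitrary element of the Sch\"utzenberger group) is realized by right multiplication by a \emph{single} element of $A\cap J$. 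You would need that the generators induce, under right multiplication, a generating set of the relevant permutation group on the $\mathcal{H}$-classes, together with a diameter bound for that group --- none of which is available from the hypotheses. So the constructive program stalls exactly at the step where you need to convert ``covers the classes'' into ``realizes every move in one step.''

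The fix is to abandon construction and argue as the paper does: bound the length of an \emph{arbitrary} minimal factorization by observing that its prefixes are distinct elements of one $\mathcal{R}$-class and its suffixes are distinct elements of one $\mathcal{L}$-class.
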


\begin{proof}
Let $x \in J$ and  $l_A(x)=k$. There exist $a_1,a_2, \ldots, a_k \in A \cap J$ such that $x=a_1a_2\ldots a_k$. Since, $a_1, a_1a_2, \ldots , a_1a_2 \ldots a_k$ are $k$ distinct elements in the same $\mathcal{R}$-class, then $k \leq \ell_J h_J$. On the other hand, $a_k, a_{k-1}a_k, \ldots, a_1a_2 \ldots a_k$ are $k$ distinct elements in the same $\mathcal{L}$-class, then $k \leq r_J h_J$. Hence, we have $$k \leq \min \{\ell_Jh_J,r_Jh_J\}.\qedhere$$
\end{proof}

\begin{prop}\label{upper-bound-M'}
Let $S$ be a finite semigroup. If $S$ is  generated by the maximal $\mathcal{J}$-classes, then 
$$M'(S) \leq N(S, \cup_{J \in J_M} J) \max_{J \in J_M}\min \{\ell_Jh_J,r_Jh_J\}.$$
\end{prop}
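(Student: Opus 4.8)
The plan is to bound $M'(S)$ by splitting any product witnessing an element of $\ker(S)$ into two stages: first reach an element of $\bigcup_{J\in J_M}J$ using a bounded number of generators, then continue from there down to the kernel. Let $A$ be an arbitrary minimal generating set of $S$; by the remark following the definition of $M'$ we may as well let $A$ be any generating set. Since $S$ is generated by the maximal $\mathcal{J}$-classes, Lemma \ref{minimal-generating-set} (applied with $I$ the ideal of all elements strictly $\mathcal{J}$-below every maximal $\mathcal{J}$-class, or directly) forces every element of a minimal $A$ to lie in some maximal $\mathcal{J}$-class; in any case $A \subseteq \bigcup_{J\in J_M}J$ up to replacing $A$ by a minimal generating subset, which only decreases $A$-depth.

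First I would observe that $\ker(S)$ is also generated by $\bigcup_{J\in J_M}J$ in the following sense: writing $t = N(S,\bigcup_{J\in J_M}J)$, there exist $x_1,\dots,x_t$, each lying in some maximal $\mathcal{J}$-class, with $x_1x_2\cdots x_t \in \ker(S)$. Then for each $i$, since $x_i$ belongs to a maximal $\mathcal{J}$-class $J_i$ and $A$ generates $S$, Lemma \ref{length-in-Jclass} gives $l_A(x_i) \le \min\{\ell_{J_i}h_{J_i}, r_{J_i}h_{J_i}\} \le \max_{J\in J_M}\min\{\ell_Jh_J, r_Jh_J\}$. Concatenating these $t$ expressions yields a word in the generators of $A$ of length at most $t\cdot \max_{J\in J_M}\min\{\ell_Jh_J, r_Jh_J\}$ representing an element of $\ker(S)$, so $N(S,A) \le N(S,\bigcup_{J\in J_M}J)\cdot \max_{J\in J_M}\min\{\ell_Jh_J,r_Jh_J\}$.

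Since $A$ was an arbitrary minimal generating set, taking the maximum over all such $A$ gives the claimed bound on $M'(S)$. The only subtle point — and the step I expect to need the most care — is justifying that the $x_i$ witnessing $N(S,\bigcup_{J\in J_M}J)$ really do each sit inside a single maximal $\mathcal{J}$-class, so that Lemma \ref{length-in-Jclass} applies to each factor: this is exactly what it means for $\bigcup_{J\in J_M}J$ to be taken as the generating set in the definition of $N(S,\cdot)$, so a product of length $t$ from that set is by definition $x_1\cdots x_t$ with each $x_i \in \bigcup_{J\in J_M}J$, and each such $x_i$ lies in a unique maximal $\mathcal{J}$-class. Everything else is a direct concatenation-of-words estimate.
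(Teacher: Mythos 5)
Your argument is correct and is essentially the paper's own proof: write an element of $\ker(S)$ as $x_1\cdots x_t$ with $t=N(S,\bigcup_{J\in J_M}J)$ and each $x_i$ in a maximal $\mathcal{J}$-class, bound each $l_A(x_i)$ by Lemma \ref{length-in-Jclass}, and concatenate. One small remark: the opening digression about forcing $A\subseteq\bigcup_{J\in J_M}J$ is unnecessary since Lemma \ref{length-in-Jclass} applies to an arbitrary generating set, and in fact passing to a minimal generating subset can only \emph{increase} the $A$-depth (by the remark $A\subseteq B\Rightarrow N(S,B)\leq N(S,A)$), but nothing in your main argument depends on that aside.
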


\begin{proof}
Let $A$ be a minimal generating set of $S$. It suffices to show that $N(S,A)$ is bounded above by the proposed bound.
Let $N(S,\cup_{J \in J_M} J)=k$. There exist $x_1,x_2,\ldots,x_k \in \cup_{J \in J_M} J$ such that $x=x_1 x_2 \ldots x_k \in \ker(S)$.  We have
$l_A(x) \leq \sum_{i=1}^k { l_A (x_i)}$. 
According to 
Lemma \ref{length-in-Jclass},  $l_A(x_i) \leq \min \{\ell_Jh_J,r_Jh_J\}$ 
for some maximal $\mathcal{J}$-class of $S$ containing $x_i$. If $M$ is the maximum of $\min \{\ell_J h_J,r_J h_J\}$ over all maximal $\mathcal{J}$-classes of $S$, then  $l_A(x_i) \leq M$ for $1 \leq i \leq k$. This shows that $l_A(x) \leq kM$. Hence $N(S,A) \leq kM$ which is the desired conclusion.
\end{proof}
\section{$A$-depth and products of semigroups}\label{se:product}

We did some attempts to understand the behavior of the depth parameters with respect to products (direct product and wreath product) of semigroups. Here we deal mostly with monoids rather than semigroups because it is easier to say something about minimal generating sets when the components of the product are two monoids.  
  
  \subsection{ Direct product}
  Let $S, T$ be two finite monoids. We are interested in estimating the parameters
  \begin{align*}
  N'(S \times T),~
  N(S \times T),
  \end{align*}
  with respect to the corresponding parameters for $S$ and $T$. 
  First, we observe that the kernel of the direct product of two finite semigroups is the product of the kernels of its components.
  \begin{lem}
  Let $S$, $T$ be two finite semigroups. Then
  $$\ker(S \times T) = \ker(S) \times \ker(T).$$
  \end{lem}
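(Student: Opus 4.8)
The plan is to prove the two inclusions $\ker(S\times T)\subseteq \ker(S)\times\ker(T)$ and $\ker(S)\times\ker(T)\subseteq\ker(S\times T)$ separately, using only the characterization of the kernel of a finite semigroup as its unique minimum (two-sided) ideal. First I would observe that $\ker(S)\times\ker(T)$ is a nonempty two-sided ideal of $S\times T$: if $(a,b)\in\ker(S)\times\ker(T)$ and $(s,t)\in S\times T$, then $(s,t)(a,b)=(sa,tb)\in\ker(S)\times\ker(T)$ because $\ker(S)$ and $\ker(T)$ are ideals of $S$ and $T$ respectively, and similarly on the right. Since $S\times T$ is finite it has a unique minimum ideal, namely $\ker(S\times T)$, and every ideal contains it; hence $\ker(S\times T)\subseteq\ker(S)\times\ker(T)$.

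For the reverse inclusion I would use the projections. Let $\pi_S\colon S\times T\to S$ and $\pi_T\colon S\times T\to T$ be the two coordinate projections; these are surjective semigroup homomorphisms. A standard fact (which I would either cite or prove in one line) is that the image of the minimum ideal under a surjective homomorphism is again the minimum ideal, i.e. $\pi_S(\ker(S\times T))=\ker(S)$ and $\pi_T(\ker(S\times T))=\ker(T)$. Thus for any $a\in\ker(S)$ there is $(a,t')\in\ker(S\times T)$ for some $t'\in T$, and for any $b\in\ker(T)$ there is $(s',b)\in\ker(S\times T)$ for some $s'\in S$. Then $(a,b)=(a,t')(s',b)$ lies in $\ker(S\times T)$ because $\ker(S\times T)$ is an ideal and hence closed under products with arbitrary elements; in fact it is a subsemigroup. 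This gives $\ker(S)\times\ker(T)\subseteq\ker(S\times T)$ and completes the proof.

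The only point that needs a small amount of care is the claim that a surjective homomorphism sends the minimum ideal onto the minimum ideal; so that is the step I would expect to be the mild obstacle, though it is routine. One clean way: if $K=\ker(S\times T)$, then $\pi_S(K)$ is an ideal of $S$ since $\pi_S$ is onto, so $\ker(S)\subseteq\pi_S(K)$; conversely $K$ is contained in the ideal $\pi_S^{-1}(\ker(S))$ (which is an ideal because $\ker(S)$ is and $\pi_S$ is a homomorphism), so $\pi_S(K)\subseteq\ker(S)$, giving equality. The finiteness of all semigroups involved is used exactly where we invoke existence and uniqueness of the minimum ideal; without it the statement can fail, consistent with the remark in the preliminaries that $\mathcal J=\mathcal D$ may also fail in the infinite case. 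Everything else is a direct manipulation of pairs.
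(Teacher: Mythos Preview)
Your first inclusion and your lemma on surjective homomorphisms are both correct and cleanly argued; the paper obtains the inclusion $\ker(S\times T)\subseteq\ker(S)\times\ker(T)$ in the same way. The problem is the final step of the reverse inclusion: you assert
\[
(a,b)=(a,t')(s',b),
\]
but the right-hand side is $(as',\,t'b)$, and there is no reason why $as'=a$ or $t'b=b$. For instance, with $S=T=\mathbb{Z}/2\mathbb{Z}$ (so $\ker(S)=S$, $\ker(T)=T$), taking $a=b=0$, $t'=s'=1$ gives $(a,t')(s',b)=(1,1)\neq(0,0)$. So the displayed equality is simply false, and the argument does not establish $(a,b)\in\ker(S\times T)$.

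The gap is easily repaired, but the repair forces you back to the idea the paper uses. From the first inclusion you know any $(x,y)\in\ker(S\times T)$ has $x\in\ker(S)$ and $y\in\ker(T)$. Since $\ker(S)$ and $\ker(T)$ are simple, for your target $(a,b)\in\ker(S)\times\ker(T)$ there exist $u_1,v_1\in\ker(S)$ and $u_2,v_2\in\ker(T)$ with $u_1xv_1=a$ and $u_2yv_2=b$; then
\[
(a,b)=(u_1,u_2)(x,y)(v_1,v_2)\in\ker(S\times T)
\]
because $\ker(S\times T)$ is an ideal. This is exactly the paper's route, which phrases it as ``the direct product of two simple semigroups is simple'' (equivalently, $\ker(S)\times\ker(T)$ is a single $\mathcal{J}$-class, hence equals the minimum ideal it contains). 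Your projection lemma is correct but, once the product step is fixed, no longer needed: a single element of $\ker(S\times T)$ together with simplicity of the two kernels already suffices.
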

  \begin{proof}
  It is easy to see that $\ker(S) \times \ker(T)$ is an ideal of $S \times T$. Since $\ker(S \times T)$ is the minimum ideal of $S \times T$, then $\ker(S \times T) \subseteq \ker(S) \times \ker(T).$ It remains to show that $\ker(S) \times \ker(T)$ is just one $\mathcal{J}$-class. It follows from the fact that the direct product of two simple semigroups is a simple semigroup; it is easy to justify this fact by considering that a semigroup $S$ is simple if and only if $SaS=S$ for every $a \in S$ \cite{Pin:1986}.
  \end{proof}
  Next, we need to establish a relationship between generating sets of the direct product and generating sets of its components. We could not find a nice general method for constructing a generating set of minimum size for $S_1 \times S_2$ when the semigroups $S_1,S_2$ do not contain an identity element. Just as an easy example we consider the product of two monogenic semigroups.
    
\begin{ex}
 Let $i,n,j,m \geq 1$. Then the depth parameters are all
equal for $C_{i,n} \times C_{j,m}$ and they are given by
the formula 
$$N(C_{i,n} \times C_{j,m})=\left\{ \begin{array}{ccccc}
0  & \mbox{if} & i=j=1\\
i & \mbox{if} & j=1, i \not =1\\
j & \mbox{if} & i=1, j \not =1\\
2 & \mbox{if} & i,j \not = 1\\
\end{array}\right.$$
Furthermore, if $i \not =1$, or $j \not = 1$, then $C_{i,n} \times C_{j,m}$ has a unique minimal generating set.
\end{ex}
\begin{proof}
Let $C_{i,n}=\langle a:~ a^{i+n}=a^i \rangle$
and $C_{j,m}=\langle b:~ b^{j+m}=b^j \rangle.$ In case both $i,j$ are equal to $1$, these cyclic semigroups are
groups and, therefore, so is
their product. Because $N(G)=0$ for any group $G$ then $N(C_{1,n} \times C_{1,m})=0$. 
If $j=1 , i \not =1$, then the maximum $\mathcal{J}$-class of $C_{i,n}\times C_{1,m}$ is $\{a\} \times C_{1,m}$. If $A$ is any generating set of  $C_{i,n}\times C_{1,m}$ then $A$ must contain $\{a\} \times C_{1,m}$ because $a$ can not be written as a product of two elements. On the other hand, $\{a\} \times C_{1,m}$ generates $C_{i,n}\times C_{1,m}$ because, if $(a^k,b^l)\in C_{i,n}\times C_{1,m}~ \mbox{for some}~ k>1,$ then $ (a^k,b^l)=(a,b^l)(a,1)^{k-1}$. Therefore, $\{a\} \times C_{1,m}$ is the unique generating set of $C_{i,n}\times C_{1,m}$ of minimum size and $$\ker(C_{i,n}\times C_{1,m})=\{a^i,a^{i+1},\ldots ,a^{i+n}\} \times C_{1,m}.$$ Note that $(a,1)^i \in \ker(C_{i,n}\times C_{1,m})$ and, because the first component of every element in the generating set is $a$, the product of generators with less than $i$ factors can not reach the minimum ideal. Therefore $N(C_{i,n} \times C_{1,m})=i$. The case where $i=1 ,j\not = 1$ is similar.
Now, let $i,j\not =1$. 
  We show that 
  $$A=\{(a,b^k)| 1 \leq k \leq j+m-1 \} \cup \{(a^l,b)| 1 \leq l \leq i+n-1\}$$ is the unique minimal generating set of $C_{i,n} \times C_{j,m}$. Every generating set must contain $A$ because $a$ and $b$ cannot be written as products of any other elements. Furthermore, if $ (a^s,b^t) \in C_{i,n} \times C_{j,m} ~\mbox{for some} ~s,t > 1$ then $ (a^s,b^t)=(a,b^{t-1})(a^{s-1},b)$. Hence, $A$ generates $C_{i,n} \times C_{j,m}$. 
We have $a^i \in \ker(C_{i,n})$ and $a^j \in \ker(C_{j,m})$. In view of Lemma 11, it
 follows that $(a,b^{j-1})(a^{i-1},b)=(a^i,b^j) \in \ker(C_{i,n} \times C_{j,m})$.
This proves that $N (C_{i,n} \times C_{j,m})=2$.
\end{proof}

 In the next example, we treat the case where just one of the components in the direct product is a cyclic semigroup.
 
 \begin{ex}
 Let $S$ be a semigroup and let $i > 1,~n \geq 1$. Then, the following inequality holds: $$M'(S\times C_{i,n}) \leq i.$$
 \end{ex}
 \begin{proof}
 Let $$C_{i,n}=\{a,a^2,\ldots,a^i,a^{i+1},\ldots,a^{n+i-1}\}.$$ If $A$ is any generating set of $S \times C_{i,n}$ then $S\times \{a\} \subseteq A$. Let $x \in \ker(S)$. We have $(x,a) \in A$ and $(x,a)^i=(x^i,a^i) \in \ker(S)\times \ker(C_{i,n})$, whence $N(S\times C_{i,n},A) \leq i$.   
 \end{proof}
 From now on, we consider monoids rather than semigroups. Let $A_1,A_2$ be two minimal generating sets of the monoids $M_1 \neq \{1\}$ and $M_2 \neq \{1\}$, respectively.  If  $ (1,1) \not \in (A_1 \times \{1\} )\cup (\{1\} \times A_2)$, then $A=(A_1 \times \{1\} )\cup (\{1\} \times A_2) $ is a minimal generating set of $M_1 \times M_2$; otherwise $A=(A_1 \times \{1\} )\cup (\{1\} \times A_2) \setminus \{(1,1)\} $ is a minimal generating set of $M_1 \times M_2$. Let $N'(M_1)=t_1,~ N'(M_2)=t_2$. There exist $a_1,a_2,\ldots,a_{t_1} \in A_1 \setminus \{1\}, ~a'_1,a'_2,\ldots,a'_{t_2} \in A_2 \setminus \{1\}$ such that $a_1a_2\ldots a_{t_1} \in \ker(M_1),~a'_1a'_2\ldots a'_{t_2} \in \ker(M_2)$. So, we have $(a_1a_2\ldots a_{t_1},a'_1a'_2 \ldots a'_{t_2} ) \in \ker(M_1 \times M_2)$. On the other hand, the length of $(a_1a_2\ldots a_{t_1},a'_1a'_2 \ldots a'_{t_2} )$ with respect to $A$ is
 $t_1+t_2$. It follows that 
 \begin{equation}\label{N'}
 N'(M_1 \times M_2 ) \leq N'(M_1) + N'(M_2).
 \end{equation}
 It is natural to ask whether there is an expression like inequality \eqref{N'} for the other parameters $N,M,M'$.
 In fact, if $A$  or $A \setminus \{(1,1)\}$ is a generating set of minimum size then we could derive a similar inequality for $N$. But $A$ may not be a generating set of minimum size. 
In general, we may establish the following lemma concerning the rank of the direct product of two finite monoids.
\begin{defi}
For a finite monoid $M$ with group of units $U$, the rank of $M$ modulo $U$ is the minimum number of elements in $M \setminus U$ which together with $U$ generate $M$.
\end{defi}
  \begin{lem}\label{lem:generating-set-direct-product}
  Let $M_1,M_2$ be two finite monoids. Denote by $U_{i}$ the group of units of $M_i$ and by $k_i$ the rank of $M_i$ modulo $U_{i}$. Let $A'_i \subseteq M_i \setminus U_i$ 
   be such that 
$|A'_i|=k_i~\mbox{and}~M_i = \langle U_i \cup A'_i\rangle.$
   Let $B$ be a generating set of $U_1 \times U_2$. Then the set 
   \begin{equation*}
   C=B \cup (A'_1 \times \{1\}) \cup (\{1\} \times A'_2), 
\end{equation*}    
  is a generating set of $M_1 \times M_2$. Furthermore, we have
  $$\rank(M_1 \times M_2)= \rank(U_1 \times U_2 )  + k_1+k_2.$$
  \end{lem}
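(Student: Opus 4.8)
The plan is to prove the generating‑set claim first and then deduce the rank formula from a cheap upper bound and a lower bound that carries the real weight. For the first claim I would show that $\langle C\rangle$ contains both $M_1\times\{1\}$ and $\{1\}\times M_2$; since every element factors as $(m_1,m_2)=(m_1,1)(1,m_2)$, this forces $\langle C\rangle=M_1\times M_2$. And $\langle C\rangle\supseteq M_1\times\{1\}$ holds because $B\subseteq C$ already generates $U_1\times U_2\supseteq U_1\times\{1\}$, so $\langle C\rangle$ contains $(U_1\times\{1\})\cup(A'_1\times\{1\})=(U_1\cup A'_1)\times\{1\}$ and hence $\langle U_1\cup A'_1\rangle\times\{1\}=M_1\times\{1\}$; the inclusion $\{1\}\times M_2\subseteq\langle C\rangle$ is symmetric.

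For the upper bound, note that the three blocks of $C$ are pairwise disjoint: each element of $A'_i$ lies outside $U_i$, so $A'_1\times\{1\}$ and $\{1\}\times A'_2$ avoid $U_1\times U_2$ and each other. Hence, taking $B$ of minimum size, $|C|=\rank(U_1\times U_2)+k_1+k_2$, and therefore $\rank(M_1\times M_2)\le\rank(U_1\times U_2)+k_1+k_2$.

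For the lower bound I would first record the elementary fact that in a finite monoid $M$ the complement $M\setminus U$ of the group of units, when nonempty, is an ideal: an element with a one‑sided inverse is, by finiteness of $M$ (a surjective translation on a finite set is bijective), already a unit, so a product can land in $U$ only if every factor does. Applied to $M_1\times M_2$, whose group of units is $U_1\times U_2$, this shows that if $A$ is a generating set of minimum size then $A\cap(U_1\times U_2)$ generates $U_1\times U_2$ (any factorisation of a unit uses only unit factors), so it has at least $\rank(U_1\times U_2)$ elements; it remains to prove $|D|\ge k_1+k_2$, where $D:=A\setminus(U_1\times U_2)$. Writing $\pi_1,\pi_2$ for the coordinate projections, set $D_2:=\{d\in D:\pi_2(d)\in U_2\}$ and $D_1:=\{d\in D:\pi_1(d)\in U_1\}$. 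The key observation is that any $(m,1)$ with $m\in M_1\setminus U_1$, written as a product of elements of $A$, uses only factors with second coordinate in $U_2$ — the second coordinates multiply to $1$ and $M_2\setminus U_2$ is an ideal of $M_2$ — so $(m,1)\in\langle(U_1\times U_2)\cup D_2\rangle$; projecting to the first coordinate, $U_1\cup\pi_1(D_2)$ generates $M_1$, whence $|D_2|\ge|\pi_1(D_2)|\ge k_1$. Symmetrically $|D_1|\ge k_2$. Since $D\cap(U_1\times U_2)=\varnothing$, the sets $D_1$ and $D_2$ are disjoint, so $|D|\ge|D_1|+|D_2|\ge k_1+k_2$, which with the upper bound gives the formula. (The degenerate case where $U_1\times U_2$ is trivial should be checked directly, reading $\rank(U_1\times U_2)$ there as the size of a minimal generating set of the trivial monoid.)

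The hard part is this lower bound, and the crux is the remark that a non‑unit element of $M_1\times\{1\}$ can only be assembled from factors whose second coordinate is already a unit: that is exactly what makes the two estimates $|D_2|\ge k_1$ and $|D_1|\ge k_2$ available independently, while the disjointness of $D_1$ and $D_2$ is what lets us add them.
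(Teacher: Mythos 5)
Your proof is correct and follows essentially the same route as the paper's: coordinate-wise factorization of $(m_1,m_2)=(m_1,1)(1,m_2)$ for the generating claim, and for the lower bound the fact that $M_1\times(M_2\setminus U_2)$ and $(M_1\setminus U_1)\times M_2$ are ideals, so a generating set must meet the disjoint pieces $U_1\times U_2$, $(M_1\setminus U_1)\times U_2$ and $U_1\times(M_2\setminus U_2)$ in at least $\rank(U_1\times U_2)$, $k_1$ and $k_2$ elements respectively. Your projection argument establishing $|D_2|\ge k_1$ merely spells out a step the paper leaves implicit.
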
 
  \begin{proof}
  Let $(x,y) \in M_1 \times M_2$. We show that $(x,y) \in \left \langle C \right\rangle$. It is enough to show that $(x,1),(1,y) \in \left \langle C \right\rangle$.
  We know that $x$ is a product of elements in $U_1 \cup A'_1$. Let $x=x_1x_2\ldots x_t$ for some $x_i \in U_1 \cup A'_1$. Hence, we have $(x,1)= \prod_{i=1}^t (x_i,1)$. For $1 \leq i \leq t$; if $x_i \in A'_1$ then we have $(x_i,1) \in C$; if $x_i \in U_1$ then we have $(x_i,1) \in U_1 \times U_2 = \langle B \rangle$. Thus, $(x_i,1) \in \langle C \rangle$, which implies that $(x,1) \in \langle C \rangle$. In the same manner, we can see that $(1,y) \in \langle C \rangle$.  
 
The rest of the proof consists in showing that $C$ is a generating set of minimum size when $B$ is a generating set of minimum size or $U_1 \times U_2$. Let $X$ be a generating set of $M_1 \times M_2$. Write $\bar{M_1}=M_1 \setminus U_1$ and 
$\bar{M_2}=M_2 \setminus U_2$. We have
\begin{equation}\label{generating-set-direct-product}
M_1 \times M_2= (U_1 \times U_2) \cup (U_1 \times \bar{M_2}) \cup (\bar{M_1} \times U_2) \cup (\bar{M_1} \times \bar{M_2}).
\end{equation}
It is clear that $X$ has at least $\rank(U_1 \times U_2)$ elements in $U_1 \times U_2$. Furthermore, $(U_1 \times \bar{M_2}) \cup (\bar{M_1} \times \bar{M_2})$ and $ (\bar{M_1} \times U_2) \cup (\bar{M_1} \times \bar{M_2})$ are ideals of $M_1 \times M_2$, then $X$ has at least $k_1$ elements in $\bar{M_1} \times U_2$ and $k_2$ elements in $U_1 \times \bar{M_2}.$ These facts combined with the pairwise disjointness of the subsets in the right side of \eqref{generating-set-direct-product} gives  $|X| \geq \rank(U_1 \times U_2) + k_1+ k_2,$ which completes the proof.
 \end{proof}
 \begin{rem}\label{re:generating-set-minimum-size}
 Let $A_1, A_2$ be two generating sets of $M_1,M_2$ with minimum size. If $(1,1) \not \in (\{1\} \times A_2) \cup (A_1 \times \{1\})$ , then the size of  the generating set $A=(\{1\} \times A_2) \cup (A_1 \times \{1\})$ is equal to $ \rank(M_1) + \rank(M_2)= \rank(U_1)+k_1+ \rank(U_2) + k_2$, where $k_i$ is the rank of $M_i$ modulo $U_i$. Therefore, by Lemma \ref{lem:generating-set-direct-product}, if $\rank (U_1 \times U_2)= \rank(U_1)+ \rank(U_2)$, then 
 the generating set $A$ is a generating set of minimum size. On the other hand,  by the minimality of $A_1$ and $A_2$, $(1,1) \in A=(\{1\} \times A_2) \cup (A_1 \times \{1\})$ if and only if $U_1=U_2=\{1\}$. Hence,  if $(1,1) \in A$ then $|A\setminus\{(1,1)\}|= \rank(U_1)+k_1+ \rank(U_2) + k_2-1=k_1+k_2+1$. But also by Lemma \ref{lem:generating-set-direct-product}, $\rank(M_1 \times M_2)= k_1+k_2+1$. So, $A \setminus \{(1,1)\}$ is a generating set of minimum size of $M_1 \times M_2$.
 \end{rem}

\begin{theorem}\label{th:upper-bound-direct-product}
Let $M_1$ and $M_2$ be two finite monoids. Then, we have
$$N(M_1 \times M_2) \leq (N(M_1)+N(M_2))D(U_1 \times U_2),$$ provided that $D(U_1 \times U_2) \not = 0$.
Furthermore, if $\rank(U_1 \times U_2 )= \rank(U_1)+ \rank(U_2)$ (and also in the case $D(U_1 \times U_2)  = 0$) then we have
$$N(M_1 \times M_2) \leq N(M_1)+N(M_2).$$
\end{theorem}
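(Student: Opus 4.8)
The plan is to exhibit, in each case, a minimum-size generating set of $M_1\times M_2$ carrying a short word that lands in $\ker(M_1)\times\ker(M_2)$; recall that the latter set is exactly $\ker(M_1\times M_2)$. Write $U_i$ for the group of units of $M_i$, $k_i$ for the rank of $M_i$ modulo $U_i$, $t_i=N(M_i)$, and fix a generating set $A_i$ of $M_i$ with $|A_i|=\rank(M_i)$ and $N(M_i,A_i)=t_i$, so that $x_i:=g^{(i)}_1\cdots g^{(i)}_{t_i}\in\ker(M_i)$ with $g^{(i)}_j\in A_i$; since $t_i$ is minimal we may assume no $g^{(i)}_j$ equals $1$. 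The one structural fact I would isolate first is that, when $M_i$ is not a group, $M_i\setminus U_i$ is an ideal of $M_i$; consequently the units lying in any generating set $A$ of $M_i$ already generate $U_i$, while $A\setminus U_i$ together with $U_i$ generates $M_i$. Combined with the rank formula $\rank(M_i)=\rank(U_i)+k_i$ (the single-monoid case of Lemma \ref{lem:generating-set-direct-product}), this forces $|A_i\cap U_i|=\rank(U_i)$ and $|A_i\setminus U_i|=k_i$, so $A_i=A_i^u\sqcup A_i'$ with $A_i^u\subseteq U_i$ a minimum-size generating set of $U_i$ and $A_i'\subseteq M_i\setminus U_i$ of size $k_i$ satisfying $\langle U_i\cup A_i'\rangle=M_i$.

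For the main bound, assume $D(U_1\times U_2)\ne 0$ and pick a minimum-size generating set $B$ of $U_1\times U_2$ realising its Cayley diameter, i.e. $Ml_B(U_1\times U_2)=D(U_1\times U_2)$; then every element of $U_1\times U_2$ is a product of at most $D(U_1\times U_2)$ members of $B$. Set $C=B\cup(A_1'\times\{1\})\cup(\{1\}\times A_2')$. By Lemma \ref{lem:generating-set-direct-product}, $C$ generates $M_1\times M_2$ with $|C|=\rank(U_1\times U_2)+k_1+k_2=\rank(M_1\times M_2)$; since $U_1\times U_2$ is nontrivial its minimum-size generating sets avoid $(1,1)$ (the identity of a nontrivial group being a product of non-identity elements), so $(1,1)\notin C$ and $C$ has minimum size. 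Now rewrite $x_1=g^{(1)}_1\cdots g^{(1)}_{t_1}$ in the first coordinate: each factor with $g^{(1)}_j\in A_1'$ gives $(g^{(1)}_j,1)\in C$, and each factor with $g^{(1)}_j\in A_1^u\subseteq U_1$ gives $(g^{(1)}_j,1)\in U_1\times U_2$, a product of at most $D(U_1\times U_2)$ elements of $B\subseteq C$. Concatenating yields a $C$-word of length at most $t_1\,D(U_1\times U_2)$ equal to $(x_1,1)$, and symmetrically a $C$-word of length at most $t_2\,D(U_1\times U_2)$ equal to $(1,x_2)$. Their product is $(x_1,x_2)\in\ker(M_1)\times\ker(M_2)=\ker(M_1\times M_2)$, so $N(M_1\times M_2)\le N(M_1\times M_2,C)\le(N(M_1)+N(M_2))\,D(U_1\times U_2)$.

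For the refined bound, suppose $\rank(U_1\times U_2)=\rank(U_1)+\rank(U_2)$, or else $U_1=U_2=\{1\}$ (which is precisely the case $D(U_1\times U_2)=0$). By Remark \ref{re:generating-set-minimum-size} the set $A=\big((A_1\times\{1\})\cup(\{1\}\times A_2)\big)\setminus\{(1,1)\}$ is then a minimum-size generating set of $M_1\times M_2$. The coordinate-wise rewriting is now free of any detour through $B$: since $g^{(1)}_j\ne 1$ we have $(g^{(1)}_j,1)\in(A_1\times\{1\})\setminus\{(1,1)\}\subseteq A$, so $(x_1,1)$ and $(1,x_2)$ have $A$-length at most $t_1$ and $t_2$, and their product in $\ker(M_1\times M_2)$ has $A$-length at most $t_1+t_2=N(M_1)+N(M_2)$.

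The only part needing genuine care is the accounting for minimum-size generating sets: the splitting $A_i=A_i^u\sqcup A_i'$, and the verification that $C$ (respectively $A$) is of minimum size, the delicate point being whether $(1,1)$ belongs to it. All of this is settled by the ideal property of $M_i\setminus U_i$ together with Lemma \ref{lem:generating-set-direct-product} and Remark \ref{re:generating-set-minimum-size}. The single idea behind the inequality is that a unit-valued generator $g$ of $M_i$, transplanted into $M_1\times M_2$ as $(g,1)\in U_1\times U_2$, can be produced from $B$ at a cost of at most $D(U_1\times U_2)$, while a non-unit generator costs only one letter.
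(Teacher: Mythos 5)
Your proof is correct and follows essentially the same route as the paper's: rewrite a minimal kernel word of each factor coordinate-wise over the generating set $C=B\cup(A_1'\times\{1\})\cup(\{1\}\times A_2')$ from Lemma \ref{lem:generating-set-direct-product}, charging at most $D(U_1\times U_2)$ per unit factor, and use Remark \ref{re:generating-set-minimum-size} for the refined bound and the trivial-units case. Your explicit verification that a minimum-size $A_i$ splits as $A_i^u\sqcup A_i'$ with $|A_i'|=k_i$ (and that $(1,1)\notin C$) is a detail the paper leaves implicit, but the argument is the same.
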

\begin{proof}
 Let $A_1, A_2$ be generating sets of minimum size of $M_1,M_2$, respectively, such that $N(M_1,A_1)=N(M_1)$ and $N(M_2,A_2)=N(M_2)$. Let $B$ be a generating set of $U_1 \times U_2$ of minimum size. Let 
 \begin{equation*}
   C=B \cup (A'_1 \times \{1\} )  \cup (\{1 \} \times A'_2) , 
\end{equation*}
where $A'_i=A_i \setminus U_i$.   
  There exist 
 $x_1,x_2,\ldots,x_{N(M_1)} \in A_1$ and $y_1,y_2,\ldots,y_{N(M_2)} \in A_2$ such that $x_1x_2\ldots x_ {N(M_1)} \in \ker(M_1)$ and $y_1y_2\ldots y_ {N(M_2)} \in \ker(M_2)$. Hence, the pair $(x_1x_2\ldots x_ {N(M_1)},y_1y_2\ldots y_ {N(M_2)})$ belongs to $\ker(M_1 \times M_2)$. The following equality
 $$(x_1x_2\ldots x_ {N(M_1)},y_1y_2\ldots y_ {N(M_2)})=\prod_{i=1}^{N(M_1)}(x_i,1) \prod_{j=1}^{N(M_2)}(1,y_j),$$ 
implies that
$$l_C((x_1x_2\ldots x_ {N(M_1)},y_1y_2\ldots y_ {N(M_2)}))  \leq \sum_{i=1}^{N(M_1)} l_C(x_i,1) + \sum_{j=1}^{N(M_2)} l_C(1,y_j).$$  
For $1 \leq i \leq N(M_1)$, if $x_i \in A'_1$ then we have $l_C(x_i,1)=1$; otherwise, we have $l_C(x_i,1)\leq \diam(U_1 \times U_2,B)$. For $1 \leq i \leq N(M_2)$, if $y_i \in A'_2$ then we have $l_C(1,y_i)=1$; otherwise, we have $l_C(1,y_i)\leq \diam(U_1 \times U_2,B)$. Let
$$s_1=|\{x_1,x_2,\ldots,x_{N(M_1)}\} \cap A'_1|,$$ and $$s_2=|\{y_1,y_2,\ldots,y_{N(M_2)}\} \cap A'_2|.$$ Then the length of $(x_1x_2\ldots x_ {N(M_1)},y_1y_2\ldots x'_ {N(M_2)})$, in the generating set $C$, is at most
\begin{align}\label{M1xM2}
&s_1+s_2+(N(M_1)+N(M_2)- (s_1+s_2))\diam(U_1 \times U_2,B)\\ 
=&(N(M_1)+N(M_2))\diam(U_1 \times U_2,B) \nonumber \\
+&(1-\diam(U_1 \times U_2,B))(s_1+s_2)\nonumber.
\end{align}
 The upper bound in \eqref{M1xM2} depend on the integers $s_1,s_2$ and the generating set $B$. Now we try to remove these parameters from the proposed upper bound. Since  
$1-\diam(U_1 \times U_2,B) \leq 0$ and $s_1+s_2 \geq 0$ then 
\begin{align}
&(N(M_1)+N(M_2))\diam(U_1 \times U_2,B) \nonumber \\+
&(1-\diam(U_1 \times U_2,B))(s_1+s_2)\nonumber \\
\leq & (N(M_1)+N(M_2))\diam(U_1 \times U_2,B) .\label{dia}
\end{align} 
Substituting $D(U_1 \times U_2)$ for $\diam(U_1 \times U_2,B)$ in \eqref{dia}
establishes the first statement of the theorem.

Now we prove the second statement. Let $\rank(U_1 \times U_2)= \rank(U_1)+\rank(U_2$. According to Remark \ref{re:generating-set-minimum-size}, the set $A=(\{1 \} \times A_2) \cup (A_1 \times \{1\})$ is a generating set of $M_1 \times M_2$ of minimum size. Suppose that $N(M_1,A_1)=N(M_1)=t_1$ and $N(M_2,A_2)=N(M_2)=t_2$. There exist $a_1,a_2,\ldots,a_{t_1} \in A_1, ~a'_1,a'_2,\ldots,a'_{t_2} \in A_2$ such that $a_1a_2\ldots a_{t_1} \in \ker(M_1),~a'_1a'_2\ldots a'_{t_2} \in \ker(M_2)$. So, we have $(a_1a_2\ldots a_{t_1},a'_1a'_2 \ldots a'_{t_2} ) \in \ker(M_1 \times M_2)$. On the other hand, the length of $(a_1a_2\ldots a_{t_1},a'_1a'_2 \ldots a'_{t_2} )$ with respect to $A$ is at most $t_1+t_2$. It follows that 
 \begin{equation}\label{N}
 N(M_1 \times M_2 ) \leq N(M_1) + N(M_2),
 \end{equation}
 which is the desired conclusion. 
 For the case that $D(U_1 \times U_2)=0$ we have $U_1\times U_2=U_1=U_2=\{1\}$. According to Remark \ref{re:generating-set-minimum-size}, the set $A=(\{1 \} \times A_2) \cup (A_1 \times \{1\})\setminus \{(1,1)\}$ is a generating set of $M_1 \times M_2$ of minimum size.  Suppose that $N(M_1,A_1)=N(M_1)=t_1$ and $N(M_2,A_2)=N(M_2)=t_2$. There exist $a_1,a_2,\ldots,a_{t_1} \in A_1\setminus \{1\}, ~a'_1,a'_2,\ldots,a'_{t_2} \in A_2\setminus \{1\}$ such that $a_1a_2\ldots a_{t_1} \in \ker(M_1),~a'_1a'_2\ldots a'_{t_2} \in \ker(M_2)$. So, we have $(a_1a_2\ldots a_{t_1},a'_1a'_2 \ldots a'_{t_2} ) \in \ker(M_1 \times M_2)$. On the other hand, the length of $(a_1a_2\ldots a_{t_1},a'_1a'_2 \ldots a'_{t_2} )$ with respect to  $A\setminus \{(1,1)\}$ is at most $t_1+t_2$. It follows that 
 \begin{equation}\label{N}
 N(M_1 \times M_2 ) \leq N(M_1) + N(M_2),
 \end{equation}
 which is the desired conclusion. 
  
\end{proof} 

The remainder of this section is devoted to the computation of $N(T_n \times T_m)$ for $n,m \geq 3$.
\begin{lem}\label{Sn-generating-set}
 For $n \geq 3$ the symmetric group ${S}_n$ can be generated by two elements of coprime order. 
 \end{lem}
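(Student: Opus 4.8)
The plan is to distinguish the cases $n$ odd and $n$ even, exhibiting in each an explicit pair of generators of $S_n$ whose orders are coprime. For $n$ odd I would invoke the classical fact that $S_n=\langle (1,2),\,(1,2,\ldots,n)\rangle$: the transposition $(1,2)$ has order $2$ and the $n$-cycle $(1,2,\ldots,n)$ has order $n$, and since $n$ is odd we have $\gcd(2,n)=1$, so nothing more is required.

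For $n$ even the pair above fails to have coprime orders, so instead I would take $c=(1,2,\ldots,n-1)$ and the transposition $t=(n-1,n)$. Their orders are $n-1$ and $2$, and $n-1$ is odd because $n$ is even, hence they are coprime; the only real content is to check that $\langle c,t\rangle=S_n$. For this I would use the conjugation identity $\sigma(a,b)\sigma^{-1}=(\sigma(a),\sigma(b))$: since $c$ fixes $n$ and maps $n-1$ to $1$, one obtains $ctc^{-1}=(1,n)$, and then, iterating, $c^{k}(1,n)c^{-k}=(k+1,n)$ for $0\le k\le n-2$, so that $\langle c,t\rangle$ contains every transposition $(i,n)$ with $1\le i\le n-1$. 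Since $(i,j)=(i,n)(j,n)(i,n)$, the subgroup $\langle c,t\rangle$ then contains all transpositions and hence equals $S_n$. Note that $n\ge 3$ guarantees $n-1\ge 2$, so $c$ is a genuine non-identity permutation and the argument is valid throughout the stated range (in particular for $n=3$ in the odd case and $n=4$ in the even case).

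I do not expect any serious obstacle: once the parity split is made, the odd case is immediate from a standard generation fact, and the even case reduces to the elementary conjugation bookkeeping above, whose only mild subtlety is to confirm that the small values $n=3,4$ are covered — which they are.
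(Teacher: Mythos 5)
Your proof is correct and takes essentially the same approach as the paper: split on the parity of $n$, use the classical pair consisting of the $n$-cycle and a transposition when $n$ is odd, and an $(n-1)$-cycle together with a transposition when $n$ is even. The only difference is in the even-case verification — the paper notes that its $(n-1)$-cycle times the transposition equals the full $n$-cycle, reducing generation to the classical fact, whereas you check generation directly by conjugating to obtain all transpositions $(i,n)$; both arguments are valid.
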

 \begin{proof}
  Define the permutations  $a,a'$ and $b$ as follows:
$$ a=  \begin{pmatrix}
1 & 2 & 3 & \ldots & n\\
2 & 3 & 4 & \ldots & 1
\end{pmatrix}, 
 a'=  \begin{pmatrix}
1 & 2 & 3 & \ldots & n\\
1 & 3 & 4 & \ldots & 2
\end{pmatrix}
 $$ and
$$b=  \begin{pmatrix}
1 & 2 & 3 & \ldots & n\\
2 & 1 & 3 & \ldots & n
\end{pmatrix}.$$ 
It is known that the full cycle $a$ and the transposition $b$ generate $S_n$ \cite{Hungerford:1980}. On the other hand, note that
$a'b = a$. Hence, the sets $\{a,b\}$ and $\{a',b\}$ are generating sets of $S_n$. Note that 
$$ \ord(a)=n,~  \ord(b)=2,~  \ord(a')=n-1.$$  Therefore, for odd $n$, the set $A=\{a,b\}$ and, for even $n$, the set $A'=\{a',b\}$ are the desired generating sets.
\end{proof}
 For $n,m \geq 3$, let $U_1$ and $U_2$ be the group of units of $T_n$ and $T_m$, respectively.
We show that $N(T_n \times T_m) = N(T_n) \times N(T_m)$, while $U_1 \times U_2$ is neither trivial nor $\rank(U_1 \times U_2 )= \rank(U_1)+ \rank(U_2)$. More precisely, we have $U_1=S_n$ and $U_2=S_m$. Let $S_n=\langle a,b \rangle$ and $S_m= \langle c,d \rangle$ such that both of the pairs $a,c$ and $b,d$  are of coprime orders (see Lemma \ref{Sn-generating-set}). We show that $S_n \times S_m = \langle (a,c),(b,d) \rangle.$ It is enough to show that $(a,1),(b,1),(1,c),(1,d) \in \langle (a,c),(b,d) \rangle.$  This is because $a,c$ and $b,d$ are of coprime orders. In fact, if $x,y$ are of coprime order then there exists a power of $(x,y)$ which is equal to $(x,1)$ and there exists a power of $(x,y)$ which is equal to $(1,y)$.  Hence, we have $\rank(S_n \times S_m)=2$, which is not equal to $\rank(S_n) + \rank (S_m).$
  \begin{lem}
  \label{rank-lower-bound2}
  Let $ S= \{ f \in T_n | ~\rank(f) \geq n-1 \}$. If $ \rank (f_1 f_2
  \ldots f_k) = 1$ for some $f_1,f_2,\ldots,f_k \in S$ then at least
  $n-1$ elements of $f_1,f_2,\ldots,f_k$ are of $\rank ~n-1$.
\end{lem}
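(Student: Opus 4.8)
The plan is to track how the rank of the partial products $f_1 f_2 \cdots f_i$ evolves as $i$ increases, using the fact that composing with a permutation leaves the rank unchanged while composing with a transformation of rank $n-1$ can lower it by at most one. Counting how many steps of the second kind are needed to descend from rank $n$ to rank $1$ will force $n-1$ of the $f_i$ to have rank $n-1$.

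First I would isolate the elementary observation that does all the work. If $f \in T_n$ has $\rank(f) = n-1$, then the partition of $X_n$ into the fibres of $f$ has exactly $n-1$ blocks, so precisely one block has two elements and all others are singletons; hence for every subset $T \subseteq X_n$ we have $|Tf| \geq |T| - 1$, since at most one pair of elements of $T$ can be identified by $f$. Applying this with $T = \Im(g)$ gives $\rank(gf) = |\Im(g)f| \geq \rank(g) - 1$ for every $g \in T_n$. If instead $f$ is a permutation, then $f$ is a bijection, so $|\Im(g)f| = |\Im(g)|$ and $\rank(gf) = \rank(g)$.

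Next I would set $m := |\{\, i : \rank(f_i) = n-1 \,\}|$ (the remaining $f_i$ being permutations, as each $f_i \in S$), write $r_i := \rank(f_1 f_2 \cdots f_i)$ for $1 \le i \le k$, and put $r_0 := n$. The observation above gives $r_i \geq r_{i-1} - 1$ whenever $\rank(f_i) = n-1$ and $r_i = r_{i-1}$ whenever $f_i$ is a permutation; the case $i = 1$ is covered by reading $f_1 f_2 \cdots f_0$ as the identity map of rank $r_0 = n$. Telescoping these inequalities over $i = 1, \ldots, k$ yields $r_k \geq n - m$. By hypothesis $r_k = \rank(f_1 f_2 \cdots f_k) = 1$, so $1 \geq n - m$, i.e.\ $m \geq n-1$, which is exactly the assertion of Lemma \ref{rank-lower-bound2}.

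All of the arithmetic is routine; the only step requiring a moment's thought is the counting fact that a rank-$(n-1)$ transformation identifies at most one pair of points, together with its consequence that post-composition can drop the rank by at most one. Once that is in place the remainder is a single telescoping sum, so I do not expect a genuine obstacle here — this lemma is designed to be the tight $r = n-1$ companion of Lemma \ref{rank-lower-bound1}, and it will feed directly into the computation of $N(T_n \times T_m)$.
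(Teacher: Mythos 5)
Your proof is correct and follows essentially the same route as the paper: the paper notes that factors of rank $n$ leave the rank of a product unchanged, reduces to the case where every factor has rank $n-1$, and then invokes Lemma \ref{rank-lower-bound1} with $r=n-1$ to get $1 \geq n-m$. You simply re-derive that special case directly (via the observation that a rank-$(n-1)$ map identifies at most one pair of points, so each such factor lowers the rank of the partial product by at most one) and telescope, which is a self-contained but equivalent argument.
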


\begin{proof}
  For every $f,g \in T_n$, if $\rank(f)=n$ then
  $\rank(fg)=\rank(gf)=\rank(g)$. Thus, without loss of generality, we can
  suppose that all the $f_i$ have rank $n-1$ and apply Lemma \ref{rank-lower-bound1}.
\end{proof}

\begin{lem}\label{generating set}
  Let $n,m \geq 2$. Let $A$ be a generating set of ${S}_n \times {S}_m$ of minimum size and
  $a \in T_n$ be a function of rank $n-1$, $b \in T_m$ be a function
  of rank $m-1$. Then $B=A \cup \{(a,a')\} \cup \{(b',b)\}$, where
  $(a',b')\in {S}_m \times {S}_n$, is a generating set of $T_n \times T_m$
  of minimum size. Furthermore, all generating sets of $T_n \times T_m$
  of minimum size are of this form.
\end{lem}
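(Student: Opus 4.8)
The plan is to prove the two assertions in turn, both by exploiting the $\mathcal{J}$-class structure of the direct product. Throughout I will use that, by Lemma~\ref{lem:generating-set-direct-product} applied with groups of units $U_1=S_n$ and $U_2=S_m$, together with Lemma~\ref{generating-set-Tn} (which shows the rank of $T_n$ modulo $S_n$, and of $T_m$ modulo $S_m$, equals $1$), one has $\rank(T_n\times T_m)=\rank(S_n\times S_m)+2$.

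First I would check that $B$ generates $T_n\times T_m$ and has minimum size. Since $A$ generates $S_n\times S_m$ we have $S_n\times S_m\subseteq\langle B\rangle$, so $(1,(a')^{-1})$ and $((b')^{-1},1)$ lie in $\langle B\rangle$, and hence so do $(a,a')(1,(a')^{-1})=(a,1)$ and $((b')^{-1},1)(b',b)=(1,b)$. As $a$ has rank $n-1$, the set $S_n\cup\{a\}$ generates $T_n$ by Lemma~\ref{generating-set-Tn}, so $\langle(S_n\times\{1\})\cup\{(a,1)\}\rangle=T_n\times\{1\}$; symmetrically $\{1\}\times T_m\subseteq\langle B\rangle$, and then $(f,g)=(f,1)(1,g)$ yields $\langle B\rangle=T_n\times T_m$. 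For minimality, $|B|=|A|+2=\rank(S_n\times S_m)+2$: the two added elements lie outside $S_n\times S_m\supseteq A$ (their first, resp. second, coordinate has rank less than $n$, resp. $m$) and differ from one another (one has a first coordinate of rank $n-1$, the other a first coordinate in $S_n$). By the rank identity above, $|B|=\rank(T_n\times T_m)$.

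For the converse, let $X$ be a generating set of $T_n\times T_m$ with $|X|=\rank(T_n\times T_m)=\rank(S_n\times S_m)+2$. I would use the standard description of Green's preorder on a direct product: the $\mathcal{J}$-classes of $T_n\times T_m$ are the products $J\times J'$ of $\mathcal{J}$-classes of the factors, and $J\times J'\leq_{\mathcal{J}}K\times K'$ iff $J\leq_{\mathcal{J}}K$ and $J'\leq_{\mathcal{J}}K'$. Because the $\mathcal{J}$-order of $T_n$ is the chain of rank classes, the maximum $\mathcal{J}$-class of $T_n\times T_m$ is the group of units $U:=S_n\times S_m$; moreover the only $\mathcal{J}$-classes $\geq_{\mathcal{J}}J_1:=J_{n-1}(T_n)\times S_m$ are $J_1$ and $U$, and the only $\mathcal{J}$-classes $\geq_{\mathcal{J}}J_2:=S_n\times J_{m-1}(T_m)$ are $J_2$ and $U$. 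Now three observations finish the proof. (i) The complement $(T_n\times T_m)\setminus U$ is an ideal while $U$ is a subsemigroup, so any factorization of an element of $U$ into members of $X$ uses only members lying in $U$; hence $X\cap U$ generates $U$ and $|X\cap U|\geq\rank(S_n\times S_m)$. (ii) Fixing an element of $J_1$ and writing it as a product of members of $X$, every factor is $\mathcal{J}$-above it, hence lies in $J_1\cup U$, and they cannot all lie in $U$; so $X\cap J_1\neq\varnothing$. (iii) Symmetrically $X\cap J_2\neq\varnothing$. Since $U$, $J_1$, $J_2$ are pairwise disjoint, adding (i)--(iii) gives $|X|\geq\rank(S_n\times S_m)+1+1=|X|$; hence all inequalities are equalities and $X$ is the disjoint union of a $\rank(S_n\times S_m)$-element generating set $A$ of $U$ (automatically of minimum size), a single element $(a,a')\in J_1$ (so $\rank(a)=n-1$ and $a'\in S_m$), and a single element $(b',b)\in J_2$ (so $b'\in S_n$ and $\rank(b)=m-1$). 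Thus $X=A\cup\{(a,a')\}\cup\{(b',b)\}$ has the stated form.

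The one point requiring genuine care is the description of the $\mathcal{J}$-classes of $T_n\times T_m$, and in particular that $J_1$ and $J_2$ are exactly the two $\mathcal{J}$-classes immediately below the group of units; but this follows routinely from the product description of $\leq_{\mathcal{J}}$ and the linearity of the $\mathcal{J}$-order on $T_n$ and $T_m$, so I expect it to cost only a few lines. The rest is bookkeeping with the ideal $(T_n\times T_m)\setminus U$ and the principle that every factor of a product is $\mathcal{J}$-above the product, just as in the proofs above for $T_n$, $PT_n$ and $I_n$.
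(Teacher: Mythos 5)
Your proposal is correct and follows essentially the same route as the paper: you verify directly that $B$ generates by recovering $(a,1)$ and $(1,b)$ from the two added elements, and for the converse you use exactly the paper's observation that any generating set must contain a generating set of the maximal $\mathcal{J}$-class $S_n\times S_m$ (whose complement is an ideal) and must meet each of the two $\mathcal{J}$-classes $J_{n-1}(T_n)\times S_m$ and $S_n\times J_{m-1}(T_m)$ lying immediately below it. Your explicit counting via $\rank(T_n\times T_m)=\rank(S_n\times S_m)+2$ merely makes precise what the paper leaves implicit; the only point worth noting is that your appeal to Lemma~\ref{generating-set-Tn} formally covers only $n\geq 3$, but the case $n=2$ (where $S_2$ together with a constant map generates $T_2$) is immediate, and the paper's own proof relies on the same fact.
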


\begin{proof}
  First we show that $B$ generates $T_n \times T_m$. Since
  $$(a,1)=(a,a')(1,a'^{-1}) \quad \mbox{and} \quad (1,b)=(b',b)(b'^{-1},1),$$
 $B$ generates $(a,1),(1,b)$. Let $(f,g)\in T_n \times T_m$.
  Because $f \in T_n$, there exist $f_1,f_2,\ldots,f_k \in {S}_n
  \cup \{a\}$ such that $f=f_1 f_2 \ldots f_k$. Because $g \in T_m$, there exist $g_1,g_2,\ldots,g_l \in {S}_m \cup \{b\}$ such that
  $g=g_1 g_2 \ldots g_l$. Then, we have
  $$ (f,g)=(f_1,1)(f_2,1) \ldots (f_k,1)(1,g_1)(1,g_2) \ldots (1,g_l).$$
  Every $(f_i,1)$ either is $(a,1)$ or belongs to ${S}_n \times {S}_m$
  and every $(1,g_i)$ either is $(1,b)$ or belongs to ${S}_n \times
  {S}_m$. Therefore ,$B$ generates $(f_i,1), (1,g_j) $ for
  $i=1,2,\ldots,k$ and $j=1,2,\ldots,l$. Consequently, $B$ generates
  $(f,g)$.

  Let $C$ be a generating set of $T_n \times T_m$ of minimum size.
  Then, $C$ must contain a generating set of the maximum
  $\mathcal{J}$-class which is ${S}_n \times {S}_m$. On the other hand, the
  maximum $\mathcal{J}$-class ${S}_n \times {S}_m$ is a subsemigroup; hence,
  one cannot obtain any elements in the $\mathcal{J}$-classes below by
  multiplying just elements on the maximum $\mathcal{J}$-class.
  Therefore, $C$ must contain some elements of some $\mathcal{J}$-classes
  below the maximum $\mathcal{J}$-class. There are exactly two
  $\mathcal{J}$-classes which are below the maximum
  $\mathcal{J}$-class and above all other $\mathcal{J}$-classes.
  Therefore, $C$ must intersect each of them in at least one element.
  Note that all such elements have the respective forms $(a,a')$ and
  $(b',b)$ as described in the statement of the lemma. This shows that
  $A \cup \{(a,a')\} \cup \{(b',b)\}$ is a generating set of minimum
  size and all generating sets of minimum size are of this form.
\end{proof}

\begin{prop}
  \label{number}
	If $T_n,T_m $ are two full transformation semigroups, then 
  $$N(T_n \times T_m)= m+n-2.$$
\end{prop}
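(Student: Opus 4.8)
The plan is to establish the two inequalities $N(T_n\times T_m)\le n+m-2$ and $N(T_n\times T_m)\ge n+m-2$ separately, using Lemma~\ref{generating set} to pin down the minimum-size generating sets of $T_n\times T_m$ and Lemma~\ref{rank-lower-bound2} to control ranks coordinatewise. Recall first that, by Corollary~\ref{N-transformation-cor1}, $N(T_n)=n-1$ and $N(T_m)=m-1$, and that $\ker(T_n\times T_m)=\ker(T_n)\times\ker(T_m)$ is precisely the set of pairs of constant maps.

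For the upper bound, I would take the \emph{staircase} map $\theta\in T_n$ given by $i\theta=\max\{1,i-1\}$, so that $\theta^{n-1}$ is a constant map, and the analogous $\eta\in T_m$ with $\eta^{m-1}$ constant. Fixing a minimum-size generating set $A$ of $S_n\times S_m$ (which has two elements, since $\rank(S_n\times S_m)=2$) and taking the identities $1\in S_m$ and $1\in S_n$ in the roles of $a'$ and $b'$, Lemma~\ref{generating set} gives that $B=A\cup\{(\theta,1)\}\cup\{(1,\eta)\}$ is a generating set of $T_n\times T_m$ of minimum size. Then $(\theta,1)^{n-1}(1,\eta)^{m-1}=(\theta^{n-1},\eta^{m-1})$ is a pair of constant maps, hence lies in $\ker(T_n\times T_m)$, and since $(\theta,1),(1,\eta)\in B$ its length with respect to $B$ is at most $(n-1)+(m-1)=n+m-2$; so $N(T_n\times T_m)\le n+m-2$.

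For the lower bound, let $C$ be an arbitrary generating set of $T_n\times T_m$ of minimum size. By Lemma~\ref{generating set}, $C=A\cup\{(a,a')\}\cup\{(b',b)\}$ with $A\subseteq S_n\times S_m$, $\rank(a)=n-1$, $\rank(b)=m-1$ and $a',b'$ permutations; in particular $(a,a')$ is the \emph{only} member of $C$ whose first component has rank $<n$, and $(b',b)$ is the only member whose second component has rank $<m$. Suppose $g_1g_2\cdots g_k\in\ker(T_n\times T_m)$ with each $g_i\in C$. Projecting onto the first coordinate, every first component of a $g_i$ has rank $\ge n-1$ while their product has rank $1$, so Lemma~\ref{rank-lower-bound2} forces at least $n-1$ of the $g_i$ to have first component of rank $n-1$, i.e.\ at least $n-1$ of the $g_i$ equal $(a,a')$. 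Symmetrically, at least $m-1$ of the $g_i$ equal $(b',b)$. Since $(a,a')\ne(b',b)$, these index sets are disjoint, whence $k\ge(n-1)+(m-1)=n+m-2$; thus $N(T_n\times T_m,C)\ge n+m-2$ for every such $C$, and therefore $N(T_n\times T_m)\ge n+m-2$.

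Combining the two bounds gives $N(T_n\times T_m)=n+m-2$. The step that will need the most care is the lower bound: one must check that in a minimum-size generating set exactly one generator lowers the rank in each coordinate, so that the counting bound of Lemma~\ref{rank-lower-bound2} (at least $n-1$ factors of rank $n-1$ in the first coordinate) genuinely becomes at least $n-1$ occurrences of the single element $(a,a')$, and one must verify that the hypotheses of Lemma~\ref{rank-lower-bound2} survive the projection, namely that every first component occurring has rank $\ge n-1$ (which holds because the generators coming from $A$ lie in $S_n\times S_m$ and the first component of $(b',b)$ is a permutation).
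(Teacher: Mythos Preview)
Your argument is correct and follows essentially the same route as the paper: the same staircase maps for the upper bound via Lemma~\ref{generating set}, and the same coordinatewise application of Lemma~\ref{rank-lower-bound2} for the lower bound, with the disjointness coming from the fact that $(a,a')$ and $(b',b)$ are the unique generators dropping rank in the respective coordinates. The only small omission is that Lemma~\ref{generating set} is stated for $n,m\ge 2$, so you should dispose of the degenerate cases $n=1$ or $m=1$ separately (there $T_1\times T_m\cong T_m$ and the formula reduces to Corollary~\ref{N-transformation-cor1}); the paper does this in its opening sentence.
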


\begin{proof}If $n=m=1$ then we have $N(T_1\times T_1)=0=1+1-2$. If $n=1$ or $m=1$ then the equality  holds by Corollary \ref{N-transformation-cor1}. 
   Suppose that $n,m \geq2$. Let $A$ be a generating set of ${S}_n \times {S}_m$ of minimum size.
  Consider functions $\alpha,\beta$ defined by 
  \begin{align*}
    \alpha&=\left( 
      \begin{array}{cccccc}
        1 & 2 & 3 & \ldots & n\\
        1 & 1 & 2 & \ldots & n-1\
      \end{array} \right),\\
    \beta&=\left( 
      \begin{array}{cccccc}
        1 & 2 & 3 & \ldots & m\\
        1 & 1 & 2 & \ldots & m-1\
      \end{array} \right).
  \end{align*}
  By Lemma \ref{generating set}, $ B= A \cup \{(\alpha,1)\} \cup \{(1,\beta)\}$ is a
  generating set of $T_n \times T_m$ of minimum size. We have
  $$ (\alpha,1)^{n-1} (1,\beta)^{m-1}
  =(\alpha^{n-1},1)\,(1,\beta^{m-1}) %
  =(\alpha^{n-1},\beta^{m-1}).$$
 Since the functions $\alpha^{n-1}$ and $\beta
  ^{m-1}$ are constant, we have $(\alpha,1)^{n-1}
  (1,\beta)^{m-1} \in \ker(T_n) \times \ker(T_m)$. This shows
  that $N(T_n \times T_m) \leq n-1 + m-1=m+n-2$.

  Next, we prove that $N(T_n \times T_m) \geq m+n-2$. Suppose
  $$ B=A \cup \{(a,a')\} \cup \{(b',b)\}$$
  is a generating set of $T_n \times T_m$ of minimum size and there are
  $$(f_1,g_1), (f_2,g_2), \ldots ,(f_k,g_k) \in B$$ such that 
  $$(f_1,g_1)(f_2,g_2)\ldots (f_k,g_k) \in \ker(T_n) \times \ker((T_m).$$
  Then $f_1 f_2 \ldots f_k \in \ker(T_n)$ and $g_1 g_2 \ldots g_k \in \ker(T_m)$.
  By Lemma \ref{rank-lower-bound2}, at least $n-1$ elements in $ \{
  f_1,f_2,\ldots, f_k \}$ are of rank $n-1$ and $m-1$ elements of $g_1, g_2, \ldots g_k$ are of rank $m-1$. Since every generator has at least one invertible component, the two conditions cannot be met by the same factor and therefore there are at least $m+n-2$ factors. 
\end{proof}
  
With the same argument, we can generalize Lemma \ref{generating set}
and Proposition \ref{number} to any finite product of full transformation
semigroups.
  
\begin{lem} 
  Let $A$ be a generating set of ${S}_{n_1} \times {S}_{n_2} \times \cdots
  \times {S}_{n_k}$ of minimum size and
  $$\alpha_t=(a_1,a_2,\ldots, a_t,\ldots, a_k)
  \in T_{n_1} \times T_{n_2} \times \cdots \times T_{n_k}
  \quad t=1,2,\ldots k$$
  such that 
  $$\rank(a_t)=n_t-1 ~\mbox{and} \quad a_i \in S_{n_i}
  \quad i \in \{1,2,\ldots,k\} \smallsetminus \{t\}.$$
  Then $B=A \cup (\bigcup_{t=1}^k \{\alpha_t\})$ is a generating set
  of $T_{n_1} \times T_{n_2} \times \cdots\times T_{n_k}$ of minimum size.
  Furthermore, all generating sets of $T_{n_1} \times T_{n_2} \times
  \cdots\times T_{n_k}$ of minimum size are of this form.
\end{lem}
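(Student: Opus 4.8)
The plan is to follow the structure of the proof of Lemma~\ref{generating set} (the case $k=2$), replacing the two-coordinate bookkeeping by a uniform coordinatewise argument; we may assume $n_t\ge 2$ for all $t$ (a trivial factor $T_1$ may be discarded). To see that $B$ generates $T_{n_1}\times\cdots\times T_{n_k}$, for each $t$ let $e_t\colon T_{n_t}\to T_{n_1}\times\cdots\times T_{n_k}$ be the homomorphism placing its argument in coordinate $t$ and the identity elsewhere. Since $\langle A\rangle=S_{n_1}\times\cdots\times S_{n_k}=:U$, the subsemigroup $\langle B\rangle$ already contains $e_t(\sigma)$ for every $\sigma\in S_{n_t}$. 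Writing $\alpha_t=(a_1,\dots,a_k)$ with $a_i\in S_{n_i}$ for $i\ne t$ and $\rank(a_t)=n_t-1$, right multiplication of $\alpha_t$ by $(a_1^{-1},\dots,a_{t-1}^{-1},1,a_{t+1}^{-1},\dots,a_k^{-1})\in U\subseteq\langle B\rangle$ yields $e_t(a_t)\in\langle B\rangle$. By Lemma~\ref{generating-set-Tn} (the case $n_t=2$ being an immediate check), $S_{n_t}$ together with the rank-$(n_t-1)$ map $a_t$ generates $T_{n_t}$, so $e_t(h)\in\langle B\rangle$ for all $h\in T_{n_t}$; since $(f_1,\dots,f_k)=e_1(f_1)e_2(f_2)\cdots e_k(f_k)$, we conclude $\langle B\rangle=T_{n_1}\times\cdots\times T_{n_k}$.

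Next I would analyse the $\mathcal{J}$-order of the product. Using $\mathcal{J}=\mathcal{D}$ in a finite semigroup together with the elementary fact that $(s_1,\dots,s_k)\le_{\mathcal{J}}(s_1',\dots,s_k')$ in a finite direct product iff $s_i\le_{\mathcal{J}}s_i'$ for every $i$, the unique maximal $\mathcal{J}$-class of $T_{n_1}\times\cdots\times T_{n_k}$ is $U=S_{n_1}\times\cdots\times S_{n_k}$ (each $S_{n_i}$ being the group of units of $T_{n_i}$), and it is a subsemigroup. Writing $J_{n_t-1}=\{f\in T_{n_t}:\rank(f)=n_t-1\}$, the $\mathcal{J}$-classes maximal among those strictly below $U$ are exactly the $k$ classes
\[
  J^{(t)} := S_{n_1}\times\cdots\times J_{n_t-1}\times\cdots\times S_{n_k}\qquad(1\le t\le k).
\]
Indeed the $J^{(t)}$ are pairwise $\mathcal{J}$-incomparable (their rank profiles differ in two coordinates), any $\mathcal{J}$-class strictly below $U$ has some coordinate $i$ of rank $\le n_i-1$ and hence lies $\le_{\mathcal{J}}J^{(i)}$, and the only $\mathcal{J}$-classes $\ge_{\mathcal{J}}J^{(t)}$ are $J^{(t)}$ and $U$.

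Finally, I would deduce minimality and uniqueness. Let $C$ be any generating set. If $u\in U$ is a product of elements of $C$, every factor is $\mathcal{J}$-above $u$ and hence lies in the top class $U$, so $C\cap U$ generates $U$ and $|C\cap U|\ge\rank(U)$. Likewise, if some $x\in J^{(t)}$ is a product of elements of $C$, every factor lies in $J^{(t)}\cup U$; were none in $J^{(t)}$, all would lie in the subsemigroup $U$ and so would $x$ — a contradiction, so $C\cap J^{(t)}\ne\varnothing$. As $U,J^{(1)},\dots,J^{(k)}$ are pairwise disjoint, $|C|\ge\rank(U)+k$, and since $B=A\cup\{\alpha_1,\dots,\alpha_k\}$ with $A\subseteq U$, $|A|=\rank(U)$, and the distinct $\alpha_t$ lying in the $J^{(t)}$, we get $|B|=\rank(U)+k$; combined with the first paragraph this shows $B$ has minimum size and $\rank(T_{n_1}\times\cdots\times T_{n_k})=\rank(U)+k$. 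If moreover $|C|=\rank(U)+k$, all the above inequalities are forced to be equalities: $C$ meets $U$ in exactly $\rank(U)$ elements (which then form a minimum generating set of $S_{n_1}\times\cdots\times S_{n_k}$), meets each $J^{(t)}$ in exactly one element, and has no other elements; and an element of $J^{(t)}$ is by definition of the form $(a_1,\dots,a_k)$ with $a_i\in S_{n_i}$ for $i\ne t$ and $\rank(a_t)=n_t-1$. I expect the only real obstacle to be the middle paragraph — verifying carefully that the $\mathcal{J}$-order of the product is the coordinatewise order and that there are precisely $k$ such "second-layer" classes with every other non-top class lying beneath one of them — the remaining steps being essentially those of Lemma~\ref{generating set}, modulo care with the composition order in the inversions $e_i(a_i^{-1})$.
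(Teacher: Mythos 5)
Your argument is correct and is essentially the paper's own approach: the paper proves the case $k=2$ (Lemma \ref{generating set}) by exactly this scheme — generate each $T_{n_t}$ coordinatewise after stripping the invertible components of $\alpha_t$, then count using the fact that the top $\mathcal{J}$-class $S_{n_1}\times\cdots\times S_{n_k}$ is a subsemigroup and that there are exactly $k$ pairwise incomparable $\mathcal{J}$-classes immediately below it — and states the general case as following "with the same argument." Your middle paragraph just makes explicit the coordinatewise description of the $\mathcal{J}$-order that the paper asserts informally.
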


\begin{prop}
 If $T_{n_i}$ for $1 \leq i \leq k$ are full transformation semigroups, then $$N(T_{n_1} \times T_{n_2} \times \cdots \times {T}_{n_k})
  = n_1+n_2+\cdots +n_k -k. $$ 
\end{prop}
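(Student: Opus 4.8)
The plan is to run the same two-sided estimate that proves Proposition~\ref{number}, now with $k$ factors in place of two: I replace Lemma~\ref{generating set} by its $k$-fold version (the preceding lemma), and I apply Lemma~\ref{rank-lower-bound2} in each coordinate separately. First note that if some $n_i=1$ then the factor $T_{n_i}$ is trivial and contributes nothing to either side, so we may assume $n_i\geq 2$ for every $i$; recall also that the kernel of a finite direct product is the product of the kernels of the factors, so $\ker(T_{n_1}\times\cdots\times T_{n_k})=\ker(T_{n_1})\times\cdots\times\ker(T_{n_k})$.

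For the upper bound, I would fix a minimum-size generating set $A$ of $S_{n_1}\times\cdots\times S_{n_k}$, and for $t=1,\ldots,k$ take $\alpha_t$ to be the tuple, as in the preceding lemma, whose coordinates other than the $t$-th are identity permutations and whose $t$-th coordinate is the rank-$(n_t-1)$ transformation of $X_{n_t}$ sending $1$ and $2$ to $1$ and sending $j$ to $j-1$ for $3\leq j\leq n_t$ (the map playing the role of $\alpha$ in the proof of Proposition~\ref{number}). By the preceding lemma $B=A\cup\bigcup_{t=1}^k\{\alpha_t\}$ is a minimum-size generating set of $T_{n_1}\times\cdots\times T_{n_k}$. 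Since $\alpha_t^{\,n_t-1}$ has constant $t$-th coordinate and trivial coordinates elsewhere, the product $\alpha_1^{\,n_1-1}\alpha_2^{\,n_2-1}\cdots\alpha_k^{\,n_k-1}$ has every coordinate a constant map and therefore lies in $\ker(T_{n_1}\times\cdots\times T_{n_k})$; its length with respect to $B$ is at most $\sum_{t=1}^k(n_t-1)=n_1+\cdots+n_k-k$. Hence $N(T_{n_1}\times\cdots\times T_{n_k})\leq n_1+\cdots+n_k-k$.

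For the lower bound, let $B$ be an arbitrary minimum-size generating set. By the preceding lemma $B=A\cup\bigcup_{t=1}^k\{\alpha_t\}$, where $A$ generates $S_{n_1}\times\cdots\times S_{n_k}$ and each $\alpha_t$ has its $t$-th coordinate of rank $n_t-1$ and all of its other coordinates permutations; in particular $\alpha_i$ is the unique element of $B$ whose $i$-th coordinate is not a permutation. Suppose a product of $\ell$ elements of $B$ lies in $\ker(T_{n_1}\times\cdots\times T_{n_k})$. Projecting onto coordinate $i$, the product of the $i$-th coordinates of the $\ell$ chosen factors is a constant transformation of $T_{n_i}$, while each of those $i$-th coordinates has rank at least $n_i-1$; so Lemma~\ref{rank-lower-bound2} forces at least $n_i-1$ of the $\ell$ factors to have $i$-th coordinate of rank exactly $n_i-1$, and each such factor must equal $\alpha_i$. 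As $\alpha_1,\ldots,\alpha_k$ are pairwise distinct, the positions occupied by $\alpha_i$ and by $\alpha_j$ are disjoint for $i\neq j$, whence $\ell\geq\sum_{i=1}^k(n_i-1)=n_1+\cdots+n_k-k$. Combining the two bounds yields the asserted equality.

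The argument is essentially bookkeeping and I do not anticipate a genuine obstacle; the one place that demands care is the lower bound, where I must verify that Lemma~\ref{rank-lower-bound2} really applies in each coordinate (every coordinate of every generator has rank at least $n_i-1$, since the generators other than $\alpha_i$ carry a permutation in coordinate $i$) and that the conclusion "at least $n_i-1$ factors of rank $n_i-1$ in coordinate $i$" does translate into at least $n_i-1$ occurrences of the single element $\alpha_i$, with the occurrences for distinct $i$ sitting in disjoint positions of the product.
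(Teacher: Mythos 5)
Your proof is correct and is essentially the argument the paper intends: the paper explicitly states that this proposition follows "with the same argument" as Proposition \ref{number}, and your two-sided estimate (upper bound from the explicit generators $\alpha_t$ in the $k$-fold version of Lemma \ref{generating set}, lower bound by projecting to each coordinate and applying Lemma \ref{rank-lower-bound2} together with the uniqueness of the generator $\alpha_i$ that is non-invertible in coordinate $i$) is exactly that generalization written out. The bookkeeping points you flag — disjointness of the positions occupied by distinct $\alpha_i$'s and the reduction to $n_i\geq 2$ — are handled correctly.
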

\subsection{Wreath product}\label{su:wreath-product}
 By the prime decomposition theorem, every finite semigroup is a divisor of an iterated wreath product of its simple group divisors and the three-element monoid $U_2$ consisting of two right zeros and one identity element \cite{Rhodes&Steinberg:2009}. So we are looking for the analogues for the
wreath product of the results which we have obtained for the direct
product. We consider the \emph{wreath product} \index{product!wreath product} of transformation monoids as usual, that is  
$$(X,S)\wr (Y,T)=(X\times Y , S^Y\rtimes T),$$\index[notation]{$(X,S)\wr (Y,T)$} where the action defining the \emph{semidirect product} \index{product!semidirect product} is given by 
\begin{align*}
\quad T \times S^Y \rightarrow S^Y \\ 
(t,f)\mapsto \,^t\!f,\\\\
^t\!f:Y\rightarrow S \\ \index[notation]{$^t\!f$}
 y\mapsto (yt)f
\end{align*}
and the action of $S^Y \rtimes T$ on the set $X \times Y$ is described by 
$$ (x,y)(f,t)=(x(yf),yt).$$ 
 Note that we apply functions on the right. 
 Our aim is to give an upper bound for $N(S^Y \rtimes T)$\index[notation]{$S^Y \rtimes T$} in which $(X,S)$ and $(Y,T)$ are two transformation monoids and $S^Y \rtimes T$ is the semigroup of the wreath product  $(X,S) \wr (Y,T)$.
 Here, we introduce some notation which we use subsequently. For  $s \in S$ and $y \in Y$ let $(s)_y: Y \rightarrow S$\index[notation]{$(s)_y$} be the function defined by

$$
z(s)_y= \left \{
\begin{array}{ll}
s & \text{if} ~ z=y\\
1 & \text{otherwise}\\
\end{array} \right.
$$
and
for every $s \in S$ let $\bar{s}: Y \rightarrow S$ be the function defined by 
$y\bar{s}=s.$ \index[notation]{$\bar{1}$}

For a given monoid $S$ denote by $U_S$ \index[notation]{$U_S$} its \emph{group of units}\index{group of units}. We use the notation $\prod_{i=1}^n s_i$ \index[notation]{$\prod_{i=1}^n s_i$} for $s_1 s_2 \ldots s_n$ even in the case when the multiplication is not commutative.

\begin{lem}
\label{minimum ideal}
Let $(X,S)$ and $(Y,T)$ be two transformation monoids. The set
$$ E=\{(f,t): f \in \ker(S)^Y ,~ t \in \ker(T) , \text{ $f $ is a constant map} \} $$ is contained in the minimum ideal  of $S^Y \rtimes T$. 
\end{lem}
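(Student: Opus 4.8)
The plan is to show that every element of $E$ lies in the minimum ideal $\ker(W)$, where $W:=S^Y\rtimes T$. Since $W$ is finite, $\ker(W)$ exists, is non-empty and is a simple semigroup; hence, fixing any $(g_0,u_0)\in\ker(W)$, we have $W(g_0,u_0)W=\ker(W)$, so it suffices to realise an arbitrary $(\bar c,t)\in E$ (with $c\in\ker(S)$, $t\in\ker(T)$) as a product $(h_1,v_1)(g_0,u_0)(h_2,v_2)$ with $h_1,h_2\in S^Y$ and $v_1,v_2\in T$. Before constructing it I would first locate $\ker(W)$ a little. Using the multiplication rule $(f_1,t_1)(f_2,t_2)=(f_1\cdot{}^{t_1}\!f_2,\,t_1t_2)$ together with the fact that $\ker(S)$ and $\ker(T)$ are ideals, one checks without difficulty that
$$J:=\{(f,t)\in W:\ (y)f\in\ker(S)\ \text{for all}\ y\in Y,\ \ t\in\ker(T)\}$$
is an ideal of $W$; it is non-empty since $E\subseteq J$, so $\ker(W)\subseteq J$. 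Consequently the chosen element $(g_0,u_0)\in\ker(W)$ satisfies $(y)g_0\in\ker(S)$ for every $y\in Y$ and $u_0\in\ker(T)$.

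Now I would assemble the translate. For the $T$-coordinate: as $u_0\in\ker(T)$ and $\ker(T)$ is simple, $Tu_0T=\ker(T)$, so there are $v_1,v_2\in T$ with $v_1u_0v_2=t$. For the $S^Y$-coordinate, I take $h_2$ to be the constant map $\bar c$. A direct computation with the multiplication rule shows that the $S^Y$-component of $(h_1,v_1)(g_0,u_0)(\bar c,v_2)$, evaluated at $y\in Y$, is $(y)h_1\cdot a_y\cdot c$, where $a_y:=((y)v_1)g_0$, and $a_y\in\ker(S)$ by the previous paragraph. Since $a_y,c\in\ker(S)$ and $\ker(S)$ is simple, $a_yc$ is $\mathcal L$-equivalent to $c$ (in a finite — hence completely — simple semigroup, $uv\ \mathcal L\ v$ for all $u,v$), so there is $p_y\in S$ with $p_y\,a_yc=c$ (here we use that $S$ is a monoid, so $p_y=1$ is available if $a_yc=c$). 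Defining $h_1$ by $(y)h_1:=p_y$, the element $(h_1,v_1)(g_0,u_0)(\bar c,v_2)$ has $T$-component $v_1u_0v_2=t$ and $S^Y$-component the constant map $\bar c$; hence it equals $(\bar c,t)$, and therefore $(\bar c,t)\in W(g_0,u_0)W=\ker(W)$.

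The point I expect to require the most care — and the reason for first pinning $\ker(W)$ down inside $J$ — is the choice of $h_2$. If one instead tried to write $(\bar c,t)$ as a two-sided translate of an \emph{arbitrary} element of $W$, the transformation $v_1u_0$ of $Y$ could identify distinct points $y,y'$, so the single value of $h_2$ at $((y)v_1)u_0=((y')v_1)u_0$ would have to serve for $y$ and $y'$ simultaneously; taking $h_2=\bar c$ and using that $g_0$ already has all its values in $\ker(S)$ (which forces $a_yc\ \mathcal L\ c$ for every $y$) is exactly what makes all these simultaneous conditions solvable. The remaining steps — that $J$ is an ideal, and that $W(g_0,u_0)W=\ker(W)$ for $(g_0,u_0)\in\ker(W)$ — are routine facts about finite and about simple semigroups.
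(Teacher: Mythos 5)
Your proof is correct and follows essentially the same route as the paper: both first observe that $\ker(S)^Y\times\ker(T)$ is an ideal containing $\ker(S^Y\rtimes T)$, and then exhibit an arbitrary element of $E$ as a two-sided translate of an element of that ideal (equivalently, of the kernel). The only difference is in the construction of the translating pair: by taking the right-hand factor to be the constant map $\bar{c}$ you only need a left multiplier $p_y$ (via the fact that $uv\,\mathcal{L}\,v$ in the completely simple semigroup $\ker(S)$), whereas the paper sandwiches $(yt_1)g$ on both sides and must check that its right factor $k$ is well defined using $\ker(t_1)=\ker(t_1t')$ --- precisely the subtlety you flag and avoid.
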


\begin{proof}
It is easy to check that every two elements in $E$ are
$\mathcal{J}$-related and $ \ker(S)^Y \times  \ker(T)$ is an ideal  of
$S^Y \rtimes T$. Hence, given $(f,t) \in E$ and $(g,t')\in \ker(S)^Y \times
\ker(T)$, it suffices  to show that there exist
$h,k \in S^Y , t_1,t_2 \in T$ such that 
$$(h,t_1)(g,t')(k,t_2)=(f,t).$$
 Since $t,t' \in \ker(T)$, there exist $t_1,t_2 \in \ker(T)$ such that
 $t_1t't_2 =t$. For each $s,s' \in \ker(S)$, there exist elements $h_{s,s'} ,k_{s,s'} \in \ker(S)$ such that $s'=h_{s,s'} s k_{s,s'}$. 
 Define the functions $h,k \in S^Y$  as follows: for each $y \in Y$, let
$$yh= h_{(yt_1)g,yf},$$
$$yk=\left\{ \begin{array}{ll}
k_{(xt_1)g,xf} & \text{if} ~y=xt_1t' ~ \mbox{for some } x \in Y,\\
1 & \text{otherwise}.\\
\end{array} \right.
$$
Note that the function $k$ is well-defined since, as $t_1$ and $t_1t'$ are in the same $\mathcal{R}$-class, the equality  
$\ker(t_1)=\ker(t_1t')$ holds. 
Now, we have
$$
(h,t_1)(g,t')(k,t_2)=(h\,^{t_1}\!g\,^{t_1t'}\!k,t_1t't_2)=(f,t)
$$
and the proof is complete.
\end{proof}
Note that by Lemma \ref{minimum ideal}, the following inequalities hold:
\begin{equation}\label{minimum-ideal-inequality}
E \subseteq \ker(S^Y \rtimes T) \subseteq \ker(S)^Y \times \ker(T).
\end{equation}
The following examples show that for some wreath products the inclusions in the inequalities \eqref{minimum-ideal-inequality} are proper and for the others are not.

In all the following examples, we consider the transformation semigroup $(Y,U_2)$ to be as following. Let $Y=\{1,2\}$ and $\alpha,\beta:Y \rightarrow Y$ be the constant functions $1,2$, respectively. Let $U_2=\{1,\alpha, \beta\}$. Then, $U_2$ acts faithfully on $Y$ and so $(Y,U_2)$ is a transformation semigroup.
\begin{ex}
 Let $(X,G)$ be a finite permutation group. Consider the wreath product $(X,G) \wr (Y,U_2)$. It is easy to see that the minimum ideal  of $G^Y \rtimes U_2$ is the whole  $\ker(G)^Y \times \ker(U_2)$. 
\end{ex}
\begin{ex}
Let $(X,T_3)$ be the full transformation semigroup of degree three. Consider the wreath product $(X,T_3) \wr (Y,U_2)$. Computer calculations give the minimum ideal  of $ T_3 ^Y \rtimes U_2$ to be the set 
$$E=\{(f,t): f \in \ker(T_3)^Y ,~ t \in U_{2} , \text{ $f $ is a constant map} \}.$$ 
\end{ex}
\begin{ex}
 Let $V$ be the transformation monoid generated by identity and two transformations 
\begin{equation}
a= \left(
\begin{array}{rrrrr}
1 &2 & 3 & 4 &5\\
1 & 4 & 1 & 4 & 1
\end{array} \right),
b= \left(
\begin{array}{rrrrr}
1 &2 & 3 & 4 &5\\
3 & 2 & 3 & 2 & 2
\end{array} \right).
\end{equation}
Computer calculations (using Mathematica) give 
the minimum ideal of $V^Y \rtimes U_2$ to have $16$ elements, while $E$ has $8$ elements and $\ker(V)^Y \times \ker(U_2)$ has $32$ elements. Hence, in this example the inequalities \eqref{minimum-ideal-inequality} are proper. 
\end{ex}
 \begin{lem} \label{rank-wreath-monoids}
Let $(X,S)$ and $(Y,T)$ be two transformation monoids. Then 
\begin{equation}\label{eq-rank-wreath}
\rank(S^Y \rtimes T) \geq \rank(S^Y \rtimes U_T) + \rank(T) - \rank(U_T).
\end{equation}
\end{lem}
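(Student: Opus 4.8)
\section*{Proof proposal}

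The plan is to exploit the canonical projection $\pi\colon S^Y\rtimes T\to T$, $(f,t)\mapsto t$, which is a surjective homomorphism of monoids, together with the observation that $S^Y\rtimes U_T$ sits inside $S^Y\rtimes T$ as the complement of an ideal. First I would record the elementary fact that in a finite monoid a right‑invertible element is already invertible, so that the set $T\setminus U_T$ of non‑units is an ideal of $T$ (possibly empty, in which case $T$ is a group, $S^Y\rtimes U_T=S^Y\rtimes T$, and the asserted inequality is a trivial equality). Pulling this back along $\pi$, the set $J:=\pi^{-1}(T\setminus U_T)=\{(f,t)\in S^Y\rtimes T:\ t\notin U_T\}$ is an ideal of $S^Y\rtimes T$, and its complement is exactly $\pi^{-1}(U_T)=S^Y\rtimes U_T$, which is a submonoid because $U_T$ is a submonoid of $T$.

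Now fix a generating set $A$ of $S^Y\rtimes T$ with $|A|=\rank(S^Y\rtimes T)$ and write $A$ as the disjoint union $A=A_1\cup A_2$, where $A_1=A\cap(S^Y\rtimes U_T)$ and $A_2=A\cap J$. Arguing as in Lemma~\ref{minimal-generating-set}: every element of $S^Y\rtimes U_T$ is a product of elements of $A$, and such a product cannot involve any factor from $A_2$, since $J$ is an ideal and the product would then land in $J$, which is disjoint from $S^Y\rtimes U_T$. Hence $A_1$ already generates $S^Y\rtimes U_T$, so $|A_1|\ge\rank(S^Y\rtimes U_T)$.

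For the remaining factors I would apply $\pi$. Since $\pi(A)$ generates $T$ and $\pi(A_1)\subseteq U_T$, the set $U_T\cup\pi(A_2)$ generates $T$; adjoining to $\pi(A_2)$ a smallest generating set of the group $U_T$ (whose semigroup rank coincides with its group rank, by the convention on rank in Section~\ref{se:pre}) yields a generating set of $T$, whence $\rank(T)\le\rank(U_T)+|\pi(A_2)|\le\rank(U_T)+|A_2|$, i.e.\ $|A_2|\ge\rank(T)-\rank(U_T)$. Combining the two estimates gives $\rank(S^Y\rtimes T)=|A_1|+|A_2|\ge\rank(S^Y\rtimes U_T)+\rank(T)-\rank(U_T)$, as required.

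All of this is elementary, so I do not anticipate a genuine obstacle; the two places that need a little care are the verification that $T\setminus U_T$ is really an ideal (this is exactly where finiteness of $T$ enters) and the harmless slack $|\pi(A_2)|\le|A_2|$ — $\pi$ may identify distinct generators, but since we only need a lower bound on $|A_2|$ this causes no difficulty.
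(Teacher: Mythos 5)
Your argument is correct and is essentially the paper's own proof: both decompose $S^Y\rtimes T$ into the submonoid $S^Y\rtimes U_T$ and the ideal $S^Y\rtimes(T\setminus U_T)$, observe that any generating set must contain a generating set of the former, and count the remaining generators by projecting onto $T$. Your write-up merely makes explicit two details the paper leaves implicit (that $T\setminus U_T$ is an ideal by finiteness, and that $\pi$ may identify generators, which is harmless for a lower bound).
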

\begin{proof}
Let $S_1=S^Y \rtimes U_T$ and $S_2= S^Y \rtimes (T \smallsetminus
U_T)$. It is easy to check that 
$S^Y \rtimes T =S_1 \cup S_2$ is a partition into two subsemigroups.
 Because $S_2$ is an ideal  of $S^Y \rtimes T$  
, every generating set of $S^Y \rtimes T$ must
contain a generating set of $S_1$. Moreover, we need at least
$\rank(T)-\rank(U_T)$ elements
for generating $S_2$,   
since the set of  second components of the elements in any generating set of $S^Y \rtimes T$ is a generating set of $T$. Combining these two facts gives precisely the assertion of the lemma.
\end{proof}

\begin{lem}\label{rank-wreath-monoid-group}
If $(X,S)$ is a transformation monoid and $(Y,G)$ is  a permutation group then 
\begin{equation}\label{eq-rank-wreath-group}
\rank(S^Y \rtimes G) \geq |Y|(\rank(S)-\rank(U_S)) + \rank(U_S^Y \rtimes G).
\end{equation}
\end{lem}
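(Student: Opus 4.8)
The plan is to follow the template of Lemma~\ref{rank-wreath-monoids}: isolate the units, observe that their complement is an ideal, and then bound from below the number of generators that must lie outside the units. Write $M=S^Y\rtimes G$ and $U=U_S^Y\rtimes G$. First I would check that $U$ is exactly the group of units of $M$ and that $N:=M\setminus U$ is a two‑sided ideal of $M$. For the former, $(f,t)$ is invertible in $M$ iff $f(y)$ is invertible in $S$ for every $y\in Y$ (the $G$‑component is automatically invertible, and a one‑sided inverse in a finite monoid is a two‑sided inverse). For the latter, if in a product $(f_1,t_1)\cdots(f_k,t_k)=(f,t)$ some factor $(f_i,t_i)$ lies in $N$, say $f_i(y')\notin U_S$, then the coordinate of $f$ indexed by $y'(t_1\cdots t_{i-1})^{-1}$ is a product of elements of $S$ one of which is a non‑unit, hence a non‑unit; so $(f,t)\in N$, and $N$ absorbs products on both sides.

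Next, exactly as in Lemma~\ref{rank-wreath-monoids}, if $A$ is any generating set of $M$ then $A\cap U$ already generates $U$ (a factorisation over $A$ of a unit of $M$ can use no element of the ideal $N$), so $|A\cap U|\ge\rank(U_S^Y\rtimes G)$; moreover $A\cap N$ together with $U$ must still generate $M$. It therefore suffices to show that one needs at least $|Y|(\rank S-\rank U_S)$ elements of $N$ in order to generate $M$ modulo $U$, since then $|A|=|A\cap U|+|A\cap N|$ yields the stated inequality. For this I would work coordinate by coordinate: for a product $(f_1,t_1)\cdots(f_k,t_k)$ the value in coordinate $y$ is $f_1(y)\,f_2(yt_1)\cdots f_k(yt_1\cdots t_{k-1})$, so the elements of $S$ that can appear in coordinate $y$ of a product over $A\cup U$ are those generated by $U_S$ together with the values of the $S$‑components of the elements of $A$ at points of $Y$; since every element of $S$ occurs as some coordinate of some element of $M$, this forces the $S$‑components of $A$ to contribute, attached to the coordinate $y$, at least $\rank S-\rank U_S$ non‑units. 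Running this over all $y\in Y$ and combining the $|Y|$ local contributions should give $|A\cap N|\ge|Y|(\rank S-\rank U_S)$.

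The main obstacle is precisely this last, global combination step. Because of the semidirect‑product twisting, the value seen in coordinate $y$ of a product is assembled from the values of the generators at several different points of $Y$, so a single non‑unit generator — for instance one whose $S$‑component is non‑unit in many coordinates, or one that the permutation action carries from one coordinate to another — can feed several coordinates at once. The delicate point is to bound this sharing and show that, even after accounting for it, one still cannot serve all $|Y|$ coordinates with fewer than $|Y|(\rank S-\rank U_S)$ non‑unit generators; this is where the permutation action of $G$ on $Y$ has to be exploited carefully (e.g.\ via the fact that it permutes, and in particular preserves the cardinalities of, the non‑unit supports of the $S$‑components, which is the bookkeeping already implicit in the ideal argument of the first paragraph).
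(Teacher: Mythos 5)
Your first two paragraphs reproduce the paper's strategy exactly: split $S^Y\rtimes G$ into the group of units $U_S^Y\rtimes G$ and the ideal $(S^Y\setminus U_S^Y)\rtimes G$, note that any generating set $A$ must contain a generating set of the unit group, and reduce everything to showing that $A$ has at least $|Y|(\rank(S)-\rank(U_S))$ elements in the ideal. The genuine gap is the one you name yourself: that last count is never carried out. The paper closes it by a device different from your coordinate-by-coordinate sketch: it forms $A'=\{{}^{t}\!f : f\in A\pi_1,\ t\in G\}$ (all $G$-twists of the first projections, $\pi_1$ being the projection on the first component), checks that $A'$ generates $S^Y$ and that twists of units stay units, and then imports Wiegold's formula $\rank(S^Y)=\rank(U_S^Y)+|Y|(\rank(S)-\rank(U_S))$ for the rank of a direct power to force $|Y|(\rank(S)-\rank(U_S))$ non-units into $A'$, hence (it asserts) into $A\pi_1$. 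So the ingredient missing from your write-up is the appeal to the known rank of $S^Y$; without that, or some substitute, your local argument only controls one coordinate at a time.

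However, the ``sharing'' obstacle you isolate is not a technicality: it is exactly where the paper's own argument is thinnest, since one non-unit $f\in A\pi_1$ can contribute up to $|G|$ distinct non-units ${}^{t}\!f$ to $A'$, so a lower bound on $|A'\setminus U_S^Y|$ does not transfer to $A\pi_1$ by the stated reasoning. A two-coordinate test case shows the danger: let $S=\{1,0\}$ be the two-element monoid with zero (so one non-unit generator is needed besides the trivial unit group) and let $(Y,G)=(\{1,2\},S_2)$ with $\sigma$ the transposition. Then $((1,0),\sigma)$ and $((1,1),\sigma)$ generate $S^Y\rtimes G$, since $((1,1),\sigma)^2$ is the identity, $((1,0),\sigma)((1,1),\sigma)=((1,0),e)$, $((1,1),\sigma)((1,0),\sigma)=((0,1),e)$ and $((1,0),\sigma)^2=((0,0),e)$; yet the right-hand side of \eqref{eq-rank-wreath-group} evaluates to $2\cdot 1+1=3$ under the reading of the rank conventions that makes Wiegold's formula come out correctly. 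So you were right to flag the global combination step as the crux: it is not completed in your proposal, and exploiting the $G$-action more carefully cannot rescue it in the generality stated.
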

\begin{proof}
It is easy to check that  
 $$ S^Y \rtimes G=( (S^Y \setminus U_S^Y) \rtimes G )\cup (U_S^Y \rtimes G),$$
is a partition into two subsemigroups of $ S^Y \rtimes G$. Because $(S^Y \setminus U_S^Y) \rtimes G $ is an ideal, every generating set of $S^Y \rtimes G$ must contain a generating set of $U_S^Y \rtimes G$. To complete the proof, it is enough to show that every generating set of $S^Y \rtimes G$ has at least $|Y|(\rank(S)-\rank(U_S))$ elements in $(S^Y \setminus U_S^Y) \rtimes G$. Let $A$ be a generating set of $S^Y \rtimes G$. One can easily check  that, denoting by $\pi_1$ the projection on the first component,
$$ A'=\{ ^t\!f : f \in A\pi_1 , t\in G \}$$ is a generating set of $S^Y$. The equality  
$$ \rank(S^Y)=\rank(U_S^Y)+|Y|(\rank(S)-\rank(U_S))$$  has been proved in \cite[Theorem 1]{Wiegold:1987}. Hence, $A'$ has at least 
$$|Y|(\rank(S)-\rank(U_S))$$ elements in $S^Y\setminus U_S^Y$. On the other hand, if $f$ belongs to $U_S^Y$ and $t$ belongs to $G$ then $ ^t\!f \in U_S^Y$. Therefore, $A\pi_1$ must contain at least $$|Y|(\rank(S)-\rank(U_S))$$ elements in $S^Y\setminus U_S^Y$. This implies   that  $A$ has at least $$|Y|(\rank(S)-\rank(U_S))$$ elements in $(S^Y\setminus U_S^Y )\rtimes G$ and the proof is complete.   
\end{proof}

\begin{prop} \label{prop-rank-lower-bound}
Let $(X,S)$ and $(Y,T)$ be two transformation monoids. Then, the rank of 
$S^Y \rtimes T$ is greater than or equal to
\begin{equation}\label{eq-rank-wreath-general-1}
\rank(U_S^Y \rtimes U_T)+|Y|(\rank(S)-\rank(U_S))+\rank(T)-\rank(U_T).
\end{equation}
\end{prop}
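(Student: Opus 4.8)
The plan is to derive the bound by composing the two immediately preceding lemmas, with essentially no new argument required beyond a small faithfulness check. First I would apply Lemma~\ref{rank-wreath-monoids} to the transformation monoids $(X,S)$ and $(Y,T)$ exactly as they stand, obtaining
$$\rank(S^Y \rtimes T) \geq \rank(S^Y \rtimes U_T) + \rank(T) - \rank(U_T).$$

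The next step is to bound $\rank(S^Y \rtimes U_T)$ using Lemma~\ref{rank-wreath-monoid-group}. For this I need to know that $(Y,U_T)$ is a permutation group: since $T$ acts faithfully on $Y$ and $U_T$ is a subsemigroup of $T$, the restriction of the action to $U_T$ is again faithful, and $U_T$ is a group, so $(Y,U_T)$ is indeed a transformation (permutation) group in the sense required. Thus Lemma~\ref{rank-wreath-monoid-group}, applied with $(Y,G)=(Y,U_T)$ and with $S$ playing the role of $S$, gives
$$\rank(S^Y \rtimes U_T) \geq |Y|(\rank(S)-\rank(U_S)) + \rank(U_S^Y \rtimes U_T).$$

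Substituting this into the first inequality yields exactly
$$\rank(S^Y \rtimes T) \geq \rank(U_S^Y \rtimes U_T)+|Y|(\rank(S)-\rank(U_S))+\rank(T)-\rank(U_T),$$
which is the claimed lower bound, so the proof would be complete.

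I do not anticipate a genuine obstacle here: the statement is a formal consequence of the two lemmas, and the only point that needs a moment's care is verifying that the group of units $U_T$ still acts faithfully on $Y$ so that Lemma~\ref{rank-wreath-monoid-group} is applicable. The real content lives inside those lemmas — in particular Lemma~\ref{rank-wreath-monoid-group} rests on Wiegold's formula $\rank(S^Y)=\rank(U_S^Y)+|Y|(\rank(S)-\rank(U_S))$ — so in the proposition itself it only remains to assemble the pieces in the correct order. If one preferred, the same bound could in principle be obtained in one pass by partitioning $S^Y \rtimes T$ simultaneously along the non-units of $T$ and the non-units of $S$ in each coordinate and counting generators forced into each ideal layer, but the two-step route above is cleaner and reuses Lemmas~\ref{rank-wreath-monoids} and~\ref{rank-wreath-monoid-group} verbatim.
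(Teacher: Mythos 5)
Your proof is correct and follows exactly the paper's route: the paper's own proof is just the one-line remark that the result follows by combining Lemmas \ref{rank-wreath-monoids} and \ref{rank-wreath-monoid-group}, which is precisely the chaining you carry out. Your extra observation that $U_T$ still acts faithfully on $Y$ (so that $(Y,U_T)$ is a permutation group and Lemma \ref{rank-wreath-monoid-group} applies) is a worthwhile detail the paper leaves implicit.
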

\begin{proof}
This is straightforward using Lemmas  \ref{rank-wreath-monoids} and \ref{rank-wreath-monoid-group} .
\end{proof}

\begin{prop}\label{generating-set-minimum-size}
Let $(X,S)$ and $(Y,T)$ be two transformation monoids. Let $A'$, $A$ and $B$ be generating sets of minimum size of $U_S^Y \rtimes U_T$, $S$, and $T$, respectively. The set   
$$ C= A' \cup \{((a)_y,1): a \in A\setminus U_S , y \in Y \} \cup \{(\bar{1},b) : b\in B\setminus U_T\}$$  is a generating set of $S^Y \rtimes T$ with minimum size. Consequently, the rank of 
$S^Y \rtimes T$ is equal to
\begin{equation}\label{eq-rank-wreath-general-2}
\rank(U_S^Y \rtimes U_T)+|Y|(\rank(S)-\rank(U_S))+\rank(T)-\rank(U_T).
\end{equation}
\end{prop}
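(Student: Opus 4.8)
The plan is to prove the two assertions of the proposition simultaneously: first that $\langle C\rangle=S^Y\rtimes T$, and second that $|C|$ is no larger than the lower bound for $\rank(S^Y\rtimes T)$ furnished by Proposition~\ref{prop-rank-lower-bound}. Since that lower bound is the expression \eqref{eq-rank-wreath-general-1}, which coincides with \eqref{eq-rank-wreath-general-2}, these two facts force $|C|=\rank(S^Y\rtimes T)$, and then $C$ has minimum size and the rank formula follows.

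For the cardinality bound I would argue as follows. We have $|A'|=\rank(U_S^Y\rtimes U_T)$. The functions $(a)_y$ with $a\in A\setminus U_S$ and $y\in Y$ are pairwise distinct (two of them agree only if the base points coincide and then also the values $a$ coincide), so the middle block of $C$ has at most $|Y|\cdot|A\setminus U_S|$ elements; moreover $|A\setminus U_S|\le\rank(S)-\rank(U_S)$, because a minimum generating set of $S$ meets $U_S$ in at least $\rank(U_S)$ elements — any unit written as a product of generators uses only unit generators, as $S\setminus U_S$ is an ideal. Likewise the last block of $C$ has at most $\rank(T)-\rank(U_T)$ elements. The three blocks are pairwise disjoint: members of $A'$ have first coordinate in $U_S^Y$ and second in $U_T$; members of the middle block have first coordinate outside $U_S^Y$; members of the last block have second coordinate outside $U_T$. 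Summing gives $|C|\le$ \eqref{eq-rank-wreath-general-1}.

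The generation claim I would establish by a short chain of inclusions inside $\langle C\rangle$, exploiting the fact that whenever the second coordinate is the identity of $T$ the twisting action ${}^{t}(\cdot)$ is trivial (and that ${}^{t}\bar 1=\bar 1$ for all $t\in T$). Step~1: $\langle C\rangle\supseteq U_S^Y\rtimes U_T$, since $A'\subseteq C$. Step~2: $((a)_y,1)\in\langle C\rangle$ for every $a\in A$ and $y\in Y$ — this lies in $C$ when $a\notin U_S$, and lies in $U_S^Y\rtimes U_T$ when $a\in U_S$, since then $(a)_y\in U_S^Y$. Step~3: because the twist is trivial, $((a_1)_y,1)\cdots((a_n)_y,1)=((a_1\cdots a_n)_y,1)$, so $A$ generating $S$ gives $((s)_y,1)\in\langle C\rangle$ for all $s\in S$ and $y\in Y$. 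Step~4: again by triviality of the twist, $\prod_{y\in Y}((yf)_y,1)=(f,1)$ for every $f\in S^Y$, whence $S^Y\times\{1\}\subseteq\langle C\rangle$. Step~5: $(\bar 1,b)\in\langle C\rangle$ for each $b\in B$ (it lies in $C$ when $b\notin U_T$ and in $U_S^Y\rtimes U_T$ when $b\in U_T$), and since ${}^{t}\bar 1=\bar 1$ their products recover $\{\bar 1\}\times T\subseteq\langle C\rangle$ as $B$ generates $T$. Finally, $(f,1)(\bar 1,t)=(f\cdot{}^{1}\bar 1,t)=(f,t)$, so every $(f,t)\in S^Y\rtimes T$ lies in $\langle C\rangle$. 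Combining the two paragraphs: $C$ generates, so $|C|\ge\rank(S^Y\rtimes T)\ge$ \eqref{eq-rank-wreath-general-1} by Proposition~\ref{prop-rank-lower-bound}, while $|C|\le$ \eqref{eq-rank-wreath-general-1}; all these quantities coincide, which is the assertion.

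I do not expect a serious obstacle here; the only real care needed is in the semidirect-product bookkeeping — correctly tracking the action ${}^{t}f$, and distinguishing \emph{unit} from \emph{non-unit} in the first coordinate and in the second coordinate — so that the disjointness count in the second paragraph and the ``already a unit'' reductions in Steps~2 and~5 are watertight.
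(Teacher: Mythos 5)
Your proposal is correct and follows essentially the same route as the paper: the same factorization $(f,t)=\prod_{y\in Y}((yf)_y,1)\prod_i(\bar 1,b_i)$ with each factor reduced to $C$ or to $U_S^Y\rtimes U_T$, combined with the lower bound of Proposition \ref{prop-rank-lower-bound} to get minimality. Your second paragraph (disjointness of the three blocks and $|A\setminus U_S|\le\rank(S)-\rank(U_S)$ via the ideal $S\setminus U_S$) makes explicit a cardinality count that the paper leaves implicit, which is a welcome addition but not a different method.
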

\begin{proof}
First, we show that 
$C$ is a generating set. Consider a pair $$(f,t) \in S^Y \rtimes T.$$ Because $B$ is a generating set of $T$, there exist $b_1,b_2,\ldots, b_k \in B$ such that $t=b_1b_2\ldots b_k$. This leads to the following factorization:  
\begin{equation}\label{factorization-2}
(f,t)=(f,1)(\bar{1},t)= \prod_{y \in Y} ((yf)_y,1) \prod_{i=1}^k (\bar{1},b_i).   
 \end{equation}
Because $A$ is a generating set of $S$ and $yf \in S$, for every $y \in Y$ there exist $a_{y1},a_{y2},\ldots, a_{yk_y} \in A$ such that $$yf= \prod_{i=1}^{k_y}a_{yi}.$$ Accordingly, we obtain the factorization 
\begin{equation}\label{factorization-3}
((yf)_y,1)=\prod_{i=1}^{k_y} ((a_{yi})_y,1).  
\end{equation}
Consider the pair $((a_{yi})_y,1)$ in \eqref{factorization-3}. If $a_{yi} \in U_S$ then $((a_{yi})_y,1) \in U_S^Y \rtimes U_T$ can be factorized into elements of $A'$; otherwise, $((a_{yi})_y,1) \in C$. This shows that the first product in  \eqref{factorization-2} can be rewritten in terms of elements of $C$. Now consider the pair $(\bar{1},b_i)$  in the second product in \eqref{factorization-2}. If $b_i \in U_T$ then $ (\bar{1},b_i) \in U_S^Y \rtimes U_T$ can be factorized into elements of  $A'$; otherwise, $(\bar{1},b_i) \in C$. This shows that the second product in \eqref{factorization-2} can be rewritten in terms of elements of $C$. Thus, $(f,t)$ can be factorized into elements of  $C$, whence $C$ is a generating set of $S^Y \rtimes T$,
which is the desired conclusion. Now, according to Proposition \ref{prop-rank-lower-bound}, the size of $C$ is equal to $\rank(S^Y \rtimes T)$.
\end{proof}

\begin{notation}  
 For a finite group $G$ denote by $\diam_{min}(G)$ \index[notation]{$\diam_{min}(G)$} the minimum of $\diam(G,A)$ over all generating sets of minimum size. 
\end{notation}

\begin{theorem}\label{th:upper-bound-general}
Given two transformation monoids $(X,S)$ and $(Y,T)$, there exist integers $0 \leq m_1 < N(S)$ and $0 \leq m_2 < N(T)$ such that 
\begin{align} \label{upper-bound-general}
 N(S^Y \rtimes T)
 \leq &(m_1+m_2) \diam_{min}(U_S^Y \rtimes U_T) \nonumber \\
 +& |Y|(N(S)-m_1)+N(T)-m_2.
\end{align}
\end{theorem}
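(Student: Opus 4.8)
The plan is to take for $S^Y\rtimes T$ the generating set of minimum size furnished by Proposition \ref{generating-set-minimum-size}, and then exhibit one explicit element of $\ker(S^Y\rtimes T)$ as a short word in it. First I would fix a generating set $A$ of $S$ of minimum size with $N(S,A)=N(S)$, a generating set $B$ of $T$ of minimum size with $N(T,B)=N(T)$, and a generating set $A'$ of the group $U_S^Y\rtimes U_T$ of minimum size with $\diam(U_S^Y\rtimes U_T,A')=\diam_{min}(U_S^Y\rtimes U_T)$. By Proposition \ref{generating-set-minimum-size},
$$C=A'\cup\{((a)_y,1):a\in A\setminus U_S,\ y\in Y\}\cup\{(\bar 1,b):b\in B\setminus U_T\}$$
is a generating set of $S^Y\rtimes T$ of minimum size, so $N(S^Y\rtimes T)\le N(S^Y\rtimes T,C)$ and it suffices to bound the $C$-length of a suitable element of $\ker(S^Y\rtimes T)$.

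Next I would choose shortest words reaching the kernels of the factors: $s:=a_1a_2\cdots a_{N(S)}\in\ker(S)$ with all $a_i\in A$, and $t:=b_1b_2\cdots b_{N(T)}\in\ker(T)$ with all $b_j\in B$. Let $m_1$ be the number of $i$ with $a_i\in U_S$ and $m_2$ the number of $j$ with $b_j\in U_T$; clearly $m_1,m_2\ge 0$. Not every $a_i$ can be a unit, for then $s$ would be a unit, whereas $\ker(S)\cap U_S=\varnothing$ (since $N(S)\ge 1$ forces $S$ not to be a group, and a unit in $\ker(S)$ would force $1\in\ker(S)$, hence $\ker(S)=S$); thus $m_1<N(S)$, and likewise $m_2<N(T)$. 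The element I would aim at is $(\bar s,t)$: since $\bar s$ is the constant map with value $s\in\ker(S)$ and $t\in\ker(T)$, Lemma \ref{minimum ideal} gives $(\bar s,t)\in E\subseteq\ker(S^Y\rtimes T)$.

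The decisive point is to factor $(\bar s,t)$ over $C$ in the right order. From the semidirect-product identities ${}^{1}g=g$, ${}^{b}\bar 1=\bar 1$, $\overline{u}\,\overline{v}=\overline{uv}$ and $\prod_{y\in Y}(u)_y=\bar u$ (all immediate from the definitions), one has
$$(\bar s,t)=\Big(\prod_{i=1}^{N(S)}(\bar{a_i},1)\Big)\Big(\prod_{j=1}^{N(T)}(\bar 1,b_j)\Big).$$
For each $i$ with $a_i\in U_S$ the factor $(\bar{a_i},1)$ lies in $U_S^Y\rtimes U_T$, hence is a word in $A'\subseteq C$ of length at most $\diam_{min}(U_S^Y\rtimes U_T)$; for each $i$ with $a_i\notin U_S$ one has $(\bar{a_i},1)=\prod_{y\in Y}((a_i)_y,1)$, a word in $C$ of length $|Y|$. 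Likewise $(\bar 1,b_j)\in U_S^Y\rtimes U_T$ has $C$-length at most $\diam_{min}(U_S^Y\rtimes U_T)$ when $b_j\in U_T$, and $C$-length $1$ when $b_j\notin U_T$. Adding the contributions of all $N(S)+N(T)$ factors gives
$$l_C((\bar s,t))\le (m_1+m_2)\,\diam_{min}(U_S^Y\rtimes U_T)+|Y|\big(N(S)-m_1\big)+N(T)-m_2,$$
which is exactly the asserted bound.

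The one step that takes genuine thought rather than bookkeeping is the order of this factorization. If one instead formed $\prod_{y\in Y}((s)_y,1)$ and expanded each $(s)_y$ over the $a_i$, the $\diam_{min}$-contributions of the unit factors would sit inside the $|Y|$-fold product and one would only obtain the weaker bound with $|Y|m_1\diam_{min}(U_S^Y\rtimes U_T)$ in place of $m_1\diam_{min}(U_S^Y\rtimes U_T)$; splitting $\bar s=\bar{a_1}\cdots\bar{a_{N(S)}}$ first and only then resolving each individual $\bar{a_i}$ — over $Y$ when $a_i\notin U_S$, inside the group of units when $a_i\in U_S$ — is what keeps the unit factors out of the $|Y|$-multiplication. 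Everything else (that $C$ is of minimum size, that $(\bar s,t)\in E$, and the displayed semidirect-product identities) is routine; the degenerate cases where $S$ or $T$ is a group, in which no $m_i$ with $m_i<N(\cdot)=0$ exists, are implicitly excluded by the statement.
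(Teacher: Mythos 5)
Your proposal is correct and follows essentially the same route as the paper: the same generating set $C$ from Proposition \ref{generating-set-minimum-size}, the same target element $(\bar s,t)\in E$ via Lemma \ref{minimum ideal}, and the same factorization $\prod_i(\bar{a_i},1)\prod_j(\bar 1,b_j)$ with the identical case split on unit versus non-unit factors. Your explicit justification that $m_1<N(S)$ and $m_2<N(T)$ is a small addition the paper leaves implicit, but it does not change the argument.
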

\begin{proof}
Let $A$ and $B$ be generating sets of minimum size of $S$ and $T$, respectively,
such that $N(S,A)=N(S)$ and $N(T,B)=N(T)$. There exist $a_1,a_2,\ldots,a_{N(S)}
\in A$ and $b_1,b_2,\ldots,b_{N(T)}\in B$ such that $a_1a_2\ldots
a_{N(S)} \in \ker(S)$ and $b_1b_2\ldots b_{N(T)} \in \ker(T).$ Denote by $m_1$ and $m_2$ the number of invertible factors in the words $a_1a_2\ldots
a_{N(S)}$ and $b_1b_2\ldots b_{N(T)}$, respectively. 
Define the function $f$ from $Y$ to $\ker(S)$ to be the constant map with
image $a_1a_2\ldots a_{N(S)}$. By Lemma 
\ref{minimum ideal}, the pair $(f,b_1b_2\ldots b_{N(T)})$ is an element of the minimum ideal  of $S^Y \rtimes T$.
 
Let $A'$ be a generating set of $U_S^Y \rtimes U_T$ of minimum size such that $\diam(U_S^Y \rtimes U_T,A')=\diam_{min}(U_S^Y \rtimes U_T)$. By Proposition \ref{generating-set-minimum-size}, the set 
$$ C= A' \cup \{((a)_y,1): a \in A\setminus U_S , y \in Y \} \cup \{(\bar{1},b) : b\in B\setminus U_T\}$$  is a generating set of $S^Y \rtimes T$ of minimum size. To establish the inequality \eqref{upper-bound-general}, it is enough to show that the pair $(f,b_1b_2\ldots b_{N(T)})$ is a product of at most 
$$(m_1+m_2) \diam_{min}(U_S^Y \rtimes U_T) + |Y|(N(S)-m_1)+N(T)-m_2$$
 elements of $C$.
We have 
\begin{equation}\label{first-theorem-factorization1}
(f,b_1 b_2 \ldots b_{N(T)})=(f,1)(\bar{1},b_1b_2\ldots b_{N(T)})
= \prod_{i=1}^{N(S)}(\bar{a_i},1)\prod_{i=1}^{N(T)} (\bar{1},b_i).
\end{equation}
Consider the pair $(\bar{a_i},1)$ in the first product of \eqref{first-theorem-factorization1}.
If $ a_i \in A \setminus U_S $, then $$(\bar{a_i},1)=\prod_{y \in Y}({(a_i)}_y,1),$$ which is a product of $|Y|$ elements in $$\{((a)_y,1): a \in A\setminus U_S , y \in Y \}.$$ If  $a_i \in U_S$, then $ (\bar{a_i},1)$ can be written as a product of at most $\diam_{min}(U_S^Y\rtimes U_T)$ elements in $A'$. Accordingly, the first product in \eqref{first-theorem-factorization1} can be rewritten as a product of at most $$|Y|(N(S)-m_1)+m_1 \diam_{min}(U_S^Y \rtimes U_T)$$ elements in $C$.
Now consider the factor $(\bar{1},b_i)$ of the second product in \eqref{first-theorem-factorization1}. If $b_i \in B \setminus U_T$ then  $(\bar{1},b_i) \in C$; otherwise,  $(\bar{1},b_i) \in U_S^Y \rtimes U_T$ can be written as a product of at most $\diam_{min}(U_S^Y \rtimes U_T)$ elements in $A'$. Thus, the second product in \eqref{first-theorem-factorization1} can be rewritten as a product of at most $$N(T)-m_2 + m_2 \diam_{min}(U_S^Y \rtimes U_T)$$ elements in $C$. Combining these two facts shows that $(f,b_1b_2 \ldots b_{N(T)})$ can be written as a product of at most 
$$(m_1+m_2) \diam_{min}(U_S^Y \rtimes U_T) + |Y|(N(S)-m_1)+N(T)-m_2$$
 elements in $C$, which proves the theorem. 
\end{proof}
In the rest of this section we study some special cases.
\begin{theorem}\label{th:upper-bound-special}
Given two transformation monoids $(X,S)$ and $(Y,T)$, suppose that $T\neq \{1\}$ has trivial group of units and $|Y|=n$. Then the following inequality holds:
\begin{equation} \label{uper-bound-N1}
 N(S^Y \rtimes T) \leq \max \{n,\diam(U_S^Y,A' ) \} N(S)+ N(T),
\end{equation}
where $A'$ is a generating set of $U_S^Y$ with minimum size.
Furthermore, if $\rank(U_S^k)=k \, \rank(U_S) \mbox{ for } k \geq 1$,  then
\begin{equation} \label{upper-bound-N2}
 N(S^Y \rtimes T) \leq n N(S)+ N(T).
\end{equation}
 \end{theorem}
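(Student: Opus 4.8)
The plan is to produce, via Proposition \ref{generating-set-minimum-size}, an explicit generating set $C$ of $S^Y\rtimes T$ of minimum size, to exhibit via Lemma \ref{minimum ideal} an explicit element of $\ker(S^Y\rtimes T)$, and to estimate its $C$-length. Throughout we use that $U_T=\{1\}$, so that $U_S^Y\rtimes U_T$ is just the group $U_S^Y$, and that a minimal generating set $B$ of the monoid $T\neq\{1\}$ never contains the identity, so $B\setminus U_T=B$. Fix minimal generating sets $A$ of $S$ and $B$ of $T$ of minimum size with $N(S,A)=N(S)$ and $N(T,B)=N(T)$, and choose $a_1,\dots,a_{N(S)}\in A$, $b_1,\dots,b_{N(T)}\in B$ with $s_0:=a_1\cdots a_{N(S)}\in\ker(S)$ and $u:=b_1\cdots b_{N(T)}\in\ker(T)$. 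Writing $\bar{s_0}\colon Y\to S$ for the constant map with value $s_0$, Lemma \ref{minimum ideal} gives $(\bar{s_0},u)\in\ker(S^Y\rtimes T)$, and in $S^Y\rtimes T$ one has
\begin{equation*}
(\bar{s_0},u)=\prod_{i=1}^{N(S)}(\bar{a_i},1)\;\prod_{i=1}^{N(T)}(\bar{1},b_i),\qquad (\bar{a_i},1)=\prod_{y\in Y}\bigl((a_i)_y,1\bigr),
\end{equation*}
both equalities being immediate from the definition of the semidirect product.

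For the first inequality, let $A'$ be the given generating set of $U_S^Y$ of minimum size and put $C=A'\cup\{((a)_y,1):a\in A\setminus U_S,\ y\in Y\}\cup\{(\bar1,b):b\in B\}$, which is a generating set of $S^Y\rtimes T$ of minimum size by Proposition \ref{generating-set-minimum-size}. In the factorisation above, if $a_i\notin U_S$ then $((a_i)_y,1)\in C$ for all $y$, so $l_C\bigl((\bar{a_i},1)\bigr)\le n$; if $a_i\in U_S$ then $(\bar{a_i},1)$ lies in the submonoid $U_S^Y\rtimes U_T$ generated by $A'$, so $l_C\bigl((\bar{a_i},1)\bigr)\le\diam(U_S^Y,A')$; and every $(\bar1,b_i)$ lies in $C$. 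Letting $m_1$ be the number of indices $i$ with $a_i\in U_S$ (so $0\le m_1\le N(S)$) we obtain
\begin{equation*}
l_C\bigl((\bar{s_0},u)\bigr)\le n\bigl(N(S)-m_1\bigr)+m_1\diam(U_S^Y,A')+N(T)=nN(S)+N(T)+m_1\bigl(\diam(U_S^Y,A')-n\bigr).
\end{equation*}
When $\diam(U_S^Y,A')\le n$ the last summand is $\le0$ and this is $nN(S)+N(T)$; when $\diam(U_S^Y,A')>n$ we use $m_1\le N(S)$ to bound it by $N(S)\diam(U_S^Y,A')+N(T)$. In both cases the right-hand side is $\max\{n,\diam(U_S^Y,A')\}N(S)+N(T)$, and since $N(S^Y\rtimes T)\le l_C\bigl((\bar{s_0},u)\bigr)$ this proves \eqref{uper-bound-N1}. (This is essentially the $U_T=\{1\}$ case of Theorem \ref{th:upper-bound-general}, where $m_2=0$ because $1\notin B$.)

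For the second inequality, assume $\rank(U_S^k)=k\rank(U_S)$ for all $k\ge1$, and set $A_U:=A\cap U_S$. Since $S\setminus U_S$ is an ideal of $S$, the set $A_U$ generates $U_S$; comparing $|A|=\rank(S)$ with the rank of $S$ relative to that ideal forces $|A_U|=\rank(U_S)$, so $A_U$ is a minimum-size generating set of $U_S$. Then $A':=\{((a)_y,1):a\in A_U,\ y\in Y\}$ generates $U_S^Y$ (for each $y$ the elements $((a)_y,1)$ with $a\in A_U$ generate the $y$-th coordinate copy of $U_S$) and has $|Y|\rank(U_S)=\rank(U_S^Y)$ elements by hypothesis, hence is of minimum size. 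For this choice Proposition \ref{generating-set-minimum-size} yields the minimum-size generating set
\begin{equation*}
C=\{((a)_y,1):a\in A,\ y\in Y\}\cup\{(\bar1,b):b\in B\}
\end{equation*}
of $S^Y\rtimes T$, because $A'$ together with $\{((a)_y,1):a\in A\setminus U_S,\ y\in Y\}$ is exactly $\{((a)_y,1):a\in A,\ y\in Y\}$. Now in the factorisation $((a_i)_y,1)\in C$ for every index $i$, including the ones with $a_i\in U_S$, so $l_C\bigl((\bar{a_i},1)\bigr)\le n$ for all $i$; consequently $N(S^Y\rtimes T)\le l_C\bigl((\bar{s_0},u)\bigr)\le nN(S)+N(T)$, which is \eqref{upper-bound-N2}.

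The two wreath-product identities in the first display and the case split are routine. The points that need care are the identification $U_S^Y\rtimes U_T=U_S^Y$ and its effect on the shape of $C$ and on $\diam$, and --- for the second statement --- the claim that $A\cap U_S$ is a minimum-size generating set of $U_S$, which rests on $S\setminus U_S$ being an ideal together with the additivity of ranks relative to it (the same mechanism as in Lemmas \ref{rank-wreath-monoids}--\ref{rank-wreath-monoid-group}). The power-rank hypothesis is then precisely what certifies that the coordinatewise set $A'$ has minimum size, and it is this that collapses the $\diam$ factor of the first inequality down to $n$.
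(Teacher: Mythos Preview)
Your proof is correct and follows essentially the same route as the paper's: the same element $(\bar{s_0},u)$ in the kernel via Lemma~\ref{minimum ideal}, the same generating set from Proposition~\ref{generating-set-minimum-size}, and the same factorisation and case split on whether $a_i\in U_S$. Your treatment of the second inequality is slightly more explicit than the paper's---you argue that $A\cap U_S$ is a minimum-size generating set of $U_S$ and build $A'$ coordinatewise from it, whereas the paper simply writes down $C''=\{((a)_y,1):a\in A,\ y\in Y\}\cup\{(\bar 1,b):b\in B\setminus\{1\}\}$ and checks its cardinality against formula~\eqref{eq-rank-wreath-general-2}; but these amount to the same thing.
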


\begin{proof}
Let $A$ and $B$ be two generating sets of minimum size of $S$ and $T$, respectively,
such that $N(S,A)=N(S)$ and $N(T,B)=N(T)$. There exist $a_1,a_2,\ldots,a_{N(S)}
\in A$ and $b_1,b_2,\ldots,b_{N(T)}\in B\setminus \{1\}$ such that $$a_1a_2\ldots
a_{N(S)} \in \ker(S)$$ and $$b_1b_2\ldots b_{N(T)} \in \ker(T).$$ Define
the function $f$ from $Y$ to $\ker(S)$ to be the constant map with
image $a_1a_2\ldots a_{N(S)}$. By Lemma
\ref{minimum ideal}, the pair $(f,b_1b_2\ldots b_{N(T)})$ is an element of the minimum ideal  of $S^Y \rtimes T$. 
Let $A'$ be a generating set of $U_S^Y$ with minimum size. By Proposition \ref{generating-set-minimum-size}, the set 
$$ C'=(A' \times \{1\})  \cup \{((a)_y,1): a \in A\setminus U_S , y \in Y \} \cup (\{\bar{1}\} \times B\setminus \{1\})$$  is a generating set of $S^Y \rtimes T$ with minimum size. To establish the inequality \eqref{uper-bound-N1}, it is enough to show that the pair $(f,b_1b_2\ldots b_{N(T)})$ is a product of at most 
$$\max \{n,\diam(U_S^Y ,A') \} N(S)+ N(T)$$ elements of $C'$.
We have 
\begin{equation}\label{second-theorem-factorization1}
(f,b_1 b_2 \ldots b_{N(T)})=(f,1)(\bar{1},b_1b_2\ldots b_{N(T)})
= \prod_{i=1}^{N(S)}(\bar{a_i},1)\prod_{i=1}^{N(T)} (\bar{1},b_i).
\end{equation}
For $i=1,2,\ldots,N(T)$, the pair $(\bar{1},b_i)$ belongs to $C'$. Consider next the pairs $(\bar{a_j},1)$ with $$j=1,2,\ldots,N(S).$$
If $ a_j \in A \setminus U_S $, then $(\bar{a_j},1)=\prod_{y \in Y}({(a_j)}_y,1)$, which is a product of $n$ elements in $$\{((a)_y,1): a \in A\setminus U_S , y \in Y \}.$$ If  $a_j \in U_S$, then $ (\bar{a_j},1)$ can be written as a product of at most $\diam(U_S^Y,A')$ elements in $\{(g,1) : g\in A'\}$. 
Therefore, the product on the
rightmost side of \eqref{second-theorem-factorization1} can be rewritten as a product of at most 
$$\max\{n,\diam(U_S^Y,A')\} N(S) + N(T)$$ elements in $C'$ as we required. 

Consider the case where $\rank(U_S^Y)= |Y| \, \rank(U_S)$. By Proposition \ref{generating-set-minimum-size}, the set 
$$ C''= \{((a)_y,1): a \in A , y \in Y \} \cup \{(\bar{1},b) : b\in B\setminus \{1\}\}$$ 
is a generating set of $S^Y \rtimes T$ of minimum size. More precisely, since $U_T$ is trivial and $\rank(U_S^Y)= |Y| \, \rank(U_S)$, substituting $\rank(U_S^Y \rtimes U_T)$ by $ |Y| \, \rank(U_S)$ in formula \eqref{eq-rank-wreath-general-2} in Proposition \ref{generating-set-minimum-size}, gives $ |Y| \rank(S) + \rank(T)$ which is equal to $|C''|$.  We can factorize the pair $(f,b_1b_2 \ldots b_{N(T)})$ in $n N(S) + N(T) $ elements of $C''$ as follows:
\begin{equation} \label{factorization2}
(f,b_1 b_2 \ldots b_{N(T)})=(f,1)(\bar{1},b_1b_2\ldots b_{N(T)})
= \prod_{y \in Y}\prod_{i=1}^{N(S)}((a_i)_y,1)\prod_{i=1}^{N(T)} (\bar{1},b_i).
\end{equation}
This establishes the inequality \eqref{upper-bound-N2} and completes the proof of the theorem.
\end{proof}
 \section{Final remarks}

We collect here several of questions which remain open:

\begin{question}
In Lemma \ref{completely-regular} we have just found an upper bound for $M'(S)$ where $S$ is a completely regular semigroup. When does
 equality hold? What may we say for the other depth parameters? 
\end{question}

\begin{question}
 Theorem \ref{lower-bound-N'} gives a lower bound for $N'(S)$ where $S$ is a finite transformation semigroup. Similarly, it would be nice to find an upper bound for $M´(S)$ where $S$ is a finite transformation semigroup. 
\end{question}

\begin{question}
In Corollary \ref{N-transformation-cor1} the parameters $N$ and $N'$ are computed for the transformation semigroups $T_n, PT_n$ and $I_n$. What can we  say about $M,M'$ for them?
\end{question}

\begin{question}
The equalities $N = N'$ and $M = M'$ hold in all the semigroups which we have verified. Is there any example of a semigroup for which $N' < N$ and $M < M'$? 
\end{question}

\begin{question}
In Section \ref{se:depth}, we estimate the depth parameters for the families of transformation semigroups whose rank has been determined already in the literature. Other natural candidates that may be easy to verify are the semigroups $SP_n,SPO_n$ or semigroups of orientation preserving transformations such as $POP_n, OP_n$ or $ POPI_n$.
\end{question}

\begin{question}
We have established upper bounds for $N(S)$ where $S$ is a direct product or wreath product of two finite monoids. It would be interesting to obtain analogous results for the
other depth parameters.
\end{question}

\begin{question}
Give examples to show that the inequalities in Theorems \ref{th:upper-bound-direct-product}, \ref{th:upper-bound-general} and \ref{th:upper-bound-special} may not be improved. 
\end{question}

\section{Acknowledgments}
This is part of the author's Ph.D. thesis, written under the supervision of Professors Jorge Almeida and Pedro Silva at the University of Porto with the financial support from FCT (Funda\c{c}\~ao para a Ci\^encia e a Tecnologia) with reference  SFRH /BD/51170 /2010. The author wishes to express her thanks to her supervisors for suggesting the problem and for many stimulating conversations.  

\bibliographystyle{amsplain}
\bibliography{nasimref}

\end{document}